\theoremstyle{plain}
\newtheorem{FactCounter}{dummy}[section]
\newtheorem{Theorem}[FactCounter]{Theorem} %
\newtheorem{Proposition}[FactCounter]{Proposition} %
\newtheorem{Lemma}[FactCounter]{Lemma} %
\newtheorem{Corollary}[FactCounter]{Corollary} %
\theoremstyle{definition}
\newtheorem{Definition}[FactCounter]{Definition} %
\theoremstyle{remark}
\newtheorem{Remark}[FactCounter]{Remark} %
\newtheorem{Example}[FactCounter]{Example} %
\newcommand\operator[1]{\mathop{\operatorname{#1}}\nolimits}
\newcommand{\id}{\operator{id}}
\newcommand{\dom}{\operator{dom}}
\newcommand{\im}{\operator{im}}
\newcommand{\source}{\textbf{s}}
\newcommand{\range}{\textbf{r}}
\newcommand{\HH}{{\mathcal{H}}}
\newcommand{\CC}{{\mathcal{C}}}
\newcommand{\muRelation}{\mathrel{\mu}}
\newcommand{\KS}{KS_{(G, \Gamma)}}
\newcommand{\TightIdeal}{\mathcal{I}_{\textup{tight}}}
\newcommand{\SingularIdeal}{\mathcal{I}_{\textup{sing}}}
\newcommand{\CKIdeal}{\mathcal{I}_{\textup{CK}}}
\newcommand{\SingularNotTightIdeal}{\SingularIdeal\setminus\TightIdeal}
\DeclareMathOperator{\Support}{\text{supp}}
\newcommand{\VertSet}{\Gamma^0}
\newcommand{\EdgeSet}{\Gamma^1}
\newcommand{\InvEdgeSet}{\Gamma^{-1}}
\newcommand{\PathSet}{\Gamma^*}
\begin{document}
\title{Simplicity of algebras and $C^*$-algebras of self-similar groupoids}
\author{Josiah Aakre}
\address{Department of Mathematics, University of Manchester, Manchester M13 9PL, United Kingdom}
\email{josiah.aakre@postgrad.manchester.ac.uk}

\subjclass[2020]{20M18, 20M25, 22A22, 47L40}
\keywords{Self-similar groupoid; Inverse semigroup algebra; Topological groupoid algebra; Singular ideal.}
\begin{abstract}
Many previously studied path algebras and self-similar group algebras may be viewed as Steinberg algebras of self-similar groupoids. By way of inverse semigroup algebras, we characterize when the Steinberg algebra of a self-similar groupoid is simple. We show that the simplicity of the reduced $C^*$-algebra of a contracting self-similar groupoid coincides with the simplicity of the Steinberg algebra. As an aside, we show that simplicity of the two algebras sometimes depends only on the skeleton of the self-similar groupoid acting on a strongly connected graph. Finally, we apply our methods to examples including a self-similar groupoid akin to multispinal self-similar groups and a self-similar groupoid built from the well-known Basilica group.
\end{abstract}
\maketitle
\section{Introduction}
Self-similar groups were formally introduced by Nekrashevych in \cite{nekrashevych2005self}. This family of groups has been a fruitful source of examples with exotic properties such as the famous Grigorchuk group \cite{grigorchuk}, the first example of a group with intermediate growth. Self-similar groups come with an action on an infinite regular rooted tree such that the self-similarity of the tree is reflected in the action. Contracting self-similar groups form an important subclass in which the action of the group is ultimately dependent on a finite subset of the group. Self-similar groups have been used to build $C^*$-algebras \cite{Nekrashevych2009Alg} and $*$-algebras \cite{nekrashevych2015growthetalegroupoidssimple} which are often best understood in the contracting setting. Laca, Raeburn, Ramagge, and Whittaker \cite{SSGroupoidsWhitaker} introduced self-similar groupoids, a generalization of self-similar groups, in order to study dynamical properties of two associated $C^*$-algebras. In this work, we classify in the contracting setting the simplicity of the reduced $C^*$-algebra by way of its discrete analogue.

Several important classes of $C^*$-algebras admit a topological groupoid model. Often, these topological groupoids may be constructed and understood via inverse semigroups. See \cite{exel2008inversesemigroupscombinatorialcalgebras, Farthing2005,martinez2025algebraicsingularfunctionsdense, Paterson2002,STEINBERG2010689} for example. It was Steinberg \cite{STEINBERG2010689} who introduced the suitable discrete analogue of reduced $C^*$-algebras, which we now call Steinberg algebras. Steinberg algebras are constructed from ample groupoids, a class of topological groupoids that may always be constructed from inverse semigroups. It was later shown by Steinberg and Szak\'acs \cite{Steinbergandnora2020simplicityinversesemigroupetale} that just as ample groupoids may be constructed from inverse semigroups, Steinberg algebras may be constructed from inverse semigroup algebras. This common approach through topological groupoids to both the analytic and discrete settings suggests that the $C^*$-algebras and complex Steinberg algebras ought to be closely related. Indeed, there are a number of examples where the simplicity of complex Steinberg algebras and $C^*$-algebras coincide, including higher-rank graph $C^*$-algebras \cite{Robertson_2007} and the discrete analogue, Kumjian-Pask algebras \cite{clark2012groupoidgeneralizationleavittpath, pino2011kumjianpaskalgebrashigherrankgraphs} which include Leavitt path algebras.

Similarly, the simplicity of the $C^*$-algebras and complex Steinberg algebras coming from contracting self-similar groups has recently been shown to coincide \cite{brix2025hausdorffcoversnonhausdorffgroupoids, gardella2025simplicitycalgebrascontractingselfsimilar}. Note that the Steinberg algebra of a trivial self-similar group is just a Leavitt algebra. Our work extends these results to the setting of $C^*$-algebras and complex Steinberg algebras coming from contracting self-similar groupoids. The Steinberg algebras of self-similar groupoids are a large family of algebras encompassing both Leavitt path algebras \cite{Leavitt} (when the self-similar groupoid contains no nontrivial subgroups), Nekrashevych algebras \cite{nekrashevych2015growthetalegroupoidssimple} (when the self-similar groupoid is a group), and the algebraic analogues \cite{HazratPaskSimsSiera} of Exel-Pardo $C^*$-algebras \cite{EXELPardoAlgebras} (see \cite[Appendix A]{SSGroupoidsWhitaker}). By lifting to our setting the algorithm of Steinberg and Szak\'acs \cite{SteinbergandNora2023} along with the improvements of Gardella, Nekrashevych, Steinberg, and Vdovina \cite{gardella2025simplicitycalgebrascontractingselfsimilar}, we achieve what was previously achieved in both the graph algebra and self-similar group algebra settings: a description of the simplicity of the Steinberg algebra in terms of the defining ingredients, the self-similar groupoid and the graph being acted on.

We remark that in the $C^*$-algebra setting, Exel and Pardo's \cite{EXELPardoAlgebras} construction of algebras from self-similar groups acting on the path space of a graph (rather than just a bouquet of loops) gives a unified approach to both graph $C^*$-algebras and $C^*$-algebras coming from self-similar groups. Laca \textit{et al.} use self-similar groupoids to build a class of $C^*$-algebras which properly contain (see \cite[Appendix A]{SSGroupoidsWhitaker}) Exel and Pardo's class of $C^*$-algebras.

The paper is organized as follows. In Section \ref{Sec: Preliminaries}, we recall the preliminaries on self-similar groupoids, the associated inverse semigroup, and contracted inverse semigroup algebras. Section \ref{Sec: Congruence-free conditions} gives a complete characterization of when the inverse semigroup associated to a self-similar groupoid is congruence-free. We introduce the tight and singular ideals of the contracted inverse semigroup algebra in Section \ref{Sec: tight and singular ideals}. Section \ref{Sec: Collapsing Groupoids} reduces the question of simplicity to self-similar groupoids acting on strongly connected graphs. Section \ref{Sec: Supporting essnottight on groupoid subalgebra} shows that if the singular ideal is strictly larger than the tight ideal, then the difference in the ideals intersects the subalgebra spanned by the self-similar groupoid. In the contracting case, we show in Section \ref{Sec: Supporting ideals in nucleus} that the difference in the ideals intersects a subspace of the span of the nucleus. Section \ref{Sec: simplicity algorithm} gives a method to determine if a given contracting self-similar groupoid has a simple Steinberg algebra. Section \ref{Sec: simplicity of cstar algebras} couples the simplicity of the Steinberg algebra to the simplicity of the reduced $C^*$-algebra in the contracting case. Finally, we consider the simplicity of three examples in Section \ref{Sec: Examples}.

\section{Preliminaries}\label{Sec: Preliminaries}
Let $\Gamma=(\VertSet, \EdgeSet, \source, \range)$ be a row-finite directed graph with vertex set $\VertSet$, edge set $\EdgeSet$, and source and range maps $\source: \EdgeSet \rightarrow \VertSet$, and $\range: \EdgeSet\rightarrow \VertSet$. By row-finite, we mean that $\VertSet$ need not be finite, but for each $v\in\VertSet$, the set of edges leaving $v$, often denoted $\source^{-1}(v)$, is finite. We allow loops and multiple edges between two vertices. A path $p=e_1\ldots e_n$ in $\Gamma$ is a sequence of edges in $\EdgeSet$ such that $\range(e_i)=\source(e_{i+1})$ for $1\leq i < n$. We may represent the path $p$ pictorially by:
\[ \begin{tikzpicture}[>=Latex, edge from parent/.style={draw,-latex}]
\node (1) at (0,0) {};
\node (2) at (1, 0) {};
\node (3) at (2,0) {};
\node (4) at (3,0) {};
\node (5) at (4,0) {.};

\draw[->] (1) -- (2) node[above,pos=.5] {$e_1$};
\draw[->] (2) -- (3)node[above,pos=.5] {$e_2$};
\draw[-,dotted] (3) -- (4)node[above,pos=.5] {};
\draw[->] (4) -- (5)node[above,pos=.5] {$e_n$};
\end{tikzpicture}\]
A cycle is a path $p$ satisfying $\range(p)=\source(p)$ and $p$ is said to be nonempty if $p$ contains at least one edge. Note that our notion of paths coincides with the notion of walks in graph theory, as we allow a path to visit any given edge more than once. We denote the length of a path $p$ by $|p|$, and the set of all paths in $\Gamma$ of length $k$ is denoted by $\Gamma^k$. We consider vertices to be paths of length 0. The set of all finite length paths is denoted by $\PathSet$ and the source and range maps extend to $\PathSet$ by defining $\source(e_1\ldots e_n) = \source(e_1)$ and $\range(e_1\ldots e_n) = \range(e_n)$, or in the case of $v\in \VertSet$, we set $\source(v)=v=\range(v)$. Path concatenation is a natural partial operation on $\PathSet$: for paths $p=e_1\ldots e_n$ and $q=f_1\ldots f_m$, the concatenation $pq$ is the sequence $e_1\ldots e_nf_1\ldots f_m$ and is a path if and only if $\range(p)=\source(q)$. Equipping $\PathSet$ with this partial operation forms a category, whereas if we complete the operation by adjoining a zero to $\PathSet$ and set all undefined products to zero, $\PathSet$ forms a monoid. We note that both views are necessary in this paper; in particular, the categorical view is required in Section \ref{Sec: simplicity algorithm}. Vertices are idempotent under this operation. With the partial operation in mind, for each path $p$, we define
\[p\PathSet=\{pq: q\in \PathSet, \source(q)=\range(p)\}.\]
We say that two vertices $u$ and $v$ are strongly connected in $\Gamma$ if the sets $u\PathSet v$ and $v\PathSet u$ are each nonempty. We may similarly define an infinite path $\mathfrak{p}=e_1e_2e_3\ldots$ which has a source but no range, and we denote the set of all infinite paths as $\Gamma^{\omega}$. Let $\InvEdgeSet = \{e^{-1}: e\in \EdgeSet\}$ be a set of formal inverses of $\EdgeSet$ and extend the source and range maps by defining $\source(e^{-1}) = \range(e)$ and $\range(e^{-1}) = \source(e)$.

It is useful to identify $\PathSet$ with a directed graph with vertices corresponding to elements of $\PathSet$ and a directed edge from $p$ to $q$ if and only if $q=pe$ for some $e\in \EdgeSet$. It follows that for each $v\in \VertSet$, the set $v\PathSet$ is a directed tree rooted by $v$ and $\PathSet$ is a disjoint union of all such trees. If two trees $v\PathSet$ and $w\PathSet$ are isomorphic as directed graphs, we denote the set of all isomorphisms from $v\PathSet$ to $w\PathSet$ as $\text{Iso}(v,w)$. If $v=w$ then $\text{Iso}(v,v)$ is the automorphism group of $v\PathSet$ with identity denoted by $\text{id}_v$. Let $\tau$ be the equivalence relation on $\VertSet$ given by $(v,w)\in \tau$ if and only if $v\PathSet$ and $w\PathSet$ are isomorphic as directed graphs. Then the set
\[\textbf{Iso}(\PathSet) = \bigsqcup_{(v,w)\in \tau}\text{Iso}(v,w),\]
forms a groupoid with function composition as the partial operation defined precisely when composition is possible.

Let $G$ be a groupoid with object set $G^0=\VertSet$. A faithful action of $G$ on $\PathSet$ is an injective groupoid homomorphism $\sigma$ from $G$ to $\textbf{Iso}(\PathSet)$ such that $\sigma$ restricts to a bijection between $G^0$ and $\{\text{id}_v: v\in \VertSet\}$. For each $g\in G$, we identify $\sigma(g)$ and $g$. To indicate that $g\in \text{Iso}(v,w)$ we often write $\dom g = v\PathSet$ and $\im g = w\PathSet$. We denote the inverse of $g\in G$ by $g^{-1}$.

\begin{Definition}\label{Def: self-similar groupoid}
    Given a row-finite directed graph $\Gamma$, a \textit{self-similar groupoid} is a triple $(G, \sigma, \PathSet)$, often abbreviated to $(G, \Gamma)$ if the action is specified or implied, comprised of a groupoid $G$ together with a faithful action $\sigma$ of $G$ on $\PathSet$ such that for all $g\in G$ and all $e\in \EdgeSet\cap\dom g$, there exists some $h\in G$ such that
\[g(ep) = g(e)h(p),\]
for all $p\in \range(e)\PathSet$. Such an element $h$ is necessarily unique, and we call it the \textit{restriction} of $g$ to $e$, denoted $g|_e$. We may define the restriction of $g$ to a path $ep$ in $\dom g$ by recursively defining $g|_{ep} = (g|_{e})|_{p}$.
\end{Definition}
\begin{Remark}
    More details on self-similar groupoids can be found in \cite{SSGroupoidsWhitaker}. We note that our conventions differ from those of \cite{SSGroupoidsWhitaker} where a path is a sequence of edges $e_1\ldots e_n$ such that $\source(e_i)=\range(e_{i+1})$. This difference in convention has no bearing on the underlying concepts, and any examples or properties of self-similar groupoids which we cite from \cite{SSGroupoidsWhitaker} will be translated into our convention without explaining the translation.
    \end{Remark}
    The following properties of self-similar groupoids will be used throughout without reference.
    \begin{Proposition}[{\cite[Lemma 3.4, Proposition 3.6]{SSGroupoidsWhitaker}}]
    For all $g,h\in G$ with $\im g = \dom h$, and for all $p,q, w\in \PathSet$ with $\range(p)=\source(q)$, $pq\in \dom g$, and $w\in \dom g^{-1}$, we have
    \begin{enumerate}
        \item $\dom g|_p = \range(p)\PathSet$ and $\im g|_p = \range(g(p))\PathSet$,
        \item $\source(g(p)) = g(\source(p))$ and $\range(g(p)) = g|_p(\range(p))$,
        \item $\id_{\source(p)}|_p = \id_{\range(p)}$,
        \item $(hg)|_p = (h|_{g(p)})(g|_p)$,
        \item $g|_{pq}=(g|_p)|_q$,
        \item $g^{-1}|_w = (g|_{g^{-1}(w)})^{-1}$.
    \end{enumerate}
    \end{Proposition}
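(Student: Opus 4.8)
The plan is to prove all six items by a short chain of inductions on path length, bootstrapping everything from the single-edge case built into Definition~\ref{Def: self-similar groupoid}. Throughout I write $hg$ for the groupoid composite, so that $(hg)(p)=h(g(p))$, and I use repeatedly that an element of $\text{Iso}(v,w)$ is a directed-graph isomorphism between the rooted trees $v\PathSet$ and $w\PathSet$: it carries the unique in-degree-zero vertex to the unique in-degree-zero vertex, hence sends the root $v$ to the root $w$, preserves distance from the root (so $|g(p)|=|p|$, and in particular $g$ maps edges to edges), and restricts to a bijection of the subtree $e\PathSet$ of paths extending an edge $e$ onto the subtree $g(e)\PathSet$.

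I would first settle the single-edge cases. For $e\in\EdgeSet\cap\dom g$, the element $g|_e\in G$ must be defined on every $p\in\range(e)\PathSet$, and its domain has the form $u\PathSet$, which forces $u=\range(e)$; thus $\dom g|_e=\range(e)\PathSet$. Writing the subtree $g(e\PathSet)$ both as $g(e)\PathSet$ (since $g$ maps the subtree $e\PathSet$ onto it) and as $\{\,g(e)\,g|_e(p):p\in\range(e)\PathSet\,\}$, and cancelling the prefix $g(e)$, gives $\im g|_e=\range(g(e))\PathSet$; this is (1) for $p=e$. For (3), from $ep=\id_{\source(e)}(ep)=\id_{\source(e)}(e)\,(\id_{\source(e)}|_e)(p)=e\,(\id_{\source(e)}|_e)(p)$ we cancel $e$ to obtain $(\id_{\source(e)}|_e)(p)=p$ for all such $p$, i.e.\ $\id_{\source(e)}|_e=\id_{\range(e)}$.

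Next I would run inductions on $|p|$ based on the defining recursion $g|_{ep'}=(g|_e)|_{p'}$ to obtain (1) and (3): in each case one rewrites $g|_{ep'}=(g|_e)|_{p'}$, invokes the single-edge case to identify the first restriction, and applies the inductive hypothesis to the shorter path $p'$, also using $g(ep')=g(e)\,g|_e(p')$ and $\range(ep')=\range(p')$. Item (2) is then immediate: its first half is the statement that $g$ maps roots to roots, and its second half is the statement that $g|_p\in\text{Iso}(\range(p),\range(g(p)))$, which is (1), carries the root $\range(p)$ to the root $\range(g(p))$. Item (5) is a separate induction on $|p|$: for $|p|=0$ it reads $g|_{vq}=g|_q=(g|_v)|_q$, and for $p=ep''$ one computes $g|_{(ep'')q}=(g|_e)|_{p''q}=\bigl((g|_e)|_{p''}\bigr)|_q=(g|_{ep''})|_q$ using the recursion and the inductive hypothesis applied to $g|_e$. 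Item (4) is yet another induction on $|p|$: for $p=e$, expanding $(hg)(ep')=(hg)(e)\,(hg)|_e(p')$ and $h\bigl(g(e)\,g|_e(p')\bigr)=h(g(e))\,h|_{g(e)}\!\bigl(g|_e(p')\bigr)$ and cancelling the common prefix $h(g(e))=(hg)(e)$ gives the base case; the inductive step applies the inductive hypothesis to the pair $(g|_e,h|_{g(e)})$ and then uses (5) to rewrite $\bigl(h|_{g(e)}\bigr)|_{g|_e(p')}=h|_{g(e)\,g|_e(p')}=h|_{g(ep')}$. Here (1) is exactly what certifies that each restriction and composite written down is legitimate --- for instance $\im g|_p=\range(g(p))\PathSet=\dom h|_{g(p)}$.

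Finally, (6) falls out of (3) and (4). Applying (4) to $g^{-1}g=\id_{\dom g}$ and to $gg^{-1}=\id_{\im g}$, and using (3) to identify the restrictions of the identity morphisms, gives, for $w\in\dom g^{-1}$ and $p:=g^{-1}(w)$, the two identities $(g^{-1}|_w)(g|_p)=\id_{\range(p)}$ and $(g|_p)(g^{-1}|_w)=\id_{\range(w)}$, so $g^{-1}|_w$ is a two-sided inverse of $g|_{g^{-1}(w)}$, which is exactly (6). The only genuinely fiddly part is the domain/image bookkeeping needed to know that every composite is composable and every restriction is taken at a path in the appropriate domain; item (1) is precisely the tool that makes that bookkeeping automatic, which is why it is established first and used everywhere afterward.
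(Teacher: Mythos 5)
Your proof is correct. Note that the paper itself supplies no argument for this proposition --- it is quoted from Laca--Raeburn--Ramagge--Whittaker \cite{SSGroupoidsWhitaker} --- so there is nothing internal to compare against; your bootstrapping from the single-edge case of Definition~\ref{Def: self-similar groupoid} via induction on path length, with (1) doing the domain/codomain bookkeeping, (5) feeding the inductive step of (4), and (6) extracted from (3) and (4) applied to $g^{-1}g$ and $gg^{-1}$, is exactly the standard argument one would find in the cited source. The only point worth flagging is that you implicitly adopt the convention $g|_v=g$ for the root vertex $v$ of $\dom g$ (used in your length-zero base cases); this is the unique convention consistent with items (1) and (5), but since the paper's recursive definition only covers $g|_{ep}$, it deserves a sentence.
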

\begin{Definition}
    A self-similar groupoid $(G, \Gamma)$ is said to be \textit{contracting} if there exists a finite subset $A\subseteq G$ such that for all $g\in G$, there exists some natural number $n$ such that $g|_{p}\in A$ for all $p\in \Gamma^{n}$. As seen in \cite{brownloweWhittaker2023kkdualityselfsimilargroupoidactions}, there is a unique minimal set $N$ satisfying these properties called the \textit{nucleus} of $(G, \Gamma)$.
\end{Definition}
\begin{Remark}
    It is easy to see that if a self-similar groupoid $(G, \Gamma)$ is contracting, the finiteness of the nucleus implies a similar finiteness property in $\PathSet$: $\Gamma$ must contain a finite subgraph $\Omega$ such that every infinite path of $\Gamma$ contains finitely many edges from $\Gamma^1\setminus\Omega^1$. This observation does not play an explicit role in this paper, however, because the results of Sections \ref{Sec: Supporting ideals in nucleus}, \ref{Sec: simplicity algorithm} and \ref{Sec: simplicity of cstar algebras} apply only in the contracting setting, it is worth noting that not every row-finite graph admits a contracting self-similar groupoid action.
\end{Remark}

An inverse semigroup is a semigroup $S$ such that for each $s\in S$, there exists a unique $s^*\in S$ such that $ss^*s=s$ and $s^*ss^*=s^*$. Associated to any self-similar groupoid $(G, \Gamma)$, there is an inverse semigroup with zero denoted by $S_{(G, \Gamma)}$. Beware that the two uses of $*$ in $\Gamma^*$ and $s^*$ are unrelated.
\begin{Definition}\label{Def: Associated Inv SG}
    The inverse semigroup $S_{(G, \Gamma)}$ associated to a self-similar groupoid $(G, \Gamma)$ is generated by $\VertSet\cup \EdgeSet\cup \InvEdgeSet\cup G$ together with a zero $0$. We set $g^*=g^{-1}$ and $e^*=e^{-1}$ and $v^*=v$ for $g\in G$, $e\in \EdgeSet$, and $v\in \VertSet$. The operation of $S_{(G, \Gamma)}$ extends that of $G$ by defining any undefined products in $G$ as 0, and the following additional relations are imposed:
\begin{enumerate}
    \item $\source(e)e = e\range(e) = e$ for all $e\in \VertSet\cup \EdgeSet\cup \InvEdgeSet$;
    \item $uv=0$ for distinct $u,v\in \VertSet$;
    \item $e^* f=0$ for distinct $e,f\in \EdgeSet$;
    \item $e^* e= \range(e)$ if $e\in \EdgeSet$;
    \item $ge = g(e)g|_e$ for all $g\in G$ and $e\in \EdgeSet\cap \dom g$;
    \item $\id_v = v$ for all $v\in \VertSet$ and $\id_v\in G$.
\end{enumerate}
\end{Definition}
This is not a minimal list of relations. We highlight that the first four relations come from what is called the graph inverse semigroup of $\Gamma$ \cite{AshHallGraphISGs}, while the last two relations incorporate the action of $G$.

We define an action of $S_{(G, \Gamma)}$ on $\PathSet$ by allowing $G\subseteq S_{(G, \Gamma)}$ to act by the given self-similar action, and defining actions of the remaining generators as follows. An edge $e\in \EdgeSet \subseteq S_{(G, \Gamma)}$ acts as a bijection $\range(e)\PathSet\rightarrow e\PathSet$ by concatenation: $p\mapsto ep$. The inverse edge $e^*\in \InvEdgeSet\subseteq S_{(G, \Gamma)}$ acts as the inverse map $e\PathSet \rightarrow \range(e)\PathSet$, given by $ep\mapsto p$. Finally, the zero map acts with empty domain. It is routine to check that this defines an action of $S_{(G, \Gamma)}$ on $\PathSet$ by partial bijections.

Similar to the self-similar group case \cite{LawsonPerrotMonoid}, it can be shown that the nonzero elements of $S_{(G, \Gamma)}$ may be canonically expressed in the form $pgq^*$ where $p,q\in \PathSet$ and $g\in \text{Iso}(\range{(q)},\range{(p)})$. One may check that $pgq^*$ acts bijectively with domain $q\PathSet$ and image $p\PathSet$. It is easy to verify that the semilattice of idempotents of $S_{(G, \Gamma)}$ is the subset
\[E_{(G, \Gamma)}=\{pp^*: p\in \PathSet\}\cup\{0\}.\] 

\begin{Definition}\label{Def: Contracted Semigroup Algebra}
    For a field $K$ and an inverse semigroup $S$ with zero, define the contracted inverse semigroup algebra $KS$ as the $K$-algebra with basis $S^\sharp =S\setminus\{0\}$ and multiplication extending that of $S$ where we identify the zeros of $S$ and $K$.
\end{Definition}
Fix a row-finite directed graph $\Gamma$ and a self-similar groupoid $(G, \Gamma)$. We are interested in studying the contracted algebra of $S_{(G, \Gamma)}$. The objective of this paper is to classify the simplicity of a quotient of $\KS$ by what is known as the tight ideal. It was originally observed by Munn \cite{munn1979simple} that for an inverse semigroup $S$, a proper quotient of $S$ gives rise to a proper quotient of $KS$. Thus, we must understand quotients of $S$ before we understand quotients of $KS$. 

We briefly recall a few definitions standard in inverse semigroup theory. Let $S$ be an inverse semigroup with a zero and let $E(S)$ be the semilattice of idempotents of $S$. Let $\muRelation$ be a relation on $S$ given by $a\muRelation b$ if and only if $aea^*=beb^*$ for all $e\in E(S)$. It turns out that $\muRelation$ is a congruence, that is, an equivalence relation which is compatible with the operation of the semigroup. We say that $S$ is \textit{fundamental} (see \cite{howie1995fundamentals, Munn_Fundamental_ISG}) if $a=b$ whenever $a\muRelation b$. We say that $S$ is \textit{0-simple} if the only ideals of $S$ are $S$ and $\{0\}$. Finally, we say that the semilattice of idempotents $E(S)$ is \textit{0-disjunctive} (see \cite{lawson2020primer}) if for all $e,f\in E(S)$, there exists some $h\in E(S)$ such that of the products $eh$ and $fh$, exactly one is zero.
An inverse semigroup with zero is \textit{congruence-free} if it admits no nontrivial proper quotients. The following is a well-known result of inverse semigroup theory due to \cite[Theorem 2.2]{Trotter1974} and independently, {\cite[Theorem 5]{Baird_1975}}.
\begin{Theorem}\label{Thm: CongruenceFreeConditions}
    Let $S$ be an inverse semigroup with zero. Then $S$ is congruence-free if and only if $S$ is fundamental, $S$ is 0-simple, and $E(S)$ is 0-disjunctive.
\end{Theorem}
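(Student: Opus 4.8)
The plan is to prove the two implications separately. Throughout I use two standard facts, recorded in the references cited above alongside the definitions: $\mu$ is the \emph{maximum} idempotent-separating congruence on $S$, and the Rees congruence $\rho_I$ of an ideal $I$ of $S$ is a congruence. I also use $0$-disjunctivity in the (equivalent) form: for every strict pair $f<e$ in $E(S)$ there is a nonzero idempotent $h\le e$ with $hf=0$; reducing the stated condition to this form is an easy exercise (replace an arbitrary pair $e,f$ by $ef$ together with one of $e,f$). We may assume $S\neq\{0\}$, the degenerate case being immediate.

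For the forward implication, assume $S$ is congruence-free. Since $\mu$ is a congruence it equals $\mathrm{id}_S$ or the universal relation, and if it were universal then $b\mathrel{\mu}0$ for every $b\in S$ would give $bb^{*}=b(b^{*}b)b^{*}=0$ and hence $b=bb^{*}b=0$, forcing $S=\{0\}$, contrary to assumption; thus $\mu=\mathrm{id}_S$ and $S$ is fundamental. If $I\neq\{0\}$ is an ideal then $0\in I$ and the Rees congruence $\rho_I$ is trivial or universal; the trivial case collapses $I$ to a point, forcing $I=\{0\}$, so $\rho_I$ is universal and $I=S$, giving $0$-simplicity. For $0$-disjunctivity I argue contrapositively: suppose there is a strict pair $f<e$ in $E(S)$ such that $fh\neq0$ for every nonzero idempotent $h\le e$. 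Let $\rho$ be the smallest congruence containing $(e,f)$; concretely $\rho$ is the smallest equivalence relation containing all pairs $(aeb,afb)$ with $a,b\in S^{1}$, and one checks directly that any such relation is automatically a congruence. Then $\rho$ is nontrivial because $e\mathrel{\rho}f$ and $e\neq f$. The crux is that $\rho$ is also \emph{proper}, and this is exactly where the density hypothesis on $f$ enters. Writing $aeb=(ae)(eb)$ and using that a product $xy$ in an inverse semigroup vanishes iff $(x^{*}x)(yy^{*})=0$, one computes that $aeb=0$ iff the idempotent $k:=(a^{*}a)(bb^{*})e$ (with $k\le e$) is zero, while $afb=0$ iff $kf=0$; and $kf=0\iff k=0$, since $k=0$ gives $kf=0$ and $k\neq0$ gives $kf=fk\neq0$ by density. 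Hence $aeb=0\iff afb=0$, so each generating pair of $\rho$ joins two elements that vanish simultaneously; consequently the $\rho$-class of $0$ is $\{0\}$, so $\rho$ is proper. This contradicts congruence-freeness, so $E(S)$ is $0$-disjunctive.

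For the converse, assume $S$ is fundamental, $0$-simple, and $E(S)$ is $0$-disjunctive, and let $\rho$ be a nontrivial congruence on $S$; I must show $\rho$ is universal. If $\rho$ is idempotent-separating then $\rho\subseteq\mu=\mathrm{id}_S$ by maximality of $\mu$ and fundamentality, contradicting nontriviality; hence $\rho$ identifies distinct idempotents $e_0\neq f_0$. Multiplying $e_0\mathrel{\rho}f_0$ by $e_0$ and by $f_0$ gives $e_0f_0\mathrel{\rho}e_0$ and $e_0f_0\mathrel{\rho}f_0$, and a short case split produces a strict pair $e'<f'$ in $E(S)$ with $e'\mathrel{\rho}f'$. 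By $0$-disjunctivity there is a nonzero idempotent $h\le f'$ with $he'=0$; then $e'h=0$ while $f'h=h\neq0$, so $0\mathrel{\rho}h$. Since $h$ is a nonzero idempotent, $0$-simplicity forces the ideal $ShS$ to equal $S$, so every $s\in S$ can be written $s=uhv$ with $u,v\in S$, whence $s=uhv\mathrel{\rho}u\,0\,v=0$. Thus $\rho$ is universal, and $S$ is congruence-free.

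\textbf{Main obstacle.} The substantial step is the $0$-disjunctivity half of the forward implication: one must manufacture an explicit proper nontrivial congruence out of a failure of $0$-disjunctivity and then certify that it has not secretly become universal. Everything hinges on the equivalence $aeb=0\iff afb=0$, which requires first reducing to a strictly comparable pair $f<e$ and then a brief computation with idempotents; the remaining ingredients --- the arguments via $\mu$ and via Rees congruences, the case analysis extracting a comparable pair, and the propagation through $0$-simplicity --- are routine. One could instead route this step through the machinery of congruence pairs and normal traces for inverse semigroups, but the hands-on argument sketched here is self-contained.
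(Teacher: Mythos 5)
Your proof is correct. Note that the paper does not prove this statement at all --- it is quoted as a known result with citations to Trotter and to Baird --- so there is no in-paper argument to compare against; your write-up is essentially the standard proof from that literature. All the key steps check out: the dichotomy for $\mu$ and for Rees congruences, the reduction of the paper's phrasing of $0$-disjunctivity to the ``dense idempotent'' form, the verification that $aeb=0\iff afb=0$ via the idempotent $k=(a^*a)(bb^*)e$ (so the principal congruence generated by $(e,f)$ has $\{0\}$ as the class of $0$ and is therefore proper), and the converse chain through $\mu$, a strictly comparable $\rho$-related pair, $0$-disjunctivity, and $0$-simplicity. The only points left to the reader (that the equivalence closure of a translation-stable relation is a congruence, and that $ShS=S$ suffices since $h=hhh$ puts $h$ in $ShS$) are genuinely routine.
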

\section{Congruence-free associated inverse semigroups}\label{Sec: Congruence-free conditions}
It is shown in \cite{LawsonPerrotMonoid} that $S_{(G, \Gamma)}$ is congruence-free for every self-similar group $(G, \Gamma)$. The self-similar groupoid setting does not allow such a convenience. In this section, we give necessary and sufficient conditions on the self-similar groupoid $(G, \Gamma)$ so that $S_{(G, \Gamma)}$ is congruence-free. Theorem \ref{Thm: CongruenceFreeAppliedS_G} first appeared in the author's master's dissertation.

\begin{Theorem}\label{Thm: CongruenceFreeAppliedS_G}
    Let $(G, \Gamma)$ be a self-similar groupoid and let $S_{(G, \Gamma)}$ be the associated inverse semigroup. Then $S_{(G, \Gamma)}$ is congruence-free if and only if the following hold:
    \begin{enumerate}
        \item  $|\source^{-1}(v)|\neq 1$ for all $v\in \VertSet$,
        \item  for each pair of distinct vertices $v,w \in\VertSet$, there exists $p\in w\PathSet$ and $q\in v\PathSet$ such that $G$ intersects $\text{Iso}(\range(p), v)$ and $\text{Iso}(\range(q), w)$.
    \end{enumerate}
\end{Theorem}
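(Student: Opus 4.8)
The plan is to apply Theorem~\ref{Thm: CongruenceFreeConditions}: writing $S := S_{(G,\PathSet)}$, congruence-freeness of $S$ is equivalent to $S$ being fundamental, $0$-simple, and $E(S)$ being $0$-disjunctive, and I would verify these three properties separately. Concretely I would show that $S$ is \emph{always} fundamental (so this property imposes no constraint), that $E(S)$ is $0$-disjunctive precisely when (1) holds, and that $S$ is $0$-simple precisely when (2) holds. Throughout I would use the canonical form $pgq^{*}$ (with $p,q\in\PathSet$ and $g\in G\cap\text{Iso}(\range(q),\range(p))$), which acts on $\PathSet$ as the partial bijection $q\PathSet\to p\PathSet$, $qw\mapsto pg(w)$, together with the elementary facts that $pp^{*}\le qq^{*}$ in $E(S)$ iff $q$ is a prefix of $p$, that $pp^{*}\wedge qq^{*}=0$ iff $p$ and $q$ are incomparable, and that $pp^{*}=qq^{*}$ iff $p=q$.

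For fundamentality I would show that the conjugation action $\theta_{a}\colon e\mapsto aea^{*}$ on $E(S)$ recovers $a=pgq^{*}$, which is exactly triviality of the congruence $\mu$. A direct computation gives $\theta_{a}(rr^{*})=0$ unless $r$ is comparable to $q$, gives $\theta_{a}(rr^{*})=pp^{*}$ when $r$ is a prefix of $q$, and gives $\theta_{a}(rr^{*})=(pg(s))(pg(s))^{*}$ when $r=qs$ with $s$ nonempty. Since $|pg(s)|=|p|+|s|>|p|$ in the last case, $pp^{*}$ is the \emph{unique} maximal nonzero value in the range of $\theta_{a}$, so $p$ is recovered from $\theta_{a}$; then $q$ is the longest $r$ with $\theta_{a}(rr^{*})=pp^{*}$, the set of such $r$ being exactly the prefixes of $q$; and finally $g$ is recovered from the values $\theta_{a}(rr^{*})=(pg(s))(pg(s))^{*}$ at $r=qs$, now that $p$ is known. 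Hence $\theta_{a}=\theta_{b}$ forces $a=b$. This step is the one requiring care: one must rule out the degenerate possibility that $\theta_{a}=\theta_{b}$ with $b$'s right leg a proper extension of $q$ along a non-branching segment, so that $\theta_{a}$ and $\theta_{b}$ have the same support; but then the ``unique maximal value'' observation forces the range-leg of $b$ to equal $p$, and evaluating both maps at the longer of the two right legs yields a length contradiction, so in fact the right legs coincide and the remaining data agree.

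For the equivalence of $0$-disjunctivity with (1) --- a condition on $E(S)$, which depends only on $\Gamma$ --- suppose first that (1) holds and $0\ne f<e$ in $E(S)$; writing $e=pp^{*}$ and $f=qq^{*}$ with $q=ps$, $s$ nonempty, and letting $s_{1}$ be the first edge of $s$, the vertex $\range(p)$ emits $s_{1}$ and hence at least two edges, so there is an edge $f_{1}\ne s_{1}$ leaving $\range(p)$, and $h:=(pf_{1})(pf_{1})^{*}$ satisfies $0\ne h\le e$ and $h\wedge f=0$ because $pf_{1}$ and $q$ are incomparable. Conversely, if some vertex $v$ emits exactly one edge $e$, then $v\PathSet=\{v\}\sqcup e\PathSet$, so $ee^{*}<vv^{*}$ with $ee^{*}\ne 0$, yet every nonzero $h\le vv^{*}$ is either $vv^{*}$ itself or lies below $ee^{*}$ and hence meets $ee^{*}$ nontrivially; thus $E(S)$ is not $0$-disjunctive.

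For the equivalence of $0$-simplicity with (2) I would use the standard characterization (via the $\mathcal{J}$-preorder on idempotents) that an inverse semigroup with zero is $0$-simple iff for every pair of nonzero idempotents $e,f$ there is $s$ with $s^{*}s=f$ and $ss^{*}\le e$. For $e=pp^{*}$ and $f=qq^{*}$, the constraint $s^{*}s=qq^{*}$ forces the canonical form $s=uhq^{*}$, whence $ss^{*}=uu^{*}$; so the existence of such an $s$ amounts to the existence of a path $u\in p\PathSet$ and an $h\in G\cap\text{Iso}(\range(q),\range(u))$, and writing $u=pt$ this says precisely that there is a path $t$ from $\range(p)$ together with a groupoid element taking $\range(q)$ to $\range(t)$. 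Letting $p,q$ vary so that $\range(p),\range(q)$ exhaust the vertices, and using that $G$ is closed under inverses, this is exactly condition (2), with reflexivity covering the case $v=w$. I expect the fundamentality argument --- in particular the bookkeeping around its degenerate case --- to be the main obstacle; the other two equivalences are comparatively mechanical.
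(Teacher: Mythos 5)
Your proposal is correct and follows essentially the same route as the paper: reduce via Theorem~\ref{Thm: CongruenceFreeConditions} to showing that $S_{(G,\PathSet)}$ is always fundamental, that $0$-disjunctivity of $E(S)$ is equivalent to condition (1), and that $0$-simplicity is equivalent to condition (2). The only differences are presentational --- you reconstruct $p$, $q$, $g$ directly from the conjugation map $\theta_a$ (correctly disposing of the non-branching degenerate case) where the paper first extracts $aa^*=bb^*$ and $a^*a=b^*b$ and then conjugates back into $G$, and you phrase $0$-simplicity via the $\mathcal{J}$-preorder on idempotents rather than first reducing principal ideals to vertex generators --- but the underlying computations coincide.
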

Theorem \ref{Thm: CongruenceFreeAppliedS_G} is a result of the remainder of the section.
\begin{Proposition}\label{Prop: SGG always fundamental}
    Let $(G, \Gamma)$ be a self-similar groupoid and let $S_{(G, \Gamma)}$ be the associated inverse semigroup. Then $S_{(G, \Gamma)}$ is fundamental.
\end{Proposition}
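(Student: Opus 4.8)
The plan is to argue directly with the canonical form of elements of $S_{(G,\PathSet)}$. Recall that every nonzero $a\in S_{(G,\PathSet)}$ can be written uniquely as $a=pgq^*$ with $p,q\in\PathSet$ and $g\in\text{Iso}(\range(q),\range(p))$, and that $E(S_{(G,\PathSet)})=\{ww^*:w\in\PathSet\}\cup\{0\}$. So I would fix nonzero elements $a=pgq^*$ and $b=rhs^*$ with $a\mathrel{\mu}b$, aiming to show $a=b$; the degenerate case in which one of them is $0$ is immediate, since $0\mathrel{\mu}b$ forces $bb^*=b(b^*b)b^*=0$ and hence $b=0$.

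The first step is to pin down the path data $p,q$ and $r,s$. A short computation from the relations of Definition \ref{Def: Associated Inv SG} (together with the identity $w^*w=\range(w)$ valid for every path $w$, and $g\,\range(q)\,g^*=\range(p)$ for $g\in\text{Iso}(\range(q),\range(p))$) gives $aa^*=pp^*$, $a^*a=qq^*$, and likewise $bb^*=rr^*$, $b^*b=ss^*$. Using the standard fact that $\mu$ is contained in Green's relation $\mathcal{H}$ (see, e.g., \cite{howie1995fundamentals}; equivalently, feeding $e=a^*a$ into the definition of $\mu$ yields $aa^*=ba^*ab^*\le bb^*$ and symmetrically, and likewise for $a^*\mathrel{\mu}b^*$), we obtain $aa^*=bb^*$ and $a^*a=b^*b$, i.e.\ $pp^*=rr^*$ and $qq^*=ss^*$. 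The map $w\mapsto ww^*$ is injective on $\PathSet$ — since $ww^*$ acts on $\PathSet$ as the identity on the subtree $w\PathSet$, whose root recovers $w$ — so $p=r$ and $q=s$. Thus $a=pgq^*$ and $b=phq^*$ with $g,h\in\text{Iso}(\range(q),\range(p))$, and it remains to prove $g=h$.

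The second step is to feed the right idempotents into the hypothesis $aea^*=beb^*$. For each $t\in\range(q)\PathSet=\dom g=\dom h$, take $e=(qt)(qt)^*\in E(S_{(G,\PathSet)})$. Using $q^*q=\range(q)$ and $\range(q)t=t$ one simplifies $q^*\,(qt)(qt)^*\,q=tt^*$, so $aea^*=pg\,tt^*\,g^*p^*$. By relation (5) of Definition \ref{Def: Associated Inv SG}, extended along the path $t$ via the recursive restriction of Definition \ref{Def: self-similar groupoid} (an easy induction on $|t|$ using $g|_{pq}=(g|_p)|_q$), we have $gt=g(t)\,g|_t$ in $S_{(G,\PathSet)}$; since $g|_t\in\text{Iso}(\range(t),\range(g(t)))$ we get $g|_t(g|_t)^*=\id_{\range(g(t))}$, hence $g\,tt^*\,g^*=g(t)g(t)^*$ and therefore $aea^*=\bigl(pg(t)\bigr)\bigl(pg(t)\bigr)^*$. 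The identical computation with $h$ in place of $g$ gives $beb^*=\bigl(ph(t)\bigr)\bigl(ph(t)\bigr)^*$. Now $aea^*=beb^*$ and the injectivity of $w\mapsto ww^*$ force $pg(t)=ph(t)$, hence $g(t)=h(t)$. Since $t$ ranges over all of $\dom g$, this says $g=h$ as tree isomorphisms, so $a=b$.

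Most of this is routine bookkeeping with the defining relations; I do not expect a genuine obstacle. The one point worth stating carefully is the passage $gt=g(t)\,g|_t$ for a \emph{path} $t$ rather than a single edge, which is exactly where the recursive definition of the restriction and the identity $g|_{pq}=(g|_p)|_q$ enter, and the verification that $w\mapsto ww^*$ is injective. Conceptually the statement is just the assertion that conjugation of idempotents already detects all the data $(p,g,q)$ of an element of $S_{(G,\PathSet)}$, which is precisely what fundamentality measures.
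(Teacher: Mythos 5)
Your proof is correct and follows essentially the same route as the paper's: reduce to canonical forms, use the $\mu$-relation on $aa^*$ and $a^*a$ to force the path components to agree, then conjugate idempotents of the form $zz^*$ to deduce $g(t)=h(t)$ for all $t$ and invoke faithfulness. The only (cosmetic) differences are that you obtain $aa^*=bb^*$ via $\mu\subseteq\mathcal{H}$ rather than via the vertex idempotents, and you feed $(qt)(qt)^*$ directly into $aea^*=beb^*$ instead of first reducing to $g\mathrel{\mu}h$ by multiplying by $p^*$ and $q$; both computations land on the same identity $g\,tt^*\,g^*=g(t)g(t)^*$.
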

\begin{proof}
    First, observe that no nonzero element $s$ is $\muRelation$ related to $0$ as $ss^*ss^* = ss^*\neq 0s^*s0$. Consider two nonzero elements of $S_{(G, \Gamma)}$ in canonical form, $a=pgq^*$ and $b=whx^*$ with $h,g\in G$ and $p,q,w,x\in \PathSet$. Suppose that $a \muRelation b$. It is well-known that this implies that $aa^*\muRelation bb^*$ and $a^*a\muRelation b^*b$. For each nonzero $s\in S_{(G, \Gamma)}$, there is exactly one $v\in \VertSet$ such that $ss^*v ss^*=ss^*$, and $ss^*uss^*=0$ for all other $u\in \VertSet$. It follows that $aa^*=bb^*$ and $a^*a=b^*b$. We then have,
    \begin{align*}
        aa^*=bb^* &\iff pgq^*qg^*p^* = whx^*xh^*w^*\\
        &\iff pg\range(q)g^*p^* = wh\range(x)h^*w^*\\
        &\iff pg\id_{\range(q)}g^*p^* = wh\id_{\range(x)}h^*w^*\\
        &\iff pp^*=ww^*,
    \end{align*}
    and similarly $qq^*=xx^*$. This clearly implies that $p=w$ and $q=x$.

    Because $\muRelation$ is a congruence, $a\muRelation b$ implies $p^*aq\muRelation p^*bq$, however $p^*aq = p^*pgq^*q= \range(p)g\range(q) = g$, and similarly $p^*bq = h$. Since $g\muRelation h$, we have that $geg^*=heh^*$ for each idempotent $e\in S_{(G, \Gamma)}$. When $e=0$, equality is trivial, but when $e\neq 0$, $e$ takes the form $zz^*$ for some $z\in \PathSet$. Observe that when $gz\neq 0$, we have, 
    \begin{align*}
    gzz^*g^* &= gz(gz)^* \\
    &= g(z)g|_z(g(z)g|_z)^* \\
    &= g(z)g|_zg|_z^*g(z)^* \\
    &= g(z)g(z)^*.
    \end{align*}
    Applying the same simplification to $hzz^*h^*$, we see that $g\muRelation h$ implies $g(z)g(z)^* = h(z)h(z)^*$, and therefore $g(z)=h(z)$. Observe also that $gz\neq 0$ if and only if $hz\neq 0$, so the action of $g$ and $h$ agree for all $z$ in $\dom g = \dom h$. By the faithfulness of the action, $g=h$. We conclude that $a\mathrel{\muRelation} b$ implies $a=b$ and so $S_{(G, \Gamma)}$ is fundamental.
\end{proof}
Let us now work towards characterizing when $S_{(G, \Gamma)}$ is 0-simple. Recall the action of $S_{(G, \Gamma)}$ on $\PathSet$.
\begin{Lemma}\label{Lem: half of J-relation argument}
    Let $S=S_{(G, \Gamma)}$. For distinct $v,w\in \VertSet$, the following are equivalent:
    \begin{enumerate}
        \item[(1)] $v\in SwS$;
        \item[(2)] there exists a map $s\colon v\PathSet\rightarrow w\PathSet$ in $S$;
        \item[(3)] there exists a path $p\in v\PathSet$ such that $G\cap \text{Iso}(\range(p),w)$ is nonempty.
    \end{enumerate}
\end{Lemma}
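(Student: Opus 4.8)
The approach is to convert each of the three conditions into a statement about the way the partial bijections comprising $S$ act on $\PathSet$, using the canonical form $s=pgq^{*}$ of a nonzero element together with the description of the idempotents. Recall that $s=pgq^{*}$ acts with domain $q\PathSet$ and image $p\PathSet$ and satisfies $g\in G\cap\text{Iso}(\range q,\range p)$; that the idempotents of $S$ are $E_{(G,\PathSet)}=\{pp^{*}:p\in\PathSet\}\cup\{0\}$, with $pp^{*}$ the identity map on $p\PathSet$; that a vertex $v$ is the idempotent $vv^{*}$, supported on the whole tree $v\PathSet$; and that, for nonzero idempotents, $pp^{*}\le qq^{*}$ in the natural partial order precisely when $q$ is a prefix of $p$, equivalently $p\PathSet\subseteq q\PathSet$.

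I would handle $(2)\Leftrightarrow(3)$ first; it is pure bookkeeping with the canonical form. Given the path $p$ and the groupoid element $g$ of (3), the product $gp^{*}\in S$ is, by the relations of Definition~\ref{Def: Associated Inv SG}, a bijection between $p\PathSet$ and $w\PathSet$, and $p\PathSet\subseteq v\PathSet$ because $p$ starts at $v$; so $gp^{*}$ is a map from $v\PathSet$ to $w\PathSet$ in $S$. Conversely, writing an arbitrary such map $s$ in canonical form $pgq^{*}$ and reading off its domain and image exhibits a path and a groupoid element of exactly the kind required by (3), after normalising with the relations of Definition~\ref{Def: Associated Inv SG} and, where needed, replacing $g$ by $g^{-1}$.

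For $(1)\Leftrightarrow(2)$ the key input is the standard description of the principal ideal generated by an idempotent: in an inverse semigroup with zero, an idempotent $e$ lies in $SfS$ if and only if there is $s$ with $ss^{*}=e$ and $s^{*}s\le f$. The reverse direction is immediate since $SfS$ is downward closed in the natural partial order and $e=ss^{*}=s(s^{*}s)s^{*}\le sfs^{*}\in SfS$; for the forward direction one factors $e=afb$, notes $e=ee=af(bb^{*})fa^{*}$, and takes $s=a(fbb^{*}f)$, for which $ss^{*}=e$ and $s^{*}s\le fbb^{*}f\le f$. Applying this with $e=v$ and $f=w$ and putting the witness $s$ in canonical form, the requirement that an idempotent of the form $ss^{*}$ (or $s^{*}s$) equal the vertex idempotent $v$ pins its support to all of $v\PathSet$, while the requirement $s^{*}s\le w$ (or $ss^{*}\le w$) pins the complementary support inside $w\PathSet$ — and that is exactly the description of a map "$v\PathSet\to w\PathSet$ in $S$" obtained in the previous paragraph, up to inverting $s$. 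So $v\in SwS$ is equivalent to (2).

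The only subtle point — and the one place the argument can go wrong — is keeping straight which of $v\PathSet$ and $w\PathSet$ plays the part of domain and which that of image at each step: passing $v\in SwS$ through the idempotent description forces the \emph{domain} of the witnessing element to be the full tree $v\PathSet$, not its image, so one must be deliberate about whether "a map $v\PathSet\to w\PathSet$ in $S$" is asked to be total on $v\PathSet$ or only defined on a sub-tree of it, and about when to pass between a groupoid element and its inverse. Once the conventions are fixed, each implication is a short computation with the relations of Definition~\ref{Def: Associated Inv SG}.
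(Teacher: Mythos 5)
Your overall strategy --- canonical forms, reading off domains and images, and the standard fact that an idempotent $e$ lies in $SfS$ exactly when some $s$ satisfies $ss^{*}=e$ and $s^{*}s\le f$ --- is sound, and you prove that general fact correctly. But the point you flag as ``the only subtle point'' is a genuine gap, not a convention to be fixed. Your $(3)\Rightarrow(2)$ produces $gp^{*}$ with domain $p\PathSet$, in general a \emph{proper} subtree of $v\PathSet$, and image all of $w\PathSet$; your $(1)\Leftrightarrow(2)$ requires an element whose domain is \emph{all} of $v\PathSet$ and whose image lies inside $w\PathSet$ (this is also what makes the computation $v=s^{*}ws$ work). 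These two classes of elements do not coincide ``up to inverting $s$'': inversion swaps domain and image, so inverting your $gp^{*}$ gives a map total on $w\PathSet$ with image inside $v\PathSet$, i.e.\ a witness that $w\in SvS$, not that $v\in SwS$. The two conditions are genuinely inequivalent: take $\VertSet=\{v,w\}$ with two loops at $v$, two edges from $w$ to $v$, and $G=\{\id_v,\id_w\}$. Then $v=e^{*}we\in SwS$ for either edge $e$ from $w$ to $v$, yet every $p\in v\PathSet$ has $\range(p)=v$ and $G\cap\text{Iso}(v,w)=\emptyset$, so condition (3) as printed fails while condition (1) holds.

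What this shows is that (3) as printed has $v$ and $w$ transposed relative to (1): the paper's own proof of $(1)\Rightarrow(3)$ extracts a path $a\in w\PathSet$ with a groupoid element between $\range(a)\PathSet$ and $v\PathSet$, and its proof of $(1)\Rightarrow(2)$ uses the maximality of $v\PathSet$ among domains $b\PathSet$ to force the witnessing element in a factorisation $v=swt$ to be total on $v\PathSet$. With (2) read as ``total on $v\PathSet$, image contained in $w\PathSet$'' and (3) transposed accordingly, your plan does go through, and your route to $(1)\Leftrightarrow(2)$ via the general principal-ideal fact is a legitimate alternative to the paper's direct factorisation argument. As written, however, the two halves of your proof address two different statements and the chain of equivalences does not close.
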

\begin{proof}
    First, suppose $(2)$. Then $v=s^* w s$ because $w$ acts as the identity on $w\PathSet$. Thus, $(2)\implies (1)$

    Now suppose $(1)$. Then there exists some $s,t\in S$ such that $v=swt$. Observe that
    \[v\PathSet = \dom v = \dom swt \subseteq \dom t. \]
    Suppose $t=agb^*$ with $a,b\in \PathSet$ and $g\in G$. Then $\dom t = b\PathSet$. Since $v\PathSet$ is maximal (with respect to set inclusion) among all sets of the form $q\PathSet$ for any $q\in \PathSet$, we must have that $b=v$, and so $\dom t = v\PathSet$.
    
    Now because $v=swt$ is not the zero map, $\im t$ must intersect $\im w = w\PathSet$. But as $t=agv$, we have $\im t = a\PathSet$, and therefore $a\in w\PathSet$. It follows that $t$ is a map from $v\PathSet$ into $w\PathSet$, thus, $(1)\implies (2)$.

    In $t=agv$ above, observe that $a\in w\PathSet$ and $g: \range(a)\PathSet\rightarrow v\PathSet$. So $(1)\implies (3)$. Finally, $(3)$ clearly implies $(2)$.
\end{proof}
\begin{Proposition}\label{Prop: 0-simple classified}
    Let $(G, \Gamma)$ be a self-similar groupoid and let $S=S_{(G, \Gamma)}$ be the associated inverse semigroup. Then $S$ is 0-simple if and only if for each pair of distinct vertices $v,w \in\VertSet$, there exist $p\in w\PathSet$ and $q\in v\PathSet$ such that $G$ intersects $\text{Iso}(\range(p), v)$ and $\text{Iso}(\range(q), w)$.
\end{Proposition}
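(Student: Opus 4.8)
The plan is to use the standard fact that an inverse semigroup with zero is $0$-simple exactly when $S^2\neq\{0\}$ and every nonzero (two-sided) ideal equals $S$, and to feed this into Lemma \ref{Lem: half of J-relation argument}. Since $vv=v\neq 0$ for every vertex $v$, the condition $S^2\neq\{0\}$ is automatic, so all the content lies in the ideal condition. Moreover, by Lemma \ref{Lem: half of J-relation argument}, for distinct $v,w\in\VertSet$ we have $v\in SwS$ if and only if there is a path $q\in v\PathSet$ with $G\cap\text{Iso}(\range(q),w)\neq\emptyset$, and (interchanging the roles of $v$ and $w$) $w\in SvS$ if and only if there is a path $p\in w\PathSet$ with $G\cap\text{Iso}(\range(p),v)\neq\emptyset$. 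Hence the condition in the statement says precisely that $v\in SwS$ and $w\in SvS$ for every pair of distinct vertices, and it suffices to prove that this is equivalent to $0$-simplicity of $S$.

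For the forward implication, suppose the condition holds and let $I$ be a nonzero ideal. Choose a nonzero $s\in I$ and write it in canonical form $s=pgq^*$ with $p,q\in\PathSet$ and $g\in G\cap\text{Iso}(\range(q),\range(p))$. Exactly as in the proof of Proposition \ref{Prop: SGG always fundamental}, a computation with the defining relations gives $ss^*=pp^*$, so $pp^*\in I$; since $p^*p=\range(p)$, we get $\range(p)=p^*(pp^*)p\in I$, so $I$ contains a vertex $v_0$. For any other vertex $w$ the hypothesis gives $w\in Sv_0S\subseteq I$, hence $\VertSet\subseteq I$. Finally, for any nonzero $t\in S$, writing $t=ahb^*$ in canonical form with $a,b\in\PathSet$ and $h\in G$, we have $\source(a)\,t=t$ with $\source(a)\in\VertSet\subseteq I$, so $t\in I$. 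Therefore $I=S$, and $S$ is $0$-simple.

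For the converse, suppose $S$ is $0$-simple and let $v,w\in\VertSet$ be distinct. The principal ideal $S^1vS^1$ contains $v$, so it is nonzero, hence equal to $S$; in particular $w\in S^1vS^1$, say $w=cvd$ with $c,d\in S^1$. The case $c=d=1$ would force $w=v$, which is excluded; if $c,d\in S$ then $w\in SvS$ directly; and in the remaining cases one absorbs the extra identity using $w^*=w$ and the idempotency of $w$ — for instance, if $c=1$ then $w=vd$ and $w=w^*w=d^*vd\in SvS$, with the case $d=1$ symmetric. So in all cases $w\in SvS$, and Lemma \ref{Lem: half of J-relation argument} yields a path $p\in w\PathSet$ with $G\cap\text{Iso}(\range(p),v)\neq\emptyset$. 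Interchanging $v$ and $w$ gives a path $q\in v\PathSet$ with $G\cap\text{Iso}(\range(q),w)\neq\emptyset$, which is precisely the asserted condition.

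Once Lemma \ref{Lem: half of J-relation argument} is in hand the argument is essentially bookkeeping. The two steps requiring a little care are the extraction of a vertex idempotent from an arbitrary nonzero element of $I$ — handled via the canonical form, as in the proof of Proposition \ref{Prop: SGG always fundamental} — and the elementary upgrade from $S^1vS^1$-membership to $SvS$-membership for idempotents; I expect the principal obstacle to be organizational rather than conceptual.
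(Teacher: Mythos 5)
Your proof is correct and follows essentially the same route as the paper: both arguments reduce $0$-simplicity to the coincidence of the principal ideals generated by vertices (via the canonical form and $ss^*=pp^*$, $p^*p=\range(p)$) and then translate that condition through Lemma \ref{Lem: half of J-relation argument}. You simply make explicit the bookkeeping (extracting a vertex from a nonzero ideal, absorbing the adjoined identity in $S^1vS^1$) that the paper leaves implicit.
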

\begin{proof}
    It is an elementary fact of inverse semigroup theory that every principal ideal may be generated by an idempotent element. In our context, we can in fact generate every nonzero principal ideal by an idempotent in $\VertSet$, that is, a vertex. To see this, let $p\in \PathSet$ so that $pp^*$ and $p^*p$ are idempotents. Observe that $pp^* = pp^*pp^*$ and $p^*p=p^*pp^*p$ and therefore $pp^*$ and $p^*p$ generate the same principal ideals. By the defining relations of $S$, $p^*p=\range(p)\in \VertSet$, and therefore every principal ideal may be generated by an idempotent from $\VertSet$.

    Now, $S$ is 0-simple if and only if $SuS=SvS$ for each $u,w\in \VertSet$. By Lemma $\ref{Lem: half of J-relation argument}$, this occurs if and only if there exists $p\in w\PathSet$ and $q\in v\PathSet$ such that $G$ intersects $\text{Iso}(\range(p), v)$ and $\text{Iso}(\range(q), w)$, establishing the claim.
\end{proof}
Consider the following example which illustrates how both the connectivity of $\Gamma$ and the maps in $G$ play a role in $0-$simplicity.
\begin{Example}
    Let $\Gamma$ be the disjoint union of two directed 2-cycles as shown. In this case, $\text{Iso}(i,j)$ is a singleton $\{f_{i,j}\}$ say, for each $i,j\in \{u,v,x,y\}$. Under this notation, $\textbf{Iso}(\Gamma^*)=\{f_{i,j}: i,j\in \{u,v,x,y\}\}$ Define the groupoid,
    \[G=\{\id_u, \id_v, \id_x, \id_y, f_{u,x}, f_{x,u}, f_{v,y}, f_{y,v}\}.\]
    It is easy to verify that $G$ is a self-similar groupoid.
    \begin{figure}[ht!]
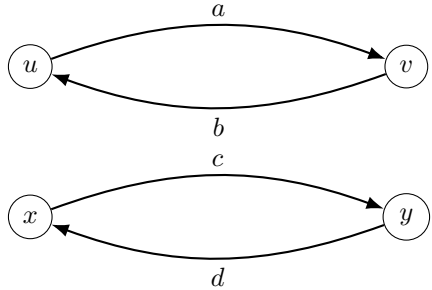

        \centering
        \tikz [>=Latex]{
        \graph [edge quotes={auto}]{ 
  "$u$" [at={(0,1)}, shape=circle, draw] ->[bend left=20,"$a$", thick]  "$v$" [at={(4,1)}, shape=circle, draw];
  "$x$" [at={(0,0)}, shape=circle, draw] ->[bend left=20,"$c$", thick]  "$y$" [at={(4,0)}, shape=circle, draw];
  "$v$" ->[bend left=20, "$b$", thick] "$u$";
  "$y$" ->[bend left=20, "$d$", thick] "$x$";
  
};}
        \caption[A]{The graphs $\Gamma$ and $\PathSet$.}
        \label{fig: double 2-cycle graph}
    \end{figure}
    We may draw a directed graph to represent which elements of $\textbf{Iso}(\PathSet)$ lie in $G$ as shown in Figure \ref{fig: connectivity of double 2 cycle Groupoid}. We are particularly interested in the non-idempotent elements, so the loops are omitted.
    \begin{figure}[ht!]
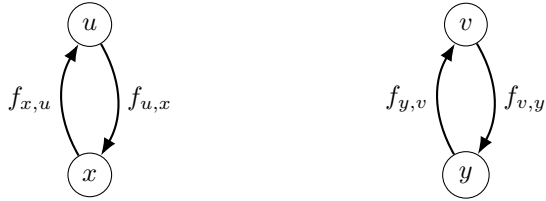

        \centering
        \tikz [>=Latex]{
        \graph [edge quotes={auto}]{ 
  "$u$" [at={(0,1)}, shape=circle, draw] ->[bend left,"$a$", thick, white]  "$v$" [at={(4,1)}, shape=circle, draw];
  "$x$" [at={(0,0)}, shape=circle, draw] ->[bend left,"$c$", thick, white]  "$y$" [at={(4,0)}, shape=circle, draw];
  "$v$" ->[bend left, "$b$", thick, white] "$u$";
  "$y$" ->[bend left, "$d$", thick, white] "$x$";

  "$u$" ->[bend left, "$f_{u,x}$", thick]  "$x$";
  "$v$" ->[bend left, "$f_{v,y}$", thick]  "$y$" ;
  "$x$" ->[bend left, "$f_{x,u}$", thick] "$u$";
  "$y$" ->[bend left, "$f_{y,v}$", thick] "$v$";
  
};}
        \caption[A]{Non-idempotent elements of $G$.}
        \label{fig: connectivity of double 2 cycle Groupoid}
    \end{figure}
    Now consider the inverse semigroup $S=S_{(G, \Gamma)}$ associated to the self-similar groupoid $(G, \Gamma)$. By direct calculation, we may verify that the vertices $u,v,x,y$ generate the same principal ideals of $S$. Observe that,
    \begin{align*}
        v&=a^*ua,\\
        u&=b^*vb,\\
        y&=c^*xc,\\
        x&=d^*yd,\\
    \end{align*}
    and therefore $SuS=SvS$ and $SxS=SyS$. Further observe that,
    \begin{align*}
        u&=f_{x,u}xf_{u,x},\\
        x&=f_{u,x}uf_{x,u},\\
        v&=f_{y,v}yf_{v,y},\\
        y&=f_{v,y}vf_{y,v},\\
    \end{align*}
    and therefore $SuS=SxS$ and $SvS=SyS$. As discussed in the proof of Proposition \ref{Prop: 0-simple classified}, the 0-simplicity of $S$ depends only on the principal ideals of the vertices of $\Gamma^0$, and therefore $S$ is 0-simple.
\end{Example}
\begin{Proposition}\label{Prop: SGG 0disjunctive iff Gamma out-degree is never 1}
    Let $(G, \Gamma)$ be a self-similar groupoid and let $S_{(G, \Gamma)}$ be the associated inverse semigroup. The semilattice $E_{(G, \Gamma)}$ is 0-disjunctive if and only if $|\source^{-1}(v)|\neq 1$ for all $v\in \VertSet$.
\end{Proposition}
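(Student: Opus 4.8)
The plan is to work entirely inside the semilattice of idempotents $E_{(G,\PathSet)}=\{pp^*:p\in\PathSet\}\cup\{0\}$; the groupoid $G$ plays no role. First I would record the two facts that make everything mechanical: from relations (1)--(4) of Definition~\ref{Def: Associated Inv SG}, for $p,q\in\PathSet$ the product $pp^*\,qq^*$ equals $pp^*$ if $q$ is a prefix of $p$, equals $qq^*$ if $p$ is a prefix of $q$, and equals $0$ if $p$ and $q$ are incomparable in the prefix order; in particular $pp^*\le qq^*$ exactly when $q$ is a prefix of $p$, and $p\mapsto pp^*$ is injective on $\PathSet$. So $0$-disjunctivity becomes a purely combinatorial statement about the forest $\PathSet$.

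For the implication ``$|\source^{-1}(v)|\neq1$ for all $v$ $\Rightarrow$ $E_{(G,\PathSet)}$ is $0$-disjunctive'', take distinct nonzero idempotents $e=pp^*$ and $f=qq^*$ (the cases involving $0$ are trivial). If $p,q$ are incomparable, then $ef=0$ and $h=pp^*$ separates them, since $eh=e\neq0$ while $fh=0$. Otherwise, after possibly swapping $e$ and $f$, the path $q$ is a proper prefix of $p$, say $p=qe_1\cdots e_k$ with $k\ge1$. This is the only point where the hypothesis is used: $e_1$ lies in $\source^{-1}(\range(q))$, so that edge set has at least two elements, and picking $f_1\neq e_1$ in it and setting $r=qf_1$, $h=rr^*$, relation (3) gives $f_1^*e_1=0$, hence $he=rr^*pp^*=0$; meanwhile $r$ extends $q$, so $hf=rr^*qq^*=rr^*\neq0$. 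Thus exactly one of $he,hf$ vanishes.

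For the converse I would argue by contraposition: suppose some $v\in\VertSet$ has a unique outgoing edge $e$, and test $0$-disjunctivity on the pair $f=vv^*$ and $e'=ee^*$, which satisfies $e'<f$ strictly. The key observation --- and the one mildly delicate step of the whole argument --- is that because $e$ is the only edge out of $v$, every nonempty path extending $v$ must begin with $e$. Consequently, for an arbitrary nonzero $h=rr^*$: if $r$ is incomparable with $v$ then $r$ is also incomparable with $e$, so $hf=he'=0$; and if $r$ equals $v$ or extends $v$, then (in the latter case $r$ extends $e$, so) $he'\neq0$ and also $hf\neq0$. In every case it is impossible for exactly one of $hf,he'$ to be zero, so $E_{(G,\PathSet)}$ fails to be $0$-disjunctive, completing the contrapositive. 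Apart from this observation, the argument is just bookkeeping with the relations of Definition~\ref{Def: Associated Inv SG}.
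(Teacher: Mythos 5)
Your proof is correct and follows essentially the same route as the paper: for the positive direction you separate a proper prefix pair by branching off along a second edge at the branching vertex (the paper's $h=(pa)(pa)^*$ is your $h=(qf_1)(qf_1)^*$ with the roles of the two paths swapped), and for the converse you use the same witnessing pair $v$ and $ee^*$ and the same observation that every path out of $v$ must begin with the unique edge $e$. No gaps.
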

\begin{proof}
    First, suppose that $|\source^{-1}(v)|\neq 1$ for all $v\in \VertSet$. Let $pp^*$ and $qq^*$ be distinct nonzero idempotents of $S_G$ with $p,q\in \PathSet$. Without loss of generality,  either $p$ is a prefix of $q$, or neither $p$ nor $q$ is a prefix of the other.
    
    First, suppose that $p$ is a prefix of $q$. Because $p\neq q$, we may write $q=pw$ for some $w\in \PathSet\setminus\VertSet$, and so the out-degree of $\range(p)$ must not be zero. Let $w_1$ be the first edge of $w$. By the assumption on out-degree, there is some other edge $a$ in $\range(p)\EdgeSet$ in addition to $w_1$. Consider the idempotent $(pa)(pa)^*$. Clearly $pp^*(pa)(pa)^*=(pa)(pa)^*\neq 0$. On the other hand,
    \begin{align*}
        qq^*(pa)(pa)^* &= (pw)(pw)^*(pa)(pa)^*\\
        &= pww^*p^*paa^*p^*\\
        &= pp^*pww^*aa^*p^*\\
        &= pww^*w_1w_1^*aa^*p^*\\
        &=0
    \end{align*}
    because $w_1\neq a$.

    Now suppose neither $p$ nor $q$ is a prefix of the other.
    Then $pp^*pp^* = pp^*$ but $qq^*pp^*=0$, because $q^*p=0$ by relations 1., 3., and 4.\ of Definition \ref{Def: Associated Inv SG}. This establishes the reverse implication.

    We prove the forward implication by contrapositive. Suppose that there exists some $v\in \VertSet$ such that $|\source^{-1}(v)|=1$. Let $e$ be the unique edge leaving $v$, so $v$ and $ee^*$ are two distinct idempotents in $E_{(G, \Gamma)}$. Let $q\in \PathSet$. Observe that $qq^*=\source(q)qq^*$ and so $vqq^*=0$ if and only if $\source(q)\neq v$. Similarly $ee^*qq^* = ee^*\range(e^*)\source(q)qq^* = ee^*v\source(q)qq^*$ and so $ee^*qq^*=0$ if and only if $\source(q)\neq v$. Therefore there is no idempotent $h$ for which exactly one of the products $vh$ and $ee^*h$ is zero, and so the semilattice $E_{(G, \Gamma)}$ is not 0-disjunctive, completing the proof.
\end{proof}
\begin{proof}[Proof of Theorem \ref{Thm: CongruenceFreeAppliedS_G}.]
Direct application of Theorem \ref{Thm: CongruenceFreeConditions} along with Propositions \ref{Prop: SGG always fundamental}, \ref{Prop: 0-simple classified}, and \ref{Prop: SGG 0disjunctive iff Gamma out-degree is never 1} establishes the result.
\end{proof}
Up to this point, our only assumption regarding $\Gamma$ is row-finiteness. To avoid trivialities, it is reasonable to demand that $\Gamma$ contains at least one edge. Under this assumption, it follows from Lemma \ref{Lem: half of J-relation argument} and Proposition \ref{Prop: 0-simple classified} that every vertex of $\Gamma$ must emit at least one edge in order for the associated inverse semigroup to be 0-simple. With this in mind, the following corollary to Theorem \ref{Thm: CongruenceFreeAppliedS_G} is immediate.
\begin{Corollary}\label{Cor: CongruenceFreeAppliedS_G with nonempty graph}
    Let $(G, \Gamma)$ be a self-similar groupoid with $\Gamma^1$ nonempty and let $S_{(G, \Gamma)}$ be the associated inverse semigroup. Then $S_{(G, \Gamma)}$ is congruence-free if and only if the following hold:
    \begin{enumerate}
        \item  $|\source^{-1}(v)|> 1$ for all $v\in \VertSet$,
        \item  for each pair of distinct vertices $v,w \in\VertSet$, there exists $p\in w\PathSet$ and $q\in v\PathSet$ such that $G$ intersects $\text{Iso}(\range(p), v)$ and $\text{Iso}(\range(q), w)$.
    \end{enumerate}
\end{Corollary}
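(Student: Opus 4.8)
The plan is to obtain the Corollary as a near-immediate consequence of Theorem \ref{Thm: CongruenceFreeAppliedS_G}. The only difference between the two statements is that condition (1) is sharpened from $|\source^{-1}(v)|\neq 1$ to $|\source^{-1}(v)|>1$, so the entire content is ruling out the case $|\source^{-1}(v)|=0$ under the hypothesis that $\Gamma^1$ is nonempty. First I would dispatch the easy direction: if conditions (1) and (2) of the Corollary hold, then in particular $|\source^{-1}(v)|\neq 1$ for every $v$, so Theorem \ref{Thm: CongruenceFreeAppliedS_G} gives that $S_{(G,\PathSet)}$ is congruence-free.

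For the converse, suppose $S_{(G,\PathSet)}$ is congruence-free. By Theorem \ref{Thm: CongruenceFreeConditions} it is $0$-simple, and by Theorem \ref{Thm: CongruenceFreeAppliedS_G} it already satisfies condition (2) together with $|\source^{-1}(v)|\neq 1$ for all $v$; what remains is to show that no vertex is a sink. I would argue by contradiction: if $v\in\VertSet$ emits no edge, then $v\PathSet=\{v\}$, while the hypothesis that $\Gamma^1$ is nonempty supplies an edge $e$ whose source $u:=\source(e)$ emits an edge, so $u\neq v$ and $u\PathSet$ has more than one vertex. Feeding the distinct pair $v,u$ into Proposition \ref{Prop: 0-simple classified} (or Lemma \ref{Lem: half of J-relation argument}) would demand a path $q\in v\PathSet$ with $G\cap\text{Iso}(\range(q),u)$ nonempty; but $v\PathSet=\{v\}$ forces $q=v$ and $\range(q)=v$, and there is no graph isomorphism from the one-vertex tree $v\PathSet$ onto the strictly larger tree $u\PathSet$, so $\text{Iso}(v,u)=\emptyset$. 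This contradicts $0$-simplicity. Hence every vertex emits at least one edge, and combined with $|\source^{-1}(v)|\neq 1$ this gives $|\source^{-1}(v)|>1$ for all $v$, completing the proof.

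I do not expect a genuine obstacle; the whole thing is a short deduction from results already established, and the paragraph preceding the Corollary essentially records the key step. The one point deserving attention is that Proposition \ref{Prop: 0-simple classified} and Lemma \ref{Lem: half of J-relation argument} are stated only for distinct pairs of vertices, so the hypothesis that $\Gamma^1$ is nonempty must first be used to produce a vertex $u\neq v$ emitting an edge before those results can be invoked.
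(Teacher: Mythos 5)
Your proof is correct and follows essentially the same route as the paper, which records the key step in the paragraph preceding the Corollary: under the hypothesis that $\Gamma^1$ is nonempty, Lemma \ref{Lem: half of J-relation argument} and Proposition \ref{Prop: 0-simple classified} force every vertex to emit an edge, since a sink $v$ would have $v\PathSet=\{v\}$ and hence $\text{Iso}(v,u)=\emptyset$ for any non-sink $u$, contradicting $0$-simplicity. Your attention to picking the correct half of condition (2) (the one involving $q\in v\PathSet$) is exactly the right detail to check.
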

We call any self-similar groupoid which satisfies the conditions 1.\ and 2.\ of Corollary \ref{Cor: CongruenceFreeAppliedS_G with nonempty graph} a self-similar groupoid of type (CF).
\section{The tight and singular ideals of \texorpdfstring{$\KS$}{KS}}\label{Sec: tight and singular ideals}
Let $S$ be an inverse semigroup with zero and let $E(S)$ be the semilattice of idempotents of $S$. It is standard to consider the natural partial order on $E(S)$ given by $e\leq f$ if and only if $e=gf$ for some $g\in E(S)$. For a nonzero idempotent $e\in E(S)$, we say that $F \subseteq E(S)$ covers $e$ if $f \leq e$ for all $f \in F$, and for all nonzero $g \leq e$, there exists some $f \in F$ such that $gf \neq 0$. Let $KS$ be the contracted $K$-algebra of $S$. The tight ideal of $KS$, denoted by $\TightIdeal$, is the ideal generated by
all products of the form
\[\prod_{f\in F}(e-f),\]
where $e \in E(S)$, and $F \subseteq E(S)$ is a finite cover of $e$. See \cite{Steinbergandnora2020simplicityinversesemigroupetale} for details.
    
The so-called \textit{Cuntz-Krieger} relations were introduced in \cite{Cuntz1980ACO} and have since been widely used to define various graph algebras and related objects, see for example \cite{Leavitt, HazratPaskSimsSiera, SSGroupoidsWhitaker, Nekrashevych2009Alg}. For a self-similar groupoid $(G, \Gamma)$, these relations are used to derive an ideal of $\KS$ called the Cuntz-Krieger ideal and denoted here by $\CKIdeal$. The ideal $\CKIdeal$ is generated by the elements
    \[v-\sum_{e\in v\EdgeSet}ee^*,\]
    for each $v\in\VertSet$. We show in Proposition \ref{Prop: tight hands on descriptions} that the Cuntz-Krieger ideal is exactly the tight ideal. To show that the Cuntz-Krieger ideal and the tight ideal coincide, we need the following standard observation about the Cuntz-Krieger ideal with proof included for completeness.
\begin{Proposition}\label{Prop: induction on CKideal generators}
    Let $(G, \Gamma)$ be a self-similar groupoid. For any vertex $v\in\VertSet$ and any natural number $n$, the Cuntz-Krieger ideal contains ${v-\sum_{p\in v\Gamma^n}pp^*}$.
\end{Proposition}
\begin{proof}
    Let $v\in\VertSet$. For every integer $n\geq 0$, let $R_{v}^n=\{\range(p): p\in v\Gamma^n \}$ be the vertices reachable from $v$ using a path of length $n$. This set is finite because $\Gamma$ is row-finite. Observe that $\CKIdeal$ contains
    \[\sum_{p\in v\Gamma^n}p\left(\sum_{u\in R_{v}^n}\left(u-\sum_{e\in u\EdgeSet}ee^*\right)\right)\sum_{p\in R_v^{n}}p^* = \sum_{p\in v\Gamma^n}pp^*-\sum_{q\in v\Gamma^{n+1}}qq^*.\]
    Denote the right side of the above equation by $a_n$. It follows that $\CKIdeal$ contains,
    \[\sum_{n=0}^{n-1}a_n = v-\sum_{p\in v\Gamma^n}pp^*.\]
\end{proof}
Fix a row-finite directed graph $\Gamma$ and a self-similar groupoid $(G, \Gamma)$, and continue to denote the associated inverse semigroup by $S_{(G, \Gamma)}$ and its semilattice of idempotents by $E_{(G, \Gamma)}$. We now obtain a practical description of the tight ideal of the contracted inverse semigroup algebra of a self-similar groupoid.
\begin{Proposition}\label{Prop: tight hands on descriptions}
	Let $a\in\KS$. The following are equivalent:
	\begin{enumerate}
		\item[(1)] $a\in \CKIdeal$;
		\item[(2)] $a \in \TightIdeal$;
		\item[(3)] every $\mathfrak{p} \in \Gamma^{\omega}$ has a prefix $p \in \PathSet$ satisfying $ap = 0$;
        \item[(4)] for all $v\in \VertSet$, there exists some $n$ such that $av\Gamma^n=0$.
	\end{enumerate}
\end{Proposition}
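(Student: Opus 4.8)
The plan is to prove the cycle of implications $(1) \Rightarrow (4) \Rightarrow (3) \Rightarrow (1)$ together with $(1) \Leftrightarrow (2)$, drawing on the structural facts already established. First I would dispatch $(1) \Rightarrow (2)$ and its converse by identifying the generators of $\CKIdeal$ with tight relations: for a vertex $v$ with $v\EdgeSet = \{e_1,\dots,e_k\}$, the finite set $F = \{e_ie_i^* : 1 \le i \le k\}$ is a cover of $v$ in $E_{(G,\PathSet)}$ (any nonzero $g \le v$ is of the form $pp^*$ with $\source(p) = v$, and $pp^*$ meets some $e_ie_i^*$), and since the $e_ie_i^*$ are pairwise orthogonal, $\prod_{f \in F}(v - f) = v - \sum_i e_ie_i^*$. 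Conversely one must check that for an arbitrary cover $F$ of an idempotent $e = pp^*$, the product $\prod_{f\in F}(e-f)$ already lies in $\CKIdeal$; here the key point is that finite covers in the semilattice of a graph inverse semigroup are, after passing to a common refinement via Proposition \ref{Prop: induction on CKideal generators}, refined by sets of the form $\{qq^* : q \in p\Gamma^n\}$ for suitable $n$ (using row-finiteness and that $F$ is finite), and telescoping over such levels expresses the tight generator in terms of the $a_n$ from that proof.

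Next, $(1) \Rightarrow (4)$: write $a \in \CKIdeal$ as a finite sum $a = \sum_j s_j c_j t_j$ with $c_j = v_j - \sum_{e \in v_j\EdgeSet} e_j e_j^*$ a generator and $s_j, t_j \in KS$. Each basis element $pgq^*$ appearing in $a$ has $q^* v \Gamma^n$ equal to either $0$ or a sum of terms $pg'q'^*$ with $|q'|$ growing; more usefully, one pushes the right-multiplication by $v\Gamma^n$ inside and uses Proposition \ref{Prop: induction on CKideal generators} to rewrite $v = \sum_{p \in v\Gamma^m} pp^*$ modulo $\CKIdeal$ for $m$ large. The cleanest route is: fix $v$, choose $n$ larger than the length of any path occurring in any $q$-part of any term of $a$ and also large enough that every $v_j$ reachable appears; then by the orthogonality relations (3) and (4) of Definition \ref{Def: Associated Inv SG}, right-multiplying the generator $c_j$ by a sufficiently deep $\sum_{p \in u\Gamma^n} pp^*$ annihilates it, because $\left(v_j - \sum_{e\in v_j\EdgeSet}e_je_j^*\right)\sum_{p\in v_j\Gamma^n}pp^* = 0$ for $n \ge 1$. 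Propagating this through $s_j c_j t_j$ and collecting a common $n$ over the finitely many $j$ (using row-finiteness to keep $v\Gamma^n$ finite) gives $av\Gamma^n = 0$.

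Then $(4) \Rightarrow (3)$ is immediate: given $\mathfrak{p} \in \Gamma^\omega$ with $\source(\mathfrak{p}) = v$, take the $n$ from (4) and let $p$ be the length-$n$ prefix of $\mathfrak{p}$; since $p \in v\Gamma^n$ and $ap$ is one of the terms in $av\Gamma^n = \sum_{q \in v\Gamma^n} aq = 0$, and distinct basis elements $aq$ cannot cancel (they have distinct $q$-parts, hence are linearly independent in $KS$), we get $ap = 0$. Finally $(3) \Rightarrow (1)$ is the compactness step: for each vertex $v$, the set of prefixes $p$ with $ap = 0$ defines an open cover of the (compact, by row-finiteness, in the path-space topology) set of infinite paths from $v$; extract a finite subcover, take $n$ to be the maximum prefix length, so $aq = 0$ for every $q \in v\Gamma^n$ that extends to an infinite path — and for the finitely many $q \in v\Gamma^n$ that do not extend (dead ends), one argues separately or absorbs them using that $a(v - \sum_{q\in v\Gamma^n}qq^*)$ is already handled. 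Then $av = a\sum_{q\in v\Gamma^n}qq^* + a(v - \sum_{q}qq^*)$; the first sum vanishes and the second lies in $\CKIdeal$ by Proposition \ref{Prop: induction on CKideal generators}, so $av \in \CKIdeal$ for all $v$, whence $a = a\sum_v v \in \CKIdeal$ since $\Gamma^0$ is the unit (or an approximate unit). The main obstacle I anticipate is the $(1)\Leftrightarrow(2)$ equivalence — specifically, showing an arbitrary finite cover is refined by a level set $p\Gamma^n$ and that the telescoping bookkeeping genuinely lands the tight generator inside $\CKIdeal$ — together with the care needed around vertices that emit edges but from which no infinite path emanates (which cannot happen once $|\source^{-1}(v)| \ge 1$ and row-finiteness hold, but must be noted).
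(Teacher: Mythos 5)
Your proposal is correct in substance but organizes the cycle differently from the paper, and the reorganization shifts where the work lands. The paper proves $(2)\Rightarrow(3)\Rightarrow(4)\Rightarrow(1)\Rightarrow(2)$, so the only direct comparison between $\CKIdeal$ and $\TightIdeal$ it ever needs is the easy one (a Cuntz--Krieger generator $v-\sum_{e\in v\EdgeSet}ee^*$ factors as $\prod_e(v-ee^*)$ over the cover $\{ee^*\}$, hence is a tight generator); the hard containment $\TightIdeal\subseteq\CKIdeal$ comes for free by going around the cycle. Your route instead attacks $(2)\Rightarrow(1)$ head-on, which forces the cover-refinement argument you flag as the main obstacle. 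That argument does go through: with $n=\max\{|q|:qq^*\in F\}$, every $ww^*$ with $w\in p\PathSet$, $|w|=n$ satisfies $ww^*\leq qq^*$ for some $qq^*\in F$ (a nonzero product of $ww^*$ and $qq^*$ forces $q$ to be a prefix of $w$), so $\prod_{f\in F}(pp^*-f)$ annihilates $\sum_w ww^*$, and writing $\prod_f(pp^*-f)=\prod_f(pp^*-f)\bigl(pp^*-\sum_w ww^*\bigr)$ lands you in $\CKIdeal$ via Proposition \ref{Prop: induction on CKideal generators}. The paper's $(2)\Rightarrow(3)$ step performs exactly this combinatorial computation but packages it as an annihilation statement ($\prod_f(pp^*-f)w=0$ for level-$n$ prefixes $w$ of infinite paths) rather than as an ideal membership, which is why it never needs the telescoping. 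Similarly, your compactness route for $(3)\Rightarrow(1)$ must separately absorb length-$n$ paths that do not extend to infinite paths; this is fixable (if $\range(q)\Gamma^m=\emptyset$ then $\range(q)\in\CKIdeal$ by Proposition \ref{Prop: induction on CKideal generators}, so $aqq^*\in\CKIdeal$), but the paper's K\H{o}nig-lemma contrapositive for $(3)\Rightarrow(4)$ handles dead ends invisibly. One small misreading: the condition $av\Gamma^n=0$ in $(4)$ means $ap=0$ for every $p\in v\Gamma^n$ individually, not that a sum over $v\Gamma^n$ vanishes, so your worry about cancellation in $(4)\Rightarrow(3)$ is moot; you reach the right conclusion anyway. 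Net: your version is completable and slightly more self-contained on the $\CKIdeal$--$\TightIdeal$ relationship, while the paper's ordering is the more economical proof.
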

\begin{proof}
	We prove equivalence in the following order: $(2)\implies (3)\implies (4) \implies (1) \implies (2)$
    
    $(2)\implies (3)$: We show that elements satisfying $(3)$ form an ideal containing $\TightIdeal$. Let $a,b\in\KS$ satisfy (3) and let $\mathfrak{w}\in\Gamma^\omega$. By assumption, there exist prefixes $w_a$ and $w_b$ of $\mathfrak{w}$ such that $aw_a=0=bw_b$. If $w$ is the longer of the two prefixes, clearly $(a+b)w=0$. For any $s\in S_{(G, \Gamma)}$, we have $saw=0$, so elements satisfying (3) form a left ideal. 
    
    Now observe that the action of $S_{(G, \Gamma)}$ on $\PathSet$ naturally extends to an action on $\Gamma^\omega$. Suppose $s$ maps $\mathfrak{w}$ to $\mathfrak{w'}$. By assumption, there exists a prefix $z'$ of $\mathfrak{w'}$ such that $az'=0$. Let $z$ be the prefix of $\mathfrak{w}$ that $s$ maps to $z'$. Then there is some $s'\in S_{(G, \Gamma)}$ such that $asz=az's'=0$. Thus, elements satisfying (3) form a two-sided ideal.
    
    Consider an arbitrary generator of $\TightIdeal$, 
    \[\prod_{qq^*\in F}(pp^*-qq^*),\]
    where $p \in \PathSet$, and $F \subseteq E_{(G, \Gamma)}$ is a finite cover of $pp^*$. Observe that by the definition of a cover, $qq^*\leq pp^*$ and consequently $p$ is a prefix of $q$ for each $qq^*\in F$. Let $n=\max\{|q|: qq^*\in F\}$ and let $w$ be the length $n$ prefix of some $\mathfrak{w}\in \Gamma^\omega$. If $w\notin p\PathSet$, then $pp^*w=0$ and so $qq^*w = qq^*pp^*w=0$ for all $qq^*\in F$. If $w\in p\PathSet$, then $ww^*\leq pp^*$ so $qq^*w\neq 0$ for some $qq^*\in F$. By assumption, $|w|\geq |q|$, and so $w\in q\PathSet$. It follows that,
    \[(pp^*-qq^*)w=pp^*w-qq^*w=w-w=0.\]
    In each case, $\prod_{qq^*\in F}(pp^*-qq^*)w=0$ and so the generators of $\TightIdeal$ satisfy (3) yielding $(2)\implies (3)$.

    $(3)\implies (4)$: By contrapositive, suppose that (4) fails for an element $a\in\KS$, and consequently, there exists some $v\in\VertSet$ with infinitely many $p\in v\PathSet$ such that $ap\neq 0$. Let $T$ denote the subgraph of $v\PathSet$ containing only those vertices $p$ such that $ap\neq 0$, and edges between such vertices. Since $ap\neq 0$ implies that $aq\neq 0$ for any prefix $q$ of $p$, we have $T$ is an infinite subtree of $v\PathSet$ containing the root $v$. Since $\Gamma$ is row-finite, it follows from K\H onig's lemma that $T$ contains an infinite path $\mathfrak{q}$ such that $aq\neq 0$ for all prefixes $q$ of $\mathfrak{q}$, and so $(3)$ fails. Therefore by contrapositive, $(3) \implies (4)$.
    
    $(4)\implies (1)$: Now suppose $a\in \KS$ satisfies (4). Because $a$ has finite support on the basis of $\KS$, there exist finitely many vertices $v$ such that $av\neq 0$. Let $V_a$ be this set of vertices. For each $v\in V_a$, let $n_v$ be the integer such that $ap=0$ for all $p\in v\Gamma^{n_v}$.
    
    By Proposition \ref{Prop: induction on CKideal generators}, the Cuntz-Krieger ideal $\CKIdeal$ contains the element
    \[v-\sum_{p\in\Gamma^{n_v}}pp^*,\]
    for all $v\in V_a$, and therefore $\CKIdeal$ also contains the sum of these elements. It follows that $\CKIdeal$ also contains,
    \[a\left(\sum_{v\in V_a}\left(v-\sum_{p\in\Gamma^{n_v}}pp^*\right)\right)=a.\]
    Thus, $(4)\implies (1)$.
    
    $(1)\implies (2)$: It follows from relations 1.\ and 3.\ of Definition \ref{Def: Associated Inv SG} that the generators of $\CKIdeal$ may be factored,
    \[v-\sum_{e\in v\EdgeSet}ee^* = \prod_{e\in v\EdgeSet}(v-ee^*).\]
    Since $\{ee^*: e\in v\EdgeSet\}$ is a finite cover for $v$, this factorization is a generator of $\TightIdeal$, verifying that $(1)\implies (2)$. This completes the proof.
    \end{proof}
For a congruence-free inverse semigroup with zero $S$, another ideal of $KS$ was introduced in \cite{Steinbergandnora2020simplicityinversesemigroupetale}. An element $a \in KS$
is \textit{singular} if, for all nonzero $e\in E(S)$, there exists a nonzero $f \leq e$ with $af = 0$. It is shown that the singular elements form a two-sided ideal containing $\TightIdeal$ called the \textit{singular ideal}. This ideal has also been called the essential ideal in \cite{ExelEssential_2022, gardella2025simplicitycalgebrascontractingselfsimilar}, for example. Denote the singular ideal by $\SingularIdeal$. The following theorem is one of the main results of \cite{Steinbergandnora2020simplicityinversesemigroupetale}.
\begin{Theorem}[{\cite[Theorem 3.9]{Steinbergandnora2020simplicityinversesemigroupetale}}]\label{Thm: singular unique maximal containing tight} Let $S$ be a congruence-free inverse semigroup with zero and $K$ a field. Then $\SingularIdeal$
is the unique maximal ideal of $KS$ containing $\TightIdeal$. In particular, $\faktor{KS}{\TightIdeal}$ is simple
if and only if $\TightIdeal=\SingularIdeal$.
\end{Theorem}
We remark that the fact that the inverse semigroup is congruence-free is required for the uniqueness and simplicity hypotheses in the theorem above, as any congruence of the inverse semigroup would give rise to an ideal of the algebra. Without the assumption that $S$ is congruence-free, the fact that $\SingularIdeal$ contains $\TightIdeal$ remains. The natural partial order on $E_{(G, \Gamma)}$ applies to attain the following description of the elements of the singular ideal.
\begin{Proposition}\label{Prop: singular hands on description}
    Let $a\in \KS$. Then $a$ is singular if and only if for all $p\in \PathSet$, there exists some $q\in\range(p)\PathSet$, such that $ apq=0$.
\end{Proposition}
\begin{proof}
    Observe that for $p,q\in \PathSet$, we have $qq^*\leq pp^*$ if and only if $q\in p\PathSet$. Suppose $a$ is singular. Then for all $p\in \PathSet$, there exists some $q\in \range(p)\PathSet$ such that $a(pq)(pq)^*=0$. It quickly follows that $a(pq)(pq)^*=0$ if and only if $apq=0$.
\end{proof}
Item (3) of Proposition \ref{Prop: tight hands on descriptions} and Proposition \ref{Prop: singular hands on description} provide an alternative proof that $\TightIdeal\subseteq\SingularIdeal$. We are interested in determining when $\TightIdeal\neq\SingularIdeal$.

\section{Collapsing self-similar groupoids}\label{Sec: Collapsing Groupoids}
Fix a self-similar groupoid $(G, \Gamma)$. It turns out that if there are distinct $u,v\in\VertSet$ such that $G\cap\text{Iso}(u,v)\neq \emptyset$, then we may study a quotient of $G$ acting on a corresponding quotient of $\PathSet$ in order to determine whether $\TightIdeal$ and $\SingularIdeal$ coincide in the original algebra $\KS$. For the groupoid quotient, we collapse $G$ onto its categorical skeleton. Because we regard $G$ as a subgroupoid of $\textbf{Iso}(\PathSet)$, the choice of quotient of each orbit of $G$ determines the quotient of $\Gamma$. The two quotients are specified as follows.

For a vertex $v\in \VertSet$, the orbit of $G$ containing $\id_v$ is the subgroupoid,
\[G\id_v G = \{a\id_v b: a,b\in G\}.\]
Choose a set of orbit representatives $R\subseteq\VertSet$ such that,
 \[G=\bigsqcup_{v\in R}G\id_v G.\]

For each $v\in R$ and each $\id_u$ in $G\id_v G$ with $u\neq v$ choose some $f_u\in G$ such that $\id_u={f}^{-1}_u\id_v f_u$. Let $F_v=\{\id_v, \id_u, f_u, f_u^{-1}: \id_u\in G\id_v G\}$ and $F=\bigsqcup_{v\in R}F_v$. We have constructed $F$ to be a subgroupoid of $G$ with the same set of objects as $G$ and such that $F\cap\text{Iso}(u,v)$ is a singleton whenever $G\cap\text{Iso}(u,v)$ is nonempty. By choosing $R$ and $F$, we are choosing exactly how $G$ will quotient onto its categorical skeleton.

Now define the quotient map $\alpha: G\rightarrow \faktor{G}{\psi_F}$, where $\psi_F$ is the least congruence of $G$ which restricts to the universal congruence on $F_v$ for each $v\in R$, and let $L=\alpha(G)$. Indeed, the $\psi_F$-equivalence class of $F_v$ is exactly $F_v$ for each $v\in R$. Viewing $G$ in $\textbf{Iso}(\PathSet)$, we extend $\alpha$ to all of $\textbf{Iso}(\PathSet)$. Let us construct the corresponding quotient of $\Gamma$ determined by $R$ and $F$.

We say that paths $p, q\in \PathSet$ are $F$-equivalent, denoted by $p\sim_F q$, if there exists some $f\in F$ such that $f(p)=q$. It is easy to see that $p\sim_F q$ implies $\source(p)\sim_F \source(q)$. The equivalence relation $\sim_F$ induces a directed graph homomorphism $\beta$ which maps $\PathSet\rightarrow \faktor{\PathSet}{\sim_F}$. Observe that for a given $p\in \PathSet$, the paths in the $\sim_F$-equivalence class of $p$ are in bijection with the vertices in the $\sim_F$-equivalence class of $\source(p)$. Importantly, this implies that $\beta$ restricts to a bijection on each tree $v\PathSet$, that is, $\beta$ is locally bijective. The restriction of $\beta$ to $\VertSet\cup\EdgeSet$ induces a graph homomorphism on $\Gamma$ whose image we denote $\Lambda=\beta(\Gamma)$.

Because of the local bijectivity of $\beta$, the image $\beta(\PathSet)$ may be identified with $\Lambda^*$, though there is not a canonical choice of identification. It suffices to define this identification on $\beta(v\PathSet)$ for each $v\in R$. Because $\alpha$ quotients both $G$ and $\textbf{Iso}(\PathSet)$ by $F$, and $\beta$ quotients $\PathSet$ by the action of $F$, we now see that $\alpha(\textbf{Iso}(\PathSet))=\textbf{Iso}(\beta{\PathSet})$. Thus, by identifying $\beta{\PathSet}$ with $\Lambda^*$, we have identified $\alpha(\textbf{Iso}(\PathSet))$ with $\textbf{Iso}(\Lambda^*)$. Now $L=\alpha(G)$ sits inside $\textbf{Iso}(\Lambda^*)$, and thus comes equipped with an action on $\Lambda^*$.

\begin{Proposition}\label{Prop: L,Lambda is self-similar}
The action of $L$ on $\Lambda^*$ is a self-similar action.
\end{Proposition}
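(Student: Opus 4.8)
The plan is to verify that $(L,\Lambda^*)$ satisfies Definition \ref{Def: self-similar groupoid}. Three of the ingredients come essentially for free from the construction: since $\beta$ is locally bijective, $\Lambda$ inherits row-finiteness from $\Gamma$ and $\beta$ restricts to a graph isomorphism $v\PathSet\to\beta(v)\Lambda^*$ for every $v\in\VertSet$; since $L=\alpha(G)$ is the image of a groupoid homomorphism it is a groupoid with object set $\Lambda^0$ (after identifying $\Lambda^0\cong R$ via $\beta|_R$); and since the action of $L$ on $\Lambda^*$ is the inclusion $L\hookrightarrow\textbf{Iso}(\Lambda^*)$, it is injective and carries $L^0$ bijectively onto the identities of $\textbf{Iso}(\Lambda^*)$. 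So the real content is the restriction condition, which I would establish by lifting to $(G,\PathSet)$, invoking its self-similarity, and pushing the outcome back down along $\beta$ and $\alpha$.

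Two features of the construction are used. First, $\beta\colon\PathSet\to\Lambda^*$ is a locally bijective graph homomorphism that respects path concatenation and, for each edge $\tilde e$, restricts to a bijection $\range(\tilde e)\PathSet\to\range(\beta(\tilde e))\Lambda^*$. Second, the extension of $\alpha$ from $G$ to $\textbf{Iso}(\PathSet)$ was set up so that $\beta$ is equivariant, i.e.\ $\beta(\phi(\tilde q))=\alpha(\phi)(\beta(\tilde q))$ for all $\phi\in\textbf{Iso}(\PathSet)$ and $\tilde q\in\dom\phi$; this is precisely the content of the identifications $\beta(\PathSet)=\Lambda^*$ and $\alpha(\textbf{Iso}(\PathSet))=\textbf{Iso}(\Lambda^*)$ made above. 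I also record that $\alpha(f)$ is an identity for every $f\in F$ (as $\psi_F$ is universal on each $F_v$) and that $\beta(u)=\beta(u')$ forces $F\cap\text{Iso}(u,u')\neq\emptyset$.

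With these available, fix $\ell\in L$ and $e\in\Lambda^1\cap\dom\ell$, and let $\bar v=\source(e)$ be the domain object of $\ell$, which via $\Lambda^0\cong R$ we regard as an element of $R\subseteq\VertSet$. Choose $g\in G$ with $\alpha(g)=\ell$; its domain object $w$ satisfies $\beta(w)=\bar v$, so there is $f\in F\cap\text{Iso}(w,\bar v)$, and then $g':=gf^{-1}$ satisfies $\alpha(g')=\alpha(g)\alpha(f)^{-1}=\alpha(g)=\ell$ and $\dom g'=\bar v\PathSet$. By local bijectivity of $\beta$ there is a unique $\tilde e\in\EdgeSet$ with $\source(\tilde e)=\bar v$ and $\beta(\tilde e)=e$, and $\tilde e\in\EdgeSet\cap\dom g'$. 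Applying the self-similarity of $(G,\PathSet)$ to $g'$ and $\tilde e$ produces $h':=g'|_{\tilde e}\in G$ with $g'(\tilde e\tilde p)=g'(\tilde e)\,h'(\tilde p)$ for all $\tilde p\in\range(\tilde e)\PathSet$. I claim that $h:=\alpha(h')\in L$ is the required restriction: given $p\in\range(e)\Lambda^*$, lift it through the bijection $\beta\colon\range(\tilde e)\PathSet\to\range(e)\Lambda^*$ to a path $\tilde p$, and then, using concatenation-compatibility and equivariance of $\beta$,
\begin{align*}
\ell(ep)&=\alpha(g')\big(\beta(\tilde e\tilde p)\big)=\beta\big(g'(\tilde e\tilde p)\big)=\beta\big(g'(\tilde e)\,h'(\tilde p)\big)\\
&=\beta\big(g'(\tilde e)\big)\,\beta\big(h'(\tilde p)\big)=\alpha(g')(\beta(\tilde e))\,\alpha(h')(\beta(\tilde p))=\ell(e)\,h(p).
\end{align*}
As $p$ was arbitrary this is the restriction relation, and uniqueness of the restriction is automatic from Definition \ref{Def: self-similar groupoid}.

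The step I expect to be most delicate is not the displayed computation but keeping the non-canonical identification $\beta(\PathSet)\cong\Lambda^*$ under control: namely, confirming that this identification is compatible with path concatenation and that the extended $\alpha$ is genuinely $\beta$-equivariant --- the two facts the argument leans on --- together with the bookkeeping that replaces an arbitrary representative $g$ of $\ell$ by one whose domain object is exactly the chosen orbit representative $\bar v$. Everything past that point is the routine verification above.
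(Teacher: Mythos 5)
Your proposal is correct and follows essentially the same route as the paper: choose compatible preimages of $\ell$ under $\alpha$ and of $e$ under $\beta$, apply the self-similarity of $(G,\PathSet)$ upstairs, and push the resulting restriction back down through the equivariant maps, obtaining $\ell|_e=\alpha(g'|_{\tilde e})$. The only difference is that you make explicit the bookkeeping the paper leaves implicit (adjusting the representative $g$ by an element of $F$ so that its domain object matches, and the concatenation-compatibility of the identification $\beta(\PathSet)\cong\Lambda^*$), which is a welcome clarification rather than a divergence.
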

\begin{proof}
    Let $m\in L$ and let $e\in \Lambda^1$ be in the domain of $m$. There exist some $g\in \alpha^{-1}(m)$ and $d\in \beta^{-1}(e)$ such that,
    \[g(dq)=g(d)g|_d(q),\]
    for all $q\in\range(d)\PathSet$ because $(G, \Gamma)$ is self-similar. Observe that $\range(d)\PathSet$ is in bijection with $\range(e)\Lambda^*$. It is then clear that
    \[m(ep)=\alpha(g)(\beta(dq)) =\alpha(g)(\beta(d))\alpha(g|d)(p)\]
    for all $p$ in $\range(e)\Lambda^*$. Thus, $m|_{e}=\alpha(g|_{d})$ and the action of $L$ is self-similar.
\end{proof}

Let us call $(L, \Lambda)$ the collapsed self-similar groupoid of $(G, \Gamma)$. Note that by construction, $(L, \Lambda)$ is a \textit{group bundle}, that is, a groupoid which is a disjoint union of groups. Before we address the main motivation for introducing collapsed self-similar groupoids, let us consider what happens when $S_{(G, \Gamma)}$ is congruence-free.

\begin{Proposition}\label{Prop: collapsed L,Lambda is congruence-free and strongly connected}
    If the inverse semigroup $S_{(G, \Gamma)}$ is congruence-free, then the inverse semigroup $S_{(L, \Lambda)}$ is congruence-free and $\Lambda$ is strongly connected.
\end{Proposition}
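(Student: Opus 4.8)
The plan is to apply the combinatorial criterion of Theorem~\ref{Thm: CongruenceFreeAppliedS_G} to the collapsed groupoid $(L,\Lambda^*)$. Since $S_{(G,\PathSet)}$ is congruence-free, that theorem tells us $(G,\PathSet)$ satisfies two conditions: (1) $|\source^{-1}(v)|\neq 1$ for every $v\in\VertSet$; and (2) for all distinct $v,w\in\VertSet$ there are $q\in v\PathSet$ and $p\in w\PathSet$ with $G\cap\text{Iso}(\range(q),w)\neq\emptyset$ and $G\cap\text{Iso}(\range(p),v)\neq\emptyset$. A first observation is that for a self-similar groupoid acting on a strongly connected graph, condition~(2) is automatic: given distinct vertices, a path $q$ from one to the other has range equal to the target vertex, and the identity at that vertex is an element of the groupoid. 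Hence it will suffice to prove that $\Lambda$ is strongly connected and that $(L,\Lambda^*)$ satisfies condition~(1); Theorem~\ref{Thm: CongruenceFreeAppliedS_G} applied to $(L,\Lambda^*)$ then yields congruence-freeness of $S_{(L,\Lambda^*)}$.

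To set up, identify $\Lambda^0$ with the chosen set $R$ of orbit representatives so that $\beta$ sends a vertex $u$ to the representative of its $G$-orbit. For vertices, $u\sim_F u'$ means exactly that $F\cap\text{Iso}(u,u')\neq\emptyset$, which by the construction of $F$ happens precisely when $G\cap\text{Iso}(u,u')\neq\emptyset$; thus the $\beta$-fibre over $\beta(u)$ is exactly the $G$-orbit of $u$. Local bijectivity of $\beta$ makes its restriction to $v\PathSet$ an isomorphism of rooted trees onto $\beta(v)\Lambda^*$, and comparing the edges issuing from the two roots identifies $\source^{-1}(\beta(v))$ in $\Lambda$ with $\source^{-1}(v)$ in $\Gamma$. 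Therefore $|\source^{-1}(\beta(v))|=|\source^{-1}(v)|\neq 1$, which is condition~(1) for $(L,\Lambda^*)$.

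For strong connectivity, take distinct $\bar v,\bar w\in\Lambda^0$, written $\bar v=\beta(v)$ and $\bar w=\beta(w)$ with $v,w\in R$. Then $v\neq w$, so condition~(2) for $(G,\PathSet)$ supplies $q\in v\PathSet$ with $G\cap\text{Iso}(\range(q),w)\neq\emptyset$; this forces $\range(q)$ into the $G$-orbit of $w$, so $\beta(\range(q))=\bar w$, and hence $\beta(q)$ is a path in $\Lambda$ from $\beta(\source(q))=\bar v$ to $\bar w$. Symmetrically, the path $p\in w\PathSet$ with $G\cap\text{Iso}(\range(p),v)\neq\emptyset$ gives a path $\beta(p)$ in $\Lambda$ from $\bar w$ to $\bar v$. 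Thus every pair of vertices of $\Lambda$ is strongly connected, $\Lambda$ is strongly connected, and the first paragraph completes the proof.

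The argument is essentially a transport of combinatorial data across the two quotient maps, so I do not anticipate a deep obstacle; the step deserving genuine care is confirming that the quotients behave as claimed --- that $\beta$ is locally bijective (which is what lets out-degrees and ranges of paths descend to $\Lambda$) and that the vertex-level relation $\sim_F$ coincides with ``same $G$-orbit'' (which is what matches $\Lambda^0$ with $R$) --- and these are precisely the facts established while building $(L,\Lambda^*)$. The degenerate case with $\Gamma$ edgeless is subsumed, since then $\Lambda$ is a single vertex of out-degree $0\neq1$ and condition~(2) is vacuous.
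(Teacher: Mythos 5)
Your proof is correct and follows essentially the same route as the paper: both reduce to verifying the two conditions of the congruence-freeness criterion for $(L,\Lambda^*)$, both get condition (1) from the fact that $\beta$ restricts injectively on $v\EdgeSet$ (your local-bijectivity phrasing is the same fact), and both derive strong connectivity of $\Lambda$ by pushing the paths supplied by condition (2) through $\beta$ and noting $\range(p)\sim_F v$, after which condition (2) for $(L,\Lambda^*)$ is automatic. No issues to flag.
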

\begin{proof}
Suppose that $S_{(G, \Gamma)}$ is congruence-free. By Corollary \ref{Cor: CongruenceFreeAppliedS_G with nonempty graph}, $(G, \Gamma)$ satisfies the following:
\begin{enumerate}
        \item  $|\source^{-1}(v)|> 1$ for all $v\in \VertSet$,
        \item  for each pair of distinct vertices $v,w \in\VertSet$, there exists $p\in w\PathSet$ and $q\in v\PathSet$ such that $G$ intersects $\text{Iso}(\range(p), v)$ and $\text{Iso}(\range(q), w)$.
    \end{enumerate}
    Observe that for each $v\in \VertSet$, $F\cap \text{Iso}(v,v) = \id_v$, and so for distinct $e,f\in v\EdgeSet$, we have $e\nsim_F f$. It follows that $\beta: \Gamma\rightarrow \Lambda$ is injective on $v\EdgeSet$, and so if $\Gamma$ satisfies condition 1., $\Lambda$ must satisfy condition 1.

    Now let $x,y\in \Lambda^0$ and suppose $v\in\beta^{-1}(x)$ and $w\in \beta^{-1}(y)$. By assumption, there exists some $p\in w\PathSet$ such that $G\cap \text{Iso}(\range(p),v)$ is nonempty. But then $\beta(p)$ is a path from $y$ to $\beta(\range(p))$. However $\range(p)\sim_F v$ and so $\beta(\range(p))=\beta(v)=x$. Thus, $\beta(p)$ is a path from $y$ to $x$. By a symmetric argument, there exists a path in $\Lambda$ from $x$ to $y$, and thus, $\lambda$ is strongly connected.

    It follows that $(L, \Lambda)$ trivially satisfies property 2.\ and so by Corollary \ref{Cor: CongruenceFreeAppliedS_G with nonempty graph}, $S_{(L, \Lambda)}$ is congruence-free.
\end{proof}
Now onto the primary concern: the singular and tight ideals of $KS_{(G, \Gamma)}$ and $KS_{(L, \Lambda)}$. Let $H=G\cap\text{Iso}(v,v)$ be the isotropy group acting on $v\PathSet$ for some vertex $v\in\VertSet$. Let $M$ be the image of $H$ under $\alpha$, that is, $M=L\cap \text{Iso}(\beta(v),\beta(v))$. The following lemma is really shown in Proposition \ref{Prop: L,Lambda is self-similar}.
\begin{Lemma}\label{Lem: collapsing morphism and restriction by paths}
    Let $h\in H$ and let $p\in v\Gamma^*$. Then $\alpha(h|_p) = \alpha(h)|_{\beta(p)}$.
\end{Lemma}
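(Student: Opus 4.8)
The plan is to prove the apparently stronger statement that $\alpha(g|_p) = \alpha(g)|_{\beta(p)}$ for \emph{every} $g \in G$ and every path $p \in \PathSet$ lying in $\dom g$; the lemma is then the special case $g = h \in H$. This generalization is essentially forced: restricting $h \in \text{Iso}(v,v)$ along a single edge $e \in v\EdgeSet$ produces an element $h|_e \in \text{Iso}(\range(e), \range(h(e)))$ which need not lie in $H$, so an induction carried out inside $H$ cannot close.

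I would induct on $|p|$. If $p$ is a vertex then $p = \source(g)$, and both sides reduce to $\alpha(g)$, since $\beta(p)$ is a vertex of $\Lambda$ and restriction along a trivial path is the identity operation. For the inductive step write $p = eq$ with $e \in \EdgeSet$ and $|q| < |p|$, noting $q \in \dom(g|_e) = \range(e)\PathSet$. Since $\beta$ is a graph homomorphism respecting concatenation, $\beta(p) = \beta(e)\beta(q)$, and because $(L, \Lambda^*)$ is self-similar by Proposition \ref{Prop: L,Lambda is self-similar}, the recursive restriction formula gives $\alpha(g)|_{\beta(p)} = (\alpha(g)|_{\beta(e)})|_{\beta(q)}$. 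The proof of Proposition \ref{Prop: L,Lambda is self-similar}, applied to $m = \alpha(g)$, the edge $\beta(e) \in \Lambda^1$, and the witnesses $g \in \alpha^{-1}(m)$ and $e \in \beta^{-1}(\beta(e))$ (which satisfy the required identity by self-similarity of $(G, \PathSet)$), shows $\alpha(g)|_{\beta(e)} = \alpha(g|_e)$; and the inductive hypothesis applied to $g|_e$ and $q$ gives $\alpha((g|_e)|_q) = \alpha(g|_e)|_{\beta(q)}$. Chaining these, $\alpha(g|_p) = \alpha((g|_e)|_q) = \alpha(g|_e)|_{\beta(q)} = (\alpha(g)|_{\beta(e)})|_{\beta(q)} = \alpha(g)|_{\beta(p)}$, as required.

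I do not expect a genuine obstacle here — this is the precise sense in which the lemma ``is really shown'' in Proposition \ref{Prop: L,Lambda is self-similar}. The only points needing attention are bookkeeping: that $e \in \dom g$ at each stage (which holds because $eq = p \in \dom g$); that the single-edge identity is insensitive to the choice of representatives, since the restriction $m|_{\beta(e)}$ is uniquely determined in the self-similar groupoid $(L,\Lambda^*)$, so we need not know that $\beta$ is injective on edges; and that $\beta$ is fixed once the identification $\beta(\PathSet) \cong \Lambda^*$ has been chosen, so that $\beta(p)$ is unambiguous.
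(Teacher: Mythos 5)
Your proposal is correct and matches the paper's intent exactly: the paper gives no separate proof, remarking only that the lemma ``is really shown'' in Proposition \ref{Prop: L,Lambda is self-similar}, and your induction on $|p|$ using the single-edge identity $\alpha(g)|_{\beta(e)}=\alpha(g|_e)$ from that proposition together with $g|_{eq}=(g|_e)|_q$ is precisely the fleshing-out that remark presupposes. Your observation that one must prove the statement for all of $G$ (not just $H$) for the induction to close is a worthwhile point the paper leaves implicit.
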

Recall that the actions of $L$ on $\Lambda^*$ and $G$ on $\PathSet$ extend linearly to actions of $KS_{(L, \Lambda)}$ on $K\Lambda^*$ and $KS_{(G, \Gamma)}$ on $K\Gamma^*$. Linearly extend the map $\alpha$ to map $KG$ to $KL$.
\begin{Proposition}
    Let $a\in KH$ and $p\in v\Gamma^*$. Then $ap=0$ if and only if $\alpha(a)\beta(p)=0$.
\end{Proposition}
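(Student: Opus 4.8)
The strategy is to transport the equation $ap = 0$ across the locally bijective map $\beta$. Write $\beta_v$ for the restriction of $\beta$ to the tree $v\Gamma^*$, which is a bijection onto $\beta(v)\Lambda^*$. The crucial point, which follows from the way the action of $L$ on $\Lambda^* = \beta(\Gamma^*)$ was constructed (compare the computation in the proof of Proposition \ref{Prop: L,Lambda is self-similar}), is that $\beta_v$ intertwines the two actions: for every $h \in H$ and every $q \in v\Gamma^*$,
\[
\beta\big(h(q)\big) = \alpha(h)\big(\beta(q)\big);
\]
equivalently $\alpha(h) = \beta_v \circ h \circ \beta_v^{-1}$ as automorphisms of $\beta(v)\Lambda^*$, so in particular $\alpha(h) \in M$. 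This holds because $\alpha$ collapses $\textbf{Iso}(\Gamma^*)$ by $F$ while $\beta$ collapses $\Gamma^*$ by the $F$-action, and since $\beta_v$ is a bijection the class $\beta(q)$ with $q \in v\Gamma^* = \dom h$ has a canonical representative, namely $q$, on which $h$ acts before passing to the quotient; the resulting map is independent of the choice of representative.

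Granting the intertwining identity, the rest is a routine linear-extension argument. I would first extend $\beta_v$ to a $K$-linear isomorphism of $Kv\Gamma^*$, the span of $v\Gamma^*$ inside $K\Gamma^*$, onto $K\beta(v)\Lambda^*$, still denoted $\beta$. Writing $a = \sum_{h \in H} c_h h$ as a finite $K$-linear combination, each summand $h$ has domain $v\Gamma^*$, so $ap = \sum_h c_h\, h(p)$ lies in $Kv\Gamma^*$, and
\[
\beta(ap) = \sum_h c_h\, \beta\big(h(p)\big) = \sum_h c_h\, \alpha(h)\big(\beta(p)\big) = \Big(\sum_h c_h\, \alpha(h)\Big)\!\big(\beta(p)\big) = \alpha(a)\,\beta(p),
\]
using the intertwining identity, the linearity of $\alpha$, and the fact that the $L$-action on $\Lambda^*$ extends linearly to $KS_{(L,\Lambda^*)}$ acting on $K\Lambda^*$. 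Since $\beta$ is injective on $Kv\Gamma^*$ and $ap \in Kv\Gamma^*$, we conclude $ap = 0$ if and only if $\beta(ap) = 0$, which by the displayed computation holds if and only if $\alpha(a)\,\beta(p) = 0$.

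The only step that needs genuine justification is the intertwining identity $\beta(h(q)) = \alpha(h)(\beta(q))$, and I expect this to be the main (though mild) obstacle, since it amounts to carefully unwinding how $L$ was defined to act on $\Lambda^* = \beta(\Gamma^*)$; the required well-definedness is exactly the verification already carried out in the proof of Proposition \ref{Prop: L,Lambda is self-similar}, applied to elements of the isotropy group $H$ where all domains coincide. I note that the argument uses only that $\beta$ is locally bijective; in particular it does not require $\alpha$ to be injective on $H$ (although this is true and may be recorded separately).
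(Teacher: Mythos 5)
There is a genuine gap, and it comes from how you read the product $ap$. In this proposition $a$ and $p$ are elements of the contracted algebra $KS_{(G,\Gamma^*)}$ and $ap$ is the product there: by relation 5 of Definition \ref{Def: Associated Inv SG} extended to paths, $hp = h(p)\,h|_p$ for $h\in H$ and $p\in v\Gamma^*$, so
\[
ap \;=\; \sum_{h\in H} c_h\, h(p)\, h|_p,
\]
a linear combination of canonical forms $qg$ with $q=h(p)$ and $g=h|_p$. Whether this vanishes depends on the restrictions $h|_p$ as well as on the images $h(p)$. Your computation replaces $ap$ by $\sum_h c_h\, h(p)\in K\Gamma^*$, the linearized action of $a$ on the path $p$, which is a strictly weaker vanishing condition: if $g,h\in H$ satisfy $g(p)=h(p)$ but $g|_p\neq h|_p$, then for $a=g-h$ your expression vanishes while $ap=g(p)g|_p-h(p)h|_p\neq 0$ in $KS_{(G,\Gamma^*)}$. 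The product reading is the one required downstream, since Propositions \ref{Prop: tight hands on descriptions} and \ref{Prop: singular-essential hands on description} characterize membership in $\TightIdeal$ and $\EssentialIdeal$ through vanishing of the algebra products $ap$ and $apq$. (The sentence preceding the proposition, which recalls the action on $K\Gamma^*$, makes your reading understandable, but the intended proof must track both $g(p)$ and $g|_p$.)

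The intertwining identity you isolate, $\beta(h(q))=\alpha(h)(\beta(q))$, is correct and constitutes exactly half of the argument: together with the injectivity of $\beta$ on $v\Gamma^*$ it gives $g(p)=h(p)$ if and only if $\alpha(g)(\beta(p))=\alpha(h)(\beta(p))$. The missing half is the companion identity for restrictions, $\alpha(h|_p)=\alpha(h)|_{\beta(p)}$ (Lemma \ref{Lem: collapsing morphism and restriction by paths}, itself extracted from the proof of Proposition \ref{Prop: L,Lambda is self-similar}), combined with the injectivity of $\alpha$ on each hom-set $G\cap\text{Iso}(u,w)$ (the $\psi_F$-class of an element meets a given hom-set only in that element, because $F$ meets each isotropy group only in the identity); together these yield $g|_p=h|_p$ if and only if $\alpha(g)|_{\beta(p)}=\alpha(h)|_{\beta(p)}$. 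Only with both equivalences do the canonical forms $h(p)h|_p$ collide in $S_{(G,\Gamma^*)}$ exactly when their images collide in $S_{(L,\Lambda^*)}$, so that the coefficient cancellations match and $ap=0$ if and only if $\alpha(a)\beta(p)=0$. Note also that, contrary to your closing remark, injectivity of $\alpha$ on hom-sets is genuinely used in the ``if'' direction of the corrected argument.
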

\begin{proof}
    Let $g,h\in \Support a$. Notice that $\alpha$ restricts to a bijection between $K H$ and $KM$, and $\beta$ restricts to a bijection between $v \Gamma^*$ and $\beta(v)\Gamma^*$. Then $g(p)=h(p)$ if and only if $\alpha(g)(\beta(p))=\alpha(h)(\beta(p))$. By Lemma \ref{Lem: collapsing morphism and restriction by paths} we have $g|_p=h|_p$ if and only if $\alpha(g)|_{\beta(p)}=\alpha(h)|_{\beta(p)}$. It follows that $ap=0$ if and only if $\alpha(a)\beta(p) = 0$.
\end{proof}
Observe that $\beta: \Gamma^*\rightarrow \Lambda^*$ uniquely extends to a map $\beta: \Gamma^\omega\rightarrow\Lambda^\omega$. Then applying Propositions \ref{Prop: tight hands on descriptions} and \ref{Prop: singular hands on description}, we immediately have the following.
\begin{Corollary}\label{Cor: ess is tight in groupoid iff singular is tight in collapsed groupoid}
    The singular and tight ideals of $KS_{(L, \Lambda)}$ coincide if and only if the singular and tight ideals of $KS_{(L, \Lambda)}$ coincide.
\end{Corollary}
\begin{Corollary}\label{Cor: groupoid algebra simple iff collapsed groupoid algebra simple}
    The algebra $\faktor{KS_{(G, \Gamma)}}{\TightIdeal}$ is simple if and only if the algebra $\faktor{KS_{(L, \Lambda)}}{\TightIdeal}$ is simple.
\end{Corollary}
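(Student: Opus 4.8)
The plan is to route both implications through Corollary~\ref{Cor: ess is tight in groupoid iff essential is tight in collapsed groupoid}, which says the tight and essential ideals coincide in $KS_{(G,\Gamma^*)}$ exactly when they coincide in $KS_{(L,\Lambda^*)}$, together with Theorem~\ref{Thm: Essential unique maximal containing tight}, which turns the equality $\TightIdeal=\EssentialIdeal$ into simplicity of the corresponding tight quotient. Since Theorem~\ref{Thm: Essential unique maximal containing tight} applies only to congruence-free inverse semigroups, the first task is to check that congruence-freeness is both preserved and reflected by collapsing, and then to split into the congruence-free and non-congruence-free cases.

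First I would show that $S_{(G,\PathSet)}$ is congruence-free if and only if $S_{(L,\Lambda^*)}$ is. One direction is Proposition~\ref{Prop: collapsed L,Lambda is congruence-free and strongly connected}. For the converse, appeal to Corollary~\ref{Cor: CongruenceFreeAppliedS_G with nonempty graph}: condition~(1) passes from $\Lambda$ back to $\Gamma$ because $\beta$ is locally bijective, so it restricts to a bijection $v\EdgeSet\to\beta(v)\Lambda^1$ and hence $|\source^{-1}(v)|=|\source^{-1}(\beta(v))|$ for every $v\in\VertSet$; and condition~(2) passes back after noting that in $\Gamma$ it suffices to verify it for distinct orbit representatives $v,w\in R$ (two vertices in one $G$-orbit are joined by an element of $G$, and composing witnesses with orbit isomorphisms promotes condition~(2) from $R$ to all of $\VertSet$), so that, given a $\Lambda$-path and a witnessing isomorphism in $L$ for the pair $\beta(v),\beta(w)$, one lifts the path through $\beta$ and the isomorphism through $\alpha$—using Lemma~\ref{Lem: collapsing morphism and restriction by paths} and Proposition~\ref{Prop: L,Lambda is self-similar} for compatibility with restrictions—and then pre- and post-composes with the orbit isomorphisms of $F$ to produce a witness in $\Gamma$.

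With this settled, the congruence-free case is immediate: if $S_{(G,\PathSet)}$ (equivalently $S_{(L,\Lambda^*)}$) is congruence-free, then by Theorem~\ref{Thm: Essential unique maximal containing tight} the algebra $\faktor{KS_{(G,\Gamma^*)}}{\TightIdeal}$ is simple iff $\TightIdeal=\EssentialIdeal$ in $KS_{(G,\Gamma^*)}$, and $\faktor{KS_{(L,\Lambda^*)}}{\TightIdeal}$ is simple iff $\TightIdeal=\EssentialIdeal$ in $KS_{(L,\Lambda^*)}$; these two equalities are equivalent by Corollary~\ref{Cor: ess is tight in groupoid iff essential is tight in collapsed groupoid}, so the two simplicity statements coincide.

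The main obstacle is the case where $S_{(G,\PathSet)}$ is not congruence-free, where Theorem~\ref{Thm: Essential unique maximal containing tight} gives nothing. One implication remains cheap: the quotient homomorphism $S_{(G,\PathSet)}\to S_{(L,\Lambda^*)}$ extends linearly to an algebra surjection $\faktor{KS_{(G,\Gamma^*)}}{\TightIdeal}\twoheadrightarrow\faktor{KS_{(L,\Lambda^*)}}{\TightIdeal}$ onto a nonzero algebra, so if the source is simple so is the target. For the reverse implication I would argue that the collapsing construction is a merging of Murray--von Neumann equivalent idempotents—each orbit isomorphism $f_u\in F$ descends to a partial isometry with $f_uf_u^*=\id_v$ and $f_u^*f_u=\id_u$, so every vertex is equivalent to an orbit-representative vertex—and use this to identify $\faktor{KS_{(L,\Lambda^*)}}{\TightIdeal}$ with a full corner of $\faktor{KS_{(G,\Gamma^*)}}{\TightIdeal}$, whence the two algebras are Morita equivalent and share their two-sided ideal lattice, so simplicity transfers in both directions. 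Pinning down that corner identification (checking that the corner cut out by the representative vertices really is $\faktor{KS_{(L,\Lambda^*)}}{\TightIdeal}$, that it is full, and handling non-unital $\KS$ when $\VertSet$ is infinite via a net of local units rather than a single idempotent) is where I expect the genuine work to lie; the rest is bookkeeping on top of results already in hand.
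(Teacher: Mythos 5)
Your treatment of the congruence-free case is exactly the paper's proof: the paper's argument for this corollary is the one-line combination of Corollary \ref{Cor: ess is tight in groupoid iff essential is tight in collapsed groupoid} with Theorem \ref{Thm: Essential unique maximal containing tight}, and nothing more. You are right to notice that Theorem \ref{Thm: Essential unique maximal containing tight} needs both $S_{(G,\PathSet)}$ and $S_{(L,\Lambda^*)}$ to be congruence-free, and that the paper only records one direction of the transfer (Proposition \ref{Prop: collapsed L,Lambda is congruence-free and strongly connected}); your lifting argument for the converse via local bijectivity of $\beta$ and reduction of condition~2 to orbit representatives is sound and is a genuine (if minor) improvement in rigour over what is written. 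In practice the corollary is only ever invoked in the type (CF) setting, so this is the case that matters.

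The non-congruence-free case, however, contains a concrete error. There is no semigroup homomorphism $S_{(G,\PathSet)}\to S_{(L,\Lambda^*)}$ induced by $(\alpha,\beta)$, hence no ``cheap'' algebra surjection $\faktor{KS_{(G,\Gamma^*)}}{\TightIdeal}\twoheadrightarrow\faktor{KS_{(L,\Lambda^*)}}{\TightIdeal}$: if $e\neq f$ are edges with $e\sim_F f$ (these exist whenever the collapse is nontrivial, and necessarily have distinct sources), then $e^*f=0$ in $S_{(G,\PathSet)}$ while $\beta(e)^*\beta(f)=\beta(e)^*\beta(e)=\range(\beta(e))\neq 0$ in $S_{(L,\Lambda^*)}$, so multiplicativity fails. (This is also visible abstractly: when $S_{(G,\PathSet)}$ is congruence-free it admits no proper nontrivial quotient at all, yet $S_{(L,\Lambda^*)}$ is generally not isomorphic to it.) Your Morita-equivalence idea --- cutting down by the idempotent $\sum_{v\in R}v$, using $f_uf_u^*=\id_v$, $f_u^*f_u=\id_u$ to see fullness --- is the right way to repair this and would give both implications at once without the surjection, but as you say the identification of that corner with $\faktor{KS_{(L,\Lambda^*)}}{\TightIdeal}$ (including compatibility of the two tight ideals and the non-unital bookkeeping) is unverified work, not bookkeeping already in hand. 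If you restrict the corollary to the congruence-free setting, none of this is needed and your proof coincides with the paper's.
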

\begin{proof}
    This follows from Corollary \ref{Cor: ess is tight in groupoid iff singular is tight in collapsed groupoid} and Theorem \ref{Thm: singular unique maximal containing tight}.
\end{proof}
\section{Supporting \texorpdfstring{$\SingularNotTightIdeal$}{the singular ideal minus the tight ideal} on the groupoid algebra.}\label{Sec: Supporting essnottight on groupoid subalgebra}
We return to the analysis of the singular and tight ideals of $\KS$ for a self-similar groupoid $(G, \Gamma)$ of type (CF). The goal of this section is to show that if $\SingularNotTightIdeal$ is nonempty, it must intersect the groupoid algebra $KG$. Let $M=\PathSet G$ be a subsemigroup of $S_{(G, \Gamma)}$. We begin by showing that when nonempty, $\SingularNotTightIdeal$ intersects the subalgebra $KM$. For $a\notin\TightIdeal$, we say that an infinite path $\mathfrak{p}\in \Gamma^{\omega}$ \textit{witnesses} that $a\notin\TightIdeal$ if for all prefixes $p$ of $\mathfrak{p}$, we have $ap\neq 0$. Note that it follows from Proposition \ref{Prop: tight hands on descriptions} that whenever $a\notin\TightIdeal$, such a witness path must exist.
\begin{Lemma}\label{Lem: a witnessed by _w, aw witnessed by w*_w}
    If $\mathfrak{p}\in\Gamma^{\omega}$ witnesses that $a\notin \TightIdeal$, then $ap\notin \TightIdeal$ for all prefixes $p$ of $\mathfrak{p}$.
\end{Lemma}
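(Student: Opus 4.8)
The plan is to exploit the characterization of $\TightIdeal$ in Proposition \ref{Prop: tight hands on descriptions}(3): membership in $\TightIdeal$ is equivalent to \emph{every} infinite path having a prefix that annihilates the element from the right. Concretely, if $\mathfrak{p}$ witnesses that $a \notin \TightIdeal$ and $p$ is a prefix of $\mathfrak{p}$, I want to produce an infinite path witnessing that $ap \notin \TightIdeal$; the natural candidate is the ``tail'' $\mathfrak{q}$ of $\mathfrak{p}$ after $p$, so that $\mathfrak{p} = p\mathfrak{q}$. The task is then to show that every prefix of $\mathfrak{q}$ fails to annihilate $ap$ from the right.

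First I would write $\mathfrak{p} = p\mathfrak{q}$ where $\mathfrak{q} \in \range(p)\Gamma^{\omega}$, which makes sense since $p$ is a finite prefix of $\mathfrak{p}$. Let $q$ be an arbitrary prefix of $\mathfrak{q}$; then $pq$ is a prefix of $\mathfrak{p}$. Since $\mathfrak{p}$ witnesses $a \notin \TightIdeal$, we have $a(pq) \neq 0$ in $\KS$, i.e.\ $(ap)q \neq 0$ by associativity. As $q$ ranges over all prefixes of $\mathfrak{q}$, this says precisely that $\mathfrak{q}$ witnesses $ap \notin \TightIdeal$; in particular $ap \notin \TightIdeal$. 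Invoking Proposition \ref{Prop: tight hands on descriptions} once more (the failure of condition (3) forces $a \notin \CKIdeal = \TightIdeal$) closes the argument.

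The one point requiring a moment's care is that $ap$ is genuinely a nonzero element of $\KS$ on which the argument bites --- but this is automatic, since $ap \neq 0$ is already the $q = \range(p)$ (empty tail) instance of the displayed inequality, using relation 4.\ of Definition \ref{Def: Associated Inv SG} to see $p\range(p) = p$. So there is no real obstacle here; the statement is essentially a reindexing of the witness condition along the decomposition $\mathfrak{p} = p\mathfrak{q}$, and the only thing to be vigilant about is keeping the right-multiplication associativity $(ap)q = a(pq)$ straight and confirming that $\mathfrak{q}$ is a legitimate infinite path (which follows from $\Gamma$ being row-finite and $\mathfrak{p}$ being infinite). I would present it in three short lines.
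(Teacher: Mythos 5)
Your proposal is correct and follows essentially the same route as the paper: decompose $\mathfrak{p}=p\mathfrak{q}$, observe that every prefix $q$ of $\mathfrak{q}$ makes $pq$ a prefix of $\mathfrak{p}$ so that $(ap)q=a(pq)\neq 0$, and conclude via the witness characterization from Proposition \ref{Prop: tight hands on descriptions} that $\mathfrak{q}$ witnesses $ap\notin\TightIdeal$. No substantive differences.
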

\begin{proof}
    Suppose that $a\notin \TightIdeal$. Let $\mathfrak{p}\in\Gamma^{\omega}$ witness that $a\notin \TightIdeal$. Suppose $p,q\in \PathSet$ and $\mathfrak{q}\in\Gamma^{\omega}$ such that $\mathfrak{p}=pq\mathfrak{q}$. Then $apq \neq 0$ as $pq$ is a finite prefix of $\mathfrak{p}$. Thus, $q\mathfrak{q}$ witnesses that $ap\notin \TightIdeal$ for every finite prefix $p$ of $\mathfrak{p}$.
\end{proof}
\begin{Proposition}\label{Prop: EssNotTight intersects KM}
    If $\SingularNotTightIdeal$ is not empty, $\SingularNotTightIdeal$ contains an element of the subalgebra $KM$.
\end{Proposition}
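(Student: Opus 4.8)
The plan is to produce the required element by right-multiplying a given $a\in\EssentialNotTightIdeal$ by a sufficiently long prefix of a path witnessing that $a\notin\TightIdeal$, and then checking that the result both lands in $KM$ and remains in $\EssentialNotTightIdeal$.

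First I would fix $a\in\EssentialNotTightIdeal$ and expand it in the canonical basis, $a=\sum_{i=1}^{k}\lambda_i p_i g_i q_i^*$ with $p_i,q_i\in\PathSet$, $g_i\in\text{Iso}(\range(q_i),\range(p_i))$, and $\lambda_i\in K\setminus\{0\}$. Since $a\notin\TightIdeal$, Proposition \ref{Prop: tight hands on descriptions} supplies an infinite path $\mathfrak{p}\in\Gamma^{\omega}$ witnessing this; let $p$ be the prefix of $\mathfrak{p}$ of length $n:=\max_i|q_i|$ (a vertex if $n=0$). The key computation is that $ap\in KM$: the canonical form $p_ig_iq_i^*$ has domain $q_i\PathSet$, so $q_i^*p=0$ unless $p\in q_i\PathSet$, and since $|p|=n\geq|q_i|$, when $p\notin q_i\PathSet$ the entire term $p_ig_iq_i^*p$ vanishes. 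When $p=q_ip_i'$ one gets $q_i^*p=p_i'$, and iterating relation 5.\ of Definition \ref{Def: Associated Inv SG} (equivalently, applying the self-similarity identity along $p_i'$) yields $p_ig_iq_i^*p=p_i\,g_i(p_i')\,g_i|_{p_i'}$, which is the product of the path $p_i g_i(p_i')$ with the groupoid element $g_i|_{p_i'}$, hence an element of $M=\PathSet G$. Summing over $i$ gives $ap\in KM$. It then remains to observe that $ap\in\EssentialNotTightIdeal$: $ap\neq 0$ because $p$ is a finite prefix of the witness $\mathfrak{p}$; $ap\notin\TightIdeal$ by Lemma \ref{Lem: a witnessed by _w, aw witnessed by w*_w}; and $ap\in\EssentialIdeal$ because $\EssentialIdeal$ is a two-sided ideal and $p\in S_{(G,\PathSet)}$. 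Thus $ap$ is the desired element of $(\EssentialIdeal\setminus\TightIdeal)\cap KM$.

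I do not expect a serious obstacle; the argument is essentially bookkeeping with canonical forms. The one point requiring care is the choice of the truncation length $n$: taking it at least as large as every $|q_i|$ is precisely what forces each surviving summand of $ap$ to collapse to the form (path)$\cdot$(groupoid element), with no residual $q^*$-tail, so that the product genuinely lies in $KM$ rather than merely in $\KS$. Once that is arranged, non-tightness is preserved by Lemma \ref{Lem: a witnessed by _w, aw witnessed by w*_w} and essentiality by the ideal property of $\EssentialIdeal$, so nothing further is needed.
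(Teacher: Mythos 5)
Your argument is correct and follows essentially the same route as the paper: truncate the witness path at length at least $\max_i|q_i|$, observe that each surviving canonical-form summand collapses to an element of $M=\PathSet G$, and carry non-tightness through Lemma \ref{Lem: a witnessed by _w, aw witnessed by w*_w} and essentiality through the ideal property. The only cosmetic difference is that you expand $p_i g_i m$ further to $p_i\,g_i(m)\,g_i|_{m}$ via the self-similarity relation, a step the paper leaves implicit.
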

\begin{proof}
    Suppose $a\in \SingularNotTightIdeal$ and let $\mathfrak{w}\in\Gamma^{\omega}$ witness that $a\notin \TightIdeal$. By Lemma \ref{Lem: a witnessed by _w, aw witnessed by w*_w}, $aw\notin \TightIdeal$ for any finite prefix $w$ of $\mathfrak{w}$. Additionally, $aw\in \SingularIdeal$ because $\SingularIdeal$ is an ideal.

    Let $a=\sum_{s\in S_{(G, \Gamma)}^{\sharp}}k_ss$. Each $s$ in the support of $a$ may be written in the canonical form $pgq^{*}$ with $p,q\in \PathSet$ and $g\in G$. Whenever a prefix $w$ of $\mathfrak{w}$ satisfies $|w|\geq |q|$, we have,
    \[pgq^{*} w = \begin{cases}
        0&\text{ if $q$ is not a prefix of $w$,}\\
        pgm &\text{ if $w=qm$ for some $m\in \PathSet$.}
    \end{cases}\]
    In either case, $pgq^{*} w$ is in $KM$. Therefore, we may choose a sufficiently long prefix $w$ of $\mathfrak{w}$ such that $sw\in KM$ for all $s\in \Support a$, in which case $aw\in KM$.
\end{proof}
\begin{Proposition}\label{Prop: aw = 0 iff a_pw = 0}
    Let $a\in KM$. We may uniquely write $a=\sum_{p\in \PathSet}pa_{p}$ where $a_p\in KG$ is nonzero for finitely many $p$. For any $w\in\PathSet$, $aw=0$ if and only if $a_pw=0$ for all $p\in \PathSet$.

    As a corollary, $a$ is singular if and only if $a_p$ is singular for all $p$.
\end{Proposition}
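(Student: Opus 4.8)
The plan is to analyze the action of $a \in KM$ on a path $w$ by decomposing $a$ according to the "path prefix" part. Write $a = \sum_{p \in \PathSet} p a_p$ with $a_p \in KG$; such a decomposition exists and is unique because every element $s$ in the support of $a$ is of the form $p g$ with $p \in \PathSet$ and $g \in G$, and we may group terms by their $\PathSet$-prefix $p$. Note that if $a_p \neq 0$ then $\source(a_p) = \range(p)$, i.e.\ $a_p \in K(\range(p)G)$.

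First I would compute how $p a_p$ acts on a path $w$. Since $a_p \in KG$ and $G$ acts on $\PathSet$, the element $a_p w$ is a linear combination of paths all lying in $\range(p)\PathSet$ (when nonzero), each of the same length $|w|$; here I use that $G$ acts by tree isomorphisms, so $a_p$ sends a path to a $K$-linear combination of paths, and $\source(a_p) = \range(p)$ forces $a_p w = 0$ unless $\source(w) = \range(p)$. Then $p(a_p w)$ is a linear combination of paths all in $p\PathSet$. The key observation is that the sets $p\PathSet$, as $p$ ranges over $\PathSet$ with fixed length, are pairwise disjoint — more to the point, the supports of $p(a_p w)$ for distinct $p$ occupying the decomposition are pairwise disjoint inside $\PathSet$, since a path in $p\PathSet$ determines its prefix $p$ of length $|p|$. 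Therefore $aw = \sum_p p(a_p w)$ is a sum of elements with pairwise disjoint support in the basis $\PathSet$ of $K\PathSet$, and hence $aw = 0$ if and only if $p(a_p w) = 0$ for every $p$, which holds if and only if $a_p w = 0$ for every $p$ (as left multiplication by the path $p$ is injective on $K\PathSet$, being induced by the injective concatenation map $q \mapsto pq$).

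For the corollary, recall from Proposition \ref{Prop: singular-essential hands on description} that $a$ is singular if and only if for every $z \in \PathSet$ there exists $m \in \range(z)\PathSet$ with $a z m = 0$. Apply the main statement with $w = zm$: we have $a(zm) = 0$ if and only if $a_p (zm) = 0$ for all $p$. So if each $a_p$ is singular, then for a given $z$ we can, by finiteness of the support, successively extend $z$ to kill $a_p$ for each of the finitely many $p$ with $a_p \neq 0$ (using that once $a_p(zm) = 0$, also $a_p(zm m') = 0$ for any further extension), yielding a common $zm$ with $a_p(zm) = 0$ for all $p$, hence $a(zm) = 0$; thus $a$ is singular. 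Conversely, if $a$ is singular, then for each $z$ some extension $zm$ has $a(zm) = 0$, whence $a_p(zm) = 0$ for every $p$, so each $a_p$ is singular.

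The only mild subtlety — and the step I would be most careful about — is the disjointness of supports claim: one must check that when $a_p w \neq 0$, all of its basis terms genuinely lie in $p\PathSet$ (this uses $\source(a_p) = \range(p)$ and that $G$-action preserves sources on trees) and that $p \cdot (-)$ is injective, so that no cancellation can occur between the contributions of different $p$'s. Once that bookkeeping is in place, everything else is immediate from the definitions and Proposition \ref{Prop: singular-essential hands on description}.
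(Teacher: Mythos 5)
Your overall strategy coincides with the paper's: decompose $a=\sum_p pa_p$, show the summands of $aw=\sum_p p(a_pw)$ have pairwise disjoint supports keyed on the length-$|p|$ prefix, and recover each $a_pw$ from $p(a_pw)$. However, there is a genuine gap in how you justify the central step. You treat $a_pw$ as a linear combination of \emph{paths} and do the bookkeeping in the span of $\PathSet$, but the products in this Proposition are taken in the contracted inverse semigroup algebra $\KS$, not via the action on $\PathSet$. For $g\in\Support a_p$ one has $gw=g(w)\,g|_w$, so $a_pw=\sum_g k_g\,g(w)\,g|_w$ is a linear combination of canonical forms in $M=\PathSet G$, each carrying a groupoid component $g|_w$; it is not an element of the span of $\PathSet$. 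The distinction is not cosmetic: what your argument controls is $\sum_g k_g\,g(w)$, the image of $w$ under the module action, and this can vanish while $a_pw\neq 0$ in $\KS$ --- take $a_p=g-h$ with $g(w)=h(w)$ but $g|_w\neq h|_w$. So, as written, you prove the analogue of the statement for the action on infinite/finite paths, which is strictly weaker in the direction ($aw=0\Rightarrow a_pw=0$ for all $p$) that is actually used downstream in Corollary \ref{Cor: EssNotTight intersects KG} and Theorem \ref{Thm: EssNotTight intersects H_p}. (A smaller slip of the same flavour: elements of $\Support a_p$ have \emph{image} $\range(p)\PathSet$ but may have varying domains, so $a_pw=0$ is not forced by $\source(w)\neq\range(p)$ alone.)

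The repair is small and lands exactly on the paper's proof. Expand $a_pw=\sum_{q\in\Gamma^{|w|}}q\,b_{pq}$ with $b_{pq}\in KG$; this is the correct replacement for your ``linear combination of paths.'' The supports of the elements $pq\,b_{pq}$ in the basis $S_{(G,\PathSet)}^{\sharp}$ of $\KS$ are pairwise disjoint, because $pq=p'q'$ with $|q|=|q'|=|w|$ forces $p=p'$ and $q=q'$, and for a fixed path $pq$ the canonical forms $pq\,g$ with distinct $g\in G$ are distinct basis elements. Hence $aw=0$ forces all $b_{pq}=0$ and therefore all $a_pw=0$ (equivalently, left multiplication by $p$ is injective on the span of $\range(p)S_{(G,\PathSet)}$ since $p^*p=\range(p)$). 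Your derivation of the singularity corollary from Proposition \ref{Prop: singular-essential hands on description}, including the successive-extension argument over the finitely many nonzero $a_p$, is fine once the main equivalence is established in $\KS$.
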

\begin{proof}
    Let $a = \sum_{p\in \PathSet}pa_{p}$. The uniqueness of this factorization follows from the canonical form of elements of $S_{(G, \Gamma)}$. Additionally, $\Support a_p \subseteq \bigsqcup_{v\in \VertSet}\text{Iso}(v,\range(p))$, that is, for all $g\in \Support a_p$, the image of $g$ is $\range(p)\PathSet$, and so, $pgw= pg(w)g|_w\neq 0$ whenever $w\in \dom g$. 
    
    For each $g\in G$ and $w\in \PathSet$, we have
    \[gw = \begin{cases}
        g(w)g|_w &\text{if } w\in \dom g,\\
        0 &\text{if } w\notin \dom g,
    \end{cases}\]
    where $|g(w)|=|w|$. If $|w|=n$, then whenever $a_pw\neq 0$, we may uniquely write $a_pw=\sum_{q\in \Gamma^n}qb_{pq}$ where $b_{pq}\in KG$ is nonzero for finitely many $q$. Supposing that $aw=0$, we have,
    \[0 = aw =\sum_{p\in \PathSet}pa_{p}w
    = \sum_{p\in \PathSet}\sum_{q\in \Gamma^n}pqb_{pq}\]
    with each $pq\neq 0$ as discussed. Whenever $pq=p'q'$ and $|q|=|q'|$, it is clear that $p=p'$ and $q=q'$. Thus, $aw=0$ implies that $b_{pq}=0$ for all $pq\in \PathSet$, which further implies that
    \[a_p w= \sum_{q\in \Gamma^n}qb_{pq} = 0.\]
    Conversely, suppose that $a_pw=0$ for all $p\in \PathSet$. It is immediate that
    \[aw = \sum_{p\in \PathSet}pa_{p}w = 0,\]
    thus, $aw=0$ if and only if $a_pw = 0$ for all $p$.

    Finally, it follows from Proposition \ref{Prop: singular hands on description} that if $a$ is singular, then $a_p$ is singular for all $p$. Conversely, if each $a_p$ is singular, then $a = \sum_{p\in \PathSet}pa_{p}$ is singular because $\SingularIdeal$ is an ideal.
\end{proof}
\begin{Corollary}\label{Cor: EssNotTight intersects KG}
    If $\SingularNotTightIdeal\neq \emptyset$, then $\SingularNotTightIdeal$ intersects the groupoid algebra $KG$.
\end{Corollary}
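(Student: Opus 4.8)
The plan is to chain the two preceding results of this section together. First I would invoke Proposition \ref{Prop: EssNotTight intersects KM}: assuming $\EssentialNotTightIdeal$ is nonempty, it contains an element $a$ of the subalgebra $KM$, where $M=\PathSet G$. Then I would apply Proposition \ref{Prop: aw = 0 iff a_pw = 0} to write $a=\sum_{p\in\PathSet}pa_p$ with each $a_p\in KG$ and only finitely many $a_p$ nonzero. Since $a$ lies in $\EssentialIdeal$ it is singular, so the corollary to Proposition \ref{Prop: aw = 0 iff a_pw = 0} shows that every $a_p$ is singular, i.e.\ $a_p\in\EssentialIdeal$.

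It then remains to exhibit some $p$ with $a_p\notin\TightIdeal$, which I would do by contradiction. If $a_p\in\TightIdeal$ for every $p$, then since $\TightIdeal$ is a two-sided ideal of $\KS$ and each $p$ is an element of $S_{(G,\PathSet)}\subseteq\KS$, we would have $pa_p\in\TightIdeal$; summing the finitely many nonzero terms gives $a=\sum_{p}pa_p\in\TightIdeal$, contradicting $a\in\EssentialNotTightIdeal$. Hence some $a_p$ belongs to $\EssentialIdeal\setminus\TightIdeal$, and this $a_p\in KG$ is the desired element.

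I do not expect a genuine obstacle here: the substantive work has already been done in Propositions \ref{Prop: EssNotTight intersects KM} and \ref{Prop: aw = 0 iff a_pw = 0}, and the only thing to verify is the ideal bookkeeping, namely that $\TightIdeal$ absorbs left multiplication by a basis element $p$ and is closed under the finite sum at hand --- both immediate because $\TightIdeal$ is an ideal of $\KS$ and the decomposition of $a$ has finite support.
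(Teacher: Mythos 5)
Your proposal is correct and follows essentially the same route as the paper: invoke Proposition \ref{Prop: EssNotTight intersects KM} to land in $KM$, decompose $a=\sum_p pa_p$ via Proposition \ref{Prop: aw = 0 iff a_pw = 0}, and conclude each $a_p$ is singular while some $a_p\notin\TightIdeal$. Your justification of that last step (if every $a_p$ were in $\TightIdeal$ then $a=\sum_p pa_p\in\TightIdeal$ since $\TightIdeal$ is a two-sided ideal) is a clean and valid way to fill in what the paper dispatches with the word ``similarly.''
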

\begin{proof}
    By Proposition \ref{Prop: EssNotTight intersects KM}, if $\SingularNotTightIdeal\neq \emptyset$ there exists some $a\in KM\cap\SingularNotTightIdeal$. Uniquely express $a$ in the form $\sum_{p\in \PathSet}pa_{p}$ where $a_p\in KG$ as in Proposition \ref{Prop: aw = 0 iff a_pw = 0}. Then, by Proposition \ref{Prop: aw = 0 iff a_pw = 0}, each $a_p$ is singular. Similarly, since $a\notin \TightIdeal$, there is some $p\in\PathSet$ such that $a_p\notin\TightIdeal$.
\end{proof}
\section{Supporting \texorpdfstring{$\SingularNotTightIdeal$}{the singular ideal minus the tight ideal} when \texorpdfstring{$G$}{G} is contracting}\label{Sec: Supporting ideals in nucleus}
Section \ref{Sec: Supporting essnottight on groupoid subalgebra} establishes that when searching for elements of $\SingularNotTightIdeal$, we may restrict our search to the subalgebra $KG$. In the case of a contracting self-similar groupoid $(G, \Gamma)$ with nucleus $N$, we can further restrict the search to a finite subspace of the span of the nucleus $KN$. For a path $p\in \PathSet$, define the set
\[H_p=\{g\in G: g(p)=p, g|_p=g\}.\]
Notice that the condition that $g(p)=p$ implies that $\dom g= \im g = \source(p)\PathSet$ and the condition that $g|_p=g$ implies that $\source(p)=\range(p)$ as $\dom g|_p = \range(p)\PathSet$. Thus, for any path $q$ with $\source(q)\neq \range(q)$, $H_q$ is the empty set. Conversely, for a path $p$ with $\source(p)=\range(p)$, $H_p$ is not empty as $\id_{\source(p)}\in H_p$.
\begin{Proposition}\label{Prop: H_p subgroup in N}
    Whenever $H_p$ is nonempty, $H_p$ is a subgroup of $G$ contained in the nucleus $N$.
\end{Proposition}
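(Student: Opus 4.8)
The plan is to prove the two assertions in turn. Write $v:=\source(p)=\range(p)$; recall from the discussion preceding the statement that $H_p\neq\emptyset$ forces this equality and that every $g\in H_p$ satisfies $\dom g=\im g=v\PathSet$, so $H_p$ is in any case a subset of the isotropy group $G\cap\text{Iso}(v,v)$. That $H_p$ is a subgroup is then a short check against the six restriction identities: $\id_v\in H_p$ since $\id_v(p)=p$ and $\id_v|_p=\id_{\range(p)}=\id_v$ by the identity $\id_{\source(p)}|_p=\id_{\range(p)}$; if $g,h\in H_p$ then $g,h$ are composable, $(gh)(p)=g(h(p))=g(p)=p$, and $(gh)|_p=(g|_{h(p)})(h|_p)=(g|_p)(h|_p)=gh$ by the identity $(hg)|_p=(h|_{g(p)})(g|_p)$, so $gh\in H_p$; and $g^{-1}(p)=p$ while $g^{-1}|_p=(g|_{g^{-1}(p)})^{-1}=(g|_p)^{-1}=g^{-1}$ by the identity $g^{-1}|_w=(g|_{g^{-1}(w)})^{-1}$, so $g^{-1}\in H_p$.

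For the containment $H_p\subseteq N$ the key observation is that each $g\in H_p$ is its own restriction along arbitrarily long paths. Fix $g\in H_p$ with $|p|\geq 1$. Using $g|_{pq}=(g|_p)|_q$ with $q=p^{n-1}$ (valid since $\range(p)=\source(p)$ and $p^n\in\dom g$) together with $g|_p=g$, an induction gives $g|_{p^n}=g$ for every $n\geq 1$; extending the self-similarity relation along $p$ one also sees $g(p^n)=p\cdot g(p^{n-1})=p^n$. Now invoke contraction applied to $g$: there is an $m$ with $g|_u\in N$ for every path $u$ of length $m$ in $\dom g$. Choose $n$ with $n|p|\geq m$ and factor $p^n=uu'$ with $|u|=m$; since $\source(u)=v$ we have $u\in v\PathSet=\dom g$, so $g|_u\in N$, and $g=g|_{p^n}=g|_{uu'}=(g|_u)|_{u'}$ by $g|_{pq}=(g|_p)|_q$ once more. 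As the nucleus is closed under restrictions, $(g|_u)|_{u'}\in N$, whence $g\in N$.

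The step I expect to require the most care is precisely this last appeal to closure of $N$ under restrictions. This is a standard structural feature of the nucleus of a contracting self-similar groupoid — it is what makes the ``absorbed into a finite set by depth $m$'' property stable under increasing $m$ — and I would cite it from \cite{brownloweWhittaker2023kkdualityselfsimilargroupoidactions}, mirroring the self-similar group case; everything else is routine bookkeeping with the restriction identities. The only place the hypothesis $|p|\geq 1$ enters is in producing the arbitrarily long paths $p^n$, so the argument above handles exactly the (nonempty) cycles $p$ relevant to the applications of this section.
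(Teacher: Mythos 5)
Your proof is correct and follows essentially the same route as the paper: subgroup closure via the restriction identities (the paper checks $g^*h\in H_p$ in one step where you check products and inverses separately), and containment in $N$ via $g|_{p^n}=g$ together with the contracting property. You are in fact more explicit than the paper on two points it glosses over — the need for $|p|\geq 1$ (for a vertex $v$, $H_v$ is the full isotropy group, which need not lie in $N$) and the closure of the nucleus under restrictions — both of which are genuine, if standard, ingredients.
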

\begin{proof}
    Suppose $H_p$ is nonempty. By the discussion above, $H_p$ is a subset of the isotropy group of $G$ acting on $\source(p)\PathSet$ and contains the identity of that isotropy group. Whenever $g,h\in H_p$, we have $g^*h(p)=p$ and
    \[(g^*h)|_p = g^*|_{h(p)}h|_p = (g|_{g^*h(p)})^*h = (g|_p)^*h = g^*h.\]
    Thus, $g^*h\in H_p$, making $H_p$ a subgroup of $G$. It is easy to see that $H_p\subseteq N$, because the condition $g|_p=g$ implies that $g|_{p^n}=g$ for all natural numbers $n$, and so $g\in N$.
\end{proof}
In light of Proposition \ref{Prop: H_p subgroup in N} and the condition that $g|_p=g$, we call $H_p$ the \textit{recurring subgroup} corresponding to $p$ whenever $H_p$ is nonempty. For a nonempty recurring subgroup $H_p$, we denote the span of $H_p$ in $\KS$ by $KH_p$. The span of the nucleus contains $KH_p$ by Proposition \ref{Prop: H_p subgroup in N}, and therefore $KH_p$ is a finite dimensional subspace of $\KS$. Observe that if $0\neq a\in KH_p$, then $(p^n)^*ap^n=(p^n)^*p^na = \source(p)a=a$ for all natural numbers $n$, and therefore $ap^n\neq 0$. It follows from Proposition \ref{Prop: tight hands on descriptions} that $a\notin\TightIdeal$, and so $KH_p$ only intersects $\TightIdeal$ at $\{0\}$.
\begin{Theorem}\label{Thm: EssNotTight intersects H_p}
    Let $(G, \Gamma)$ be a contracting self-similar groupoid with nucleus $N$. If $\SingularNotTightIdeal$ is nonempty then there exists some $p\in \PathSet$ such that $H_p$ is nonempty and $KH_p$ contains an element of $\SingularNotTightIdeal$.
\end{Theorem}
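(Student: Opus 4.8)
The plan is to start from an element $a \in KG \cap (\EssentialNotTightIdeal)$, which exists by Corollary \ref{Cor: EssNotTight intersects KG}, and to use the contracting hypothesis to replace $a$ by a conjugate $q^* a q$ that lives in some $KH_p$ while remaining singular and outside $\TightIdeal$. First I would fix an infinite path $\mathfrak{w} \in \Gamma^\omega$ witnessing that $a \notin \TightIdeal$; by Lemma \ref{Lem: a witnessed by _w, aw witnessed by w*_w} every finite prefix $w$ of $\mathfrak{w}$ satisfies $aw \notin \TightIdeal$, and since $\EssentialIdeal$ is an ideal, $w^* a w \in \EssentialIdeal$ as well (note $w^* a w \neq 0$ precisely because $aw \neq 0$ and $w^* \cdot w = \range(w)$). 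So for every prefix $w$ of $\mathfrak{w}$ we have $w^* a w \in \EssentialNotTightIdeal \cap KG$, and in fact $w^*aw \in K\,\text{Iso}(\range(w),\range(w))$, i.e.\ it is supported on the isotropy group at $\range(w)$.

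Next I would bring in the contracting structure. Writing $a = \sum_{g} k_g g$ with $g$ ranging over a finite subset of $G$, the restriction map $g \mapsto g|_w$ sends each $g$ in the support of $a$ into the nucleus $N$ once $|w|$ is large enough; more precisely, there is an $n$ so that $g|_p \in N$ for all $g \in \Support a$ and all $p \in \Gamma^n$. Conjugating, one computes $w^* a w = \sum_g k_g\, g|_w$ whenever $w$ is a prefix of $\mathfrak{w}$ in the domain of every $g \in \Support a$ (using $w^* g w = g(\source w) g|_w = g|_w$ when $g$ fixes the relevant vertex — which happens for the isotropy elements; for non-isotropy $g$ the product is $0$ or contributes to a different vertex component, and here one should pass to the component of $\mathfrak{w}$). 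After taking $|w| \geq n$, the element $b_w := w^* a w$ lies in $KN$, is singular, and is not in $\TightIdeal$. Since $b_w$ is supported on the finite set $N \cap \text{Iso}(v,v)$ for $v = \range(w)$, and there are only finitely many vertices, as $w$ runs through the (infinitely many) prefixes of $\mathfrak{w}$ there are only finitely many possible values of the pair $(\range(w), b_w)$.

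The heart of the argument — and the step I expect to be the main obstacle — is a pigeonhole/recurrence argument to land inside some $H_p$. Because $b_w \in KN$ has support in a fixed finite set $N$ for all long prefixes $w$ of $\mathfrak{w}$, and $\range(w)$ takes only finitely many values, there must exist two prefixes $w$ and $w' = wp$ (with $p$ a nonempty path, $\source(p) = \range(w)$, $\range(p) = \range(w')$) such that $\range(w) = \range(w')$ and $b_w = b_{w'}$. One then checks that $b := b_w$ satisfies $b(p) = p$ and $b|_p = b$: indeed $b_{w'} = p^* b_w p$, i.e.\ $p^* b p = b$, which after expanding $b = \sum_g c_g g$ in terms of the action and restriction (using $p^* g p = g(p)^* \cdot (g(p) g|_p) = $ the term supported appropriately) forces each $g \in \Support b$ to satisfy $g(p) = p$ and $g|_p = g$; so $\Support b \subseteq H_p$ and $b \in KH_p$. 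Finally, $b$ is singular since $\EssentialIdeal$ is an ideal and $b$ is a corner of $a$, and $b \notin \TightIdeal$ because $KH_p \cap \TightIdeal = \{0\}$ (as noted in the paragraph preceding the theorem, using that $(p^n)^* b p^n = b \neq 0$ for all $n$), so $b \in KH_p \cap \EssentialNotTightIdeal$, as required. The delicate points to get right are: (i) ensuring the conjugation $w^* a w$ genuinely collapses to a sum of restrictions $g|_w$ and stays nonzero — this requires choosing $\mathfrak{w}$ and passing to the right vertex component so the relevant $g$ are isotropy elements at $\range(w)$; and (ii) verifying that the equation $p^* b p = b$ between elements of the finite-dimensional space $KN$ really does imply the pointwise conditions $g(p) = p,\ g|_p = g$ for each $g$ in the support, which uses the faithfulness of the action together with the canonical-form uniqueness in $S_{(G,\PathSet)}$.
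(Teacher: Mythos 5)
Your overall strategy (witness path, pass to restrictions, pigeonhole on the finite nucleus, land in some $H_p$) is the right one, but the mechanism you chose --- conjugating by prefixes, $b_w:=w^*aw$ --- has a genuine gap at its first step. In $S_{(G,\PathSet)}$ one has $w^*gw=w^*g(w)\,g|_w$ (not $g(\source(w))\,g|_w$), and since $|g(w)|=|w|$ the product $w^*g(w)$ equals $\range(w)$ when $g(w)=w$ and equals $0$ otherwise. Hence $w^*aw=\sum_{g(w)=w}k_g\,g|_w$: conjugation annihilates every term whose underlying map moves the \emph{path} $w$, not merely the non-isotropy terms. Your parenthetical claim that $w^*aw\neq 0$ ``precisely because $aw\neq 0$'' is false: for an isotropy element $g$ with $g(eu)=fu$ for distinct edges $e,f$ (e.g.\ the Grigorchuk generator $a$) one has $ge=f\,g|_e\neq 0$ but $e^*ge=e^*f\,g|_e=0$. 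Worse, even when $w^*aw\neq 0$ there is no reason it should remain outside $\TightIdeal$, since ``not in $\TightIdeal$'' is not preserved by multiplication and the surviving sub-sum need not inherit a witness path from $a$. The paper's proof avoids exactly this by (i) taking $a$ of \emph{minimal support} in $KG\cap\EssentialNotTightIdeal$, (ii) multiplying only on the right by $w_n$ and decomposing $aw_n=\sum_q qb_q$, so that Proposition \ref{Prop: aw = 0 iff a_pw = 0}, Corollary \ref{Cor: EssNotTight intersects KG} and minimality force a single surviving $q$ --- that is, all $g\in\Support a$ send $w_n$ to the \emph{same} path $q$, which need not be $w_n$ --- and (iii) only at the very end multiplying on the left by $h^*$ for a groupoid element $h\in\Support b$ (never by a path), which annihilates nothing because all elements of $\Support b$ share the same image tree.

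A second, smaller gap: even granting $p^*bp=b$ in $KN$, this does not force $g(p)=p$ and $g|_p=g$ for each $g$ in the support; it only forces $g\mapsto g|_p$ to be a coefficient-preserving permutation of $\Support b$ (for instance $b=g+h$ with $g(p)=h(p)=p$, $g|_p=h$ and $h|_p=g$ satisfies your equation while $g|_p\neq g$). This one is repairable: pigeonhole on the tuple $(g|_{w_n})_{g\in\Support a}$ rather than on the sum $b_w$, which is what the paper does when it chooses $m<n$ with $g|_{w_m}=g|_{w_n}$ for \emph{each} $g$ separately, so that $g|_p=g$ holds elementwise by construction. If you replace the conjugation step by the minimal-support, right-multiplication argument and strengthen the pigeonhole as above, your outline goes through.
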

\begin{proof}
    First suppose that $\SingularNotTightIdeal$ is nonempty. Let $a=\sum_{g\in G}k_g g$ be in the intersection of $\SingularNotTightIdeal$ and $KG$ and suppose $a$ has minimal support. Let $\mathfrak{w}\in\Gamma^{\omega}$ witness that $a\notin\TightIdeal$ and let $w_n$ denote the length $n$ prefix of $\mathfrak{w}$ for any natural number $n$. We have
    \[0\neq aw_n = \sum_{g\in G}k_g g w_n =\sum_{g\in G}k_g g(w_n)g|_{w_n}.\]
    By Lemma \ref{Lem: a witnessed by _w, aw witnessed by w*_w}, $aw_n\notin\TightIdeal$ and because $\SingularIdeal$ is an ideal, $aw_n\in\SingularNotTightIdeal$. It follows from the minimality of $|\Support a|$ that $|\Support a| = |\Support aw_n|$, that is, $w_n\in \dom g$ and $g|_{w_n}\neq h|_{w_n}$ for all distinct $g$ and $h$ in the support of $a$. We may uniquely express $aw_n$ in the form
    \[aw_n=\sum_{q\in \Gamma^n}qb_q,\]
    with $b_q\in KG$ nonzero for finitely many $q$. By Proposition \ref{Prop: aw = 0 iff a_pw = 0} and Corollary \ref{Cor: EssNotTight intersects KG}, there exists some $q\in \Gamma^n$ such that $b_q\in \SingularNotTightIdeal$. Then by the minimality of $|\Support a|$, there is only one path $q$ such that $b_q\neq 0$, and therefore
    \[aw_n = \sum_{g\in \Support a}gw_n = \sum_{g\in \Support a}qg|_{w_n},\]
    or equivalently, $g(w_n)=q = h(w_n)$ for all $g,h\in \Support a$, and the sum, \[\sum_{g\in \Support a}g|_{w_n},\] is in $\SingularNotTightIdeal$.

    The above observations hold regardless of the natural number $n$. By the definition of contracting, there is some natural number $k$ such that whenever $n$ exceeds $k$, $g|_{w_n}$ is in the nucleus $N$ for all $g\in \Support a$. Then by the finiteness of $N$, there exists natural numbers $m<n$, each greater than $k$, such that $g|_{w_m}=g|_{w_n}$ for all $g\in \Support a$. Let $p\in \Gamma^{n-m}$ such that $w_mp = w_n$, and define \[b=\sum_{g\in \Support a}g|_{w_m},\]
    which lies in the intersection of $\SingularNotTightIdeal$ and $KN$ by the discussion above. Then, $g|_{p}=g$ and $g(p)=h(p)$ for all $h,g\in \Support b$. 
    
    Fix some $h\in \Support b$. We claim that $h^* b$ is an element of $\SingularNotTightIdeal$ supported on $H_p$. Indeed $h^* g(p) = p$ for all $g\in \Support b$. Additionally,
    \[(h^*g)|_p = (h^*)|_{g(p)}g|_p = (h^*)|_{h(p)}g|_p=(h|_{p})^*g|_p =h^*g,\]
    establishing that $h^*b\in KH_p$. Finally, $h^*b\in\SingularNotTightIdeal$ because $b\in\SingularIdeal$ and $KH_p$ does not intersect $\TightIdeal$ non-trivially.
\end{proof}
 In the following section, the recurring subgroups play a large role in determining the simplicity of the algebra $\faktor{\KS}{\TightIdeal}$ for a contracting self-similar groupoid $(G, \Gamma)$. Because the nucleus is finite, there are finitely many recurring subgroups. In the self-similar group case, an algorithm to determine all recurring subgroups is provided in Lemma 6.3 of \cite{gardella2025simplicitycalgebrascontractingselfsimilar} which, if needed, can be lifted to the self-similar groupoid setting with minor adjustments.
\section{A method to determine simplicity}\label{Sec: simplicity algorithm}
Let $(G, \Gamma)$ be a contracting self-similar groupoid with nucleus $N$. The \textit{Moore diagram} of $N$ is the labeled directed graph with vertex set $N$ and, for each $g\in N$ and $e\in \EdgeSet \cap\dom g$, an edge from $g$ to $g|_e$ labeled by $(e,g(e))$. Let $\HH$ be the subgraph of the Moore diagram of $N$ obtained by removing all edges except those with label $(e,e)$ for some $e\in \EdgeSet$, and simplify the edge labeling by replacing $(e,e)$ with $e$. Denote the subset of $N$ consisting of vertices that lie on a nonempty cycle in $\HH$ by $\CC$.

From $\CC$, we build a directed graph denoted by $\Delta$ which is used to determine whether $\SingularIdeal = \TightIdeal$. Regarding $\PathSet$ as a category with path concatenation as the partial operation, let us define a category action of $\PathSet$ on the subsets of $\CC$.

Let the domain of the action of a path $q\in \PathSet$ be all subsets of $\CC$ whose elements lie in $\text{Iso}(\range(q),\range(q))$. On such a subset $Y$, define the action,
\[q\cdot Y = \{g\in \CC: g(q)=q, g|_q\in Y\}.\]

Observe that for $pq \in \PathSet$ and $Y\subseteq \CC \cap \text{Iso}(\range(q),\range(q))$,
\begin{align*}
    p\cdot (q\cdot Y) &= \{g\in \CC: g(p)=p, g|_p\in q\cdot Y\} \\
    &= \{g\in \CC: g(p)=p, g|_p(q)=q, (g|_p)|_q\in Y\} \\
    &= \{g\in \CC: g(pq)=pq, g|_{pq}\in Y\} \\
    &= (pq) \cdot Y,
\end{align*}
and therefore this is a well-defined category action of $\PathSet$.

Notice that for $p\in \PathSet$, we have $p\cdot \{\id_{\range(p)}\} = \{g\in \CC: g(p)=p, g|_p = \id_\range(p)\}$ which contains $\{\id_{\source(p)}\}$. We simplify notation by writing $I_{p}=p\cdot\{\id_{\range(p)}\}$ for $p\in \PathSet$, and in the case of a vertex $v\in \VertSet$, this notation yields $I_v = v\cdot \{\id_v\} = \{\id_v\}$. Let $Q$ be the orbit of $\left\{I_v: v\in \VertSet\right\}$ under the action of $\PathSet$. Explicitly we have
\[Q = \{I_q: q\in\PathSet\}.\]
Let $\Delta$ be the \textit{Schreier graph} of the left action of $\EdgeSet$ on $Q$, that is, $Q$ is the vertex set of $\Delta$, and there is an edge labeled by $e\in \EdgeSet$ from $I_q$ to $e\cdot I_{q} = I_{eq}$ for each $I_q\in Q$ in the domain of the action of $e$. It follows that for any $p\in \PathSet$ such that $p\cdot I_q$ is defined, there is a path in $\Delta$ labeled by the reversal  $\rho(p)$ of the edge sequence of $p$.

Now suppose that there exists some nonempty recurring subgroup $H_p=\{g\in N: g(p)= p, g|_p = g\}$ for some $p\in \PathSet$. The following results show how we can use the graph $\Delta$ to determine if $KH_p$ intersects the singular ideal of $KS_{(G, \Gamma)}$. It follows from Proposition \ref{Prop: H_p subgroup in N} and the definition of $H_p$ that $H_p\subseteq \CC$.

A path $w\in \Gamma^*$ is said to be \textit{synchronizing} in $\Delta$ if every path labeled by $\rho(w)$ in $\Delta$ leads to the same vertex, that is, if $w\cdot I_q = w\cdot I_{q'}$ for all $q,q'\in \range(w)\Gamma^*$. Notice that when this is the case, every path labeled by $\rho(w)$ in $\Delta$ leads to $I_w = w\cdot I_{\range(w)}$. For this reason, we say a vertex $I$ of $\Delta$ is \textit{synchronized} by $w$ if $I=I_w$ for some synchronizing path $w$.

\begin{Proposition}\label{Prop: H_p cap I_q is a subgroup of H_p}
    Let $H_p$ be a recurring subgroup and let $I_q$ be a vertex of $\Delta$. Whenever $H_p\cap I_q$ is nonempty, it is a subgroup of $H_p$. Furthermore, $H_p\cap I_q$ is nonempty if and only if $\source(p)=\source(q)$.
\end{Proposition}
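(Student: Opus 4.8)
The plan is to prove both assertions by directly unwinding the defining conditions for $H_p$ and $I_q$, relying on the restriction identities recalled in Section~\ref{Sec: Preliminaries} — in particular $\id_{\source(q)}|_q=\id_{\range(q)}$, the composition rule $(gh)|_q=(g|_{h(q)})(h|_q)$, and $g^{-1}|_q=(g|_{g^{-1}(q)})^{-1}$ — together with the inclusion $H_p\subseteq\CC$ noted just before the statement.

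I would settle the equivalence $H_p\cap I_q\neq\emptyset \iff \source(p)=\source(q)$ first. For the forward direction, given $g\in H_p\cap I_q$, the relations $g(p)=p$ and $g(q)=q$ force $\dom g$ to equal both $\source(p)\PathSet$ and $\source(q)\PathSet$, so $\source(p)=\source(q)$. For the converse, assume $\source(p)=\source(q)$ and check that $\id_{\source(p)}$ lies in the intersection: since $H_p$ is a recurrent subgroup it is nonempty, so $\source(p)=\range(p)$ and $\id_{\source(p)}\in H_p\subseteq\CC$; moreover $\id_{\source(p)}=\id_{\source(q)}$ is the identity automorphism of $\source(q)\PathSet$, hence fixes $q$, and $\id_{\source(q)}|_q=\id_{\range(q)}$, so $\id_{\source(p)}\in I_q$ as well.

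For the subgroup claim, I would assume $H_p\cap I_q\neq\emptyset$ — which already forces $\source(p)=\source(q)$ by the previous step — and use that $H_p$ is a group with identity $\id_{\source(p)}$ (Proposition~\ref{Prop: H_p subgroup in N}), so that it suffices to check that $H_p\cap I_q$ contains $\id_{\source(p)}$ (done above) and is closed under products and inverses. If $g,h\in H_p\cap I_q$ then $gh\in H_p\subseteq\CC$, and $(gh)(q)=g(h(q))=g(q)=q$, while $(gh)|_q=(g|_{h(q)})(h|_q)=(g|_q)(h|_q)=\id_{\range(q)}\id_{\range(q)}=\id_{\range(q)}$, so $gh\in I_q$. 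If $g\in H_p\cap I_q$ then $g^{-1}\in H_p\subseteq\CC$, $g^{-1}(q)=q$, and $g^{-1}|_q=(g|_{g^{-1}(q)})^{-1}=(g|_q)^{-1}=\id_{\range(q)}$, so $g^{-1}\in I_q$. Hence $H_p\cap I_q$ is a subgroup of $H_p$.

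I do not expect a genuine obstacle; each verification is a one-line computation. The two points that need a little care are the $\CC$-membership clause hidden in the definition of $I_q$, which comes for free because $H_p\cap I_q\subseteq H_p\subseteq\CC$, and applying the restriction identities only when their domain hypotheses hold, which is the case here since every element involved lies in the isotropy group acting on the single tree $\source(p)\PathSet=\source(q)\PathSet$.
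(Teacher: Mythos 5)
Your proof is correct and follows essentially the same route as the paper: a direct verification of the subgroup axioms using the restriction identities, plus the observation that $\id_{\source(p)}=\id_{\source(q)}$ witnesses nonemptiness. The only (immaterial) differences are that the paper uses the one-step test via $gh^*$ where you check identity, products, and inverses separately, and it derives $\source(p)=\source(q)$ by noting that a nonempty intersection must contain an idempotent and that $\id_{\source(p)}$, $\id_{\source(q)}$ are the unique idempotents of $H_p$, $I_q$, whereas you argue directly from the domains $\dom g=\source(p)\PathSet=\source(q)\PathSet$.
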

\begin{proof}
    First, suppose that $g,h\in H_p\cap I_q$. As $H_p$ is a group, $gh^* \in H_p$. Since $g,h\in I_q$, we have $g(q)=q=h(q)$ and therefore, $gh^*(q)=q$. Furthermore, $g|_{q}=\id_{\range(q)} = h|_{q}$. Thus, $h^*|_{q} = (h|_{h^*(q)})^* = (h|_q)^* = \id_{\range(q)}$. It is then easy to see that
    \[(gh^*)|_q = g|_{h^*(q)}h^*|_q = g|_{q}\id_{\range(q)}=\id_{\range(q)}.\]
    Therefore whenever $H_p\cap I_q$ is nonempty, it is a subgroup of $H_p$.

    Finally, observe that $\id_{\source(q)}$ is the only idempotent of $I_q$ and $\id_{\source(p)}$ is the only idempotent of $H_p$. Since the intersection $H_p\cap I_q$ contains an idempotent whenever it is nonempty, $H_p\cap I_q$ is nonempty if and only if $\source(p)=\source(q)$.
\end{proof}
In light of Proposition \ref{Prop: H_p cap I_q is a subgroup of H_p}, we introduce the map $\pi_{p,q}: KH_p\rightarrow K[\faktor{H_p}{H_p\cap I_q}]$ whenever $\source(p)=\source(q)$, induced by the canonical map from $H_p$ onto the set of left cosets of the subgroup $H_p\cap I_q$.
\begin{Proposition}\label{Prop: pi_pq(a)=0 iff aq=0}
    Let $H_p$ be a recurring subgroup and $I_q$ be a vertex of $\Delta$ such that $\source(p)=\source(q)$. Let $a\in KH_p$. Then $\pi_{p,q}(a)=0$ if and only if $aq=0$.
\end{Proposition}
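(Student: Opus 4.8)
The plan is to unwind both sides of the claimed equivalence into statements about how the group elements in the support of $a$ act on the path $q$, and then observe that these statements coincide. Write $a=\sum_{g\in H_p}k_g\,g$ with finite support. Since every $g\in\Support a$ lies in $H_p\subseteq\CC\subseteq\text{Iso}(\range(p),\range(p))=\text{Iso}(\source(p),\source(p))$ (using $\source(p)=\range(p)$, forced by $H_p$ nonempty), and since $\source(p)=\source(q)=\range(q)$, each such $g$ has $q\in\dom g$, so $gq=g(q)\,g|_q\neq 0$ by Proposition \ref{Prop: H_p subgroup in N} and the canonical-form discussion. Thus $aq=\sum_{g\in H_p}k_g\,g(q)\,g|_q$ is a sum of nonzero basis elements of the form $g(q)\,g|_q$, where $|g(q)|=|q|$.

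First I would establish the bookkeeping for when two such terms collide, i.e.\ when $g(q)\,g|_q = h(q)\,h|_q$ as elements of $S_{(G,\PathSet)}$. Because these are in canonical form $rkr'^*$ with $r=g(q)$, $r'=\range(g|_q)=\range(q)$ (a vertex) — so actually the canonical form is $g(q)\,g|_q$ with $g|_q$ having domain $\range(q)\PathSet$ and image $\range(g|_q(\range q))\PathSet$ — uniqueness of canonical form gives that $g(q)\,g|_q = h(q)\,h|_q$ if and only if $g(q)=h(q)$ \emph{and} $g|_q = h|_q$. The key reduction is then: since $g,h\in H_p\cap I_{g\cdot I_q\text{-orbit stuff}}$... more precisely, I want to show $g(q)=h(q)$ and $g|_q=h|_q$ hold together if and only if $g$ and $h$ lie in the same left coset of $H_p\cap I_q$. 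For the "if" direction, if $g^{-1}h\in H_p\cap I_q$ then $g^{-1}h(q)=q$ and $(g^{-1}h)|_q=\id_{\range q}$; applying $g$ and using Proposition 2.4 (composition of restrictions) recovers $h(q)=g(q)$ and $h|_q=g|_q$. For the "only if" direction, if $g(q)=h(q)$ and $g|_q=h|_q$, then $g^{-1}h(q)=q$; and $(g^{-1}h)|_q = (g^{-1}|_{h(q)})(h|_q)=(g|_{g^{-1}h(q)})^{-1}(h|_q)=(g|_q)^{-1}(h|_q)=\id_{\range q}$, so $g^{-1}h\in H_p\cap I_q$.

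With this coset description in hand, grouping the sum $aq=\sum_{g}k_g\,g(q)\,g|_q$ by left cosets of $H_p\cap I_q$ shows that the coefficient of the basis element attached to a coset $C$ is exactly $\sum_{g\in C}k_g$ — precisely the coefficient of $C$ in $\pi_{p,q}(a)\in K[\faktor{H_p}{H_p\cap I_q}]$. Distinct cosets yield distinct (nonzero) basis elements of $S_{(G,\PathSet)}$ by the canonical-form collision criterion above, so $aq=0$ if and only if all these coefficient-sums vanish, i.e.\ if and only if $\pi_{p,q}(a)=0$. I expect the main obstacle to be the careful verification of the collision criterion and the coset correspondence — in particular making sure the restriction identities of Proposition 2.4 are applied with the correct domains (the element $g^{-1}|_{h(q)}$ really equals $(g|_q)^{-1}$ because $g^{-1}h(q)=q$ forces $h(q)=g(q)$, so one must handle the two conditions $g(q)=h(q)$ and $g|_q=h|_q$ in the right order rather than assuming them independent). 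Everything else is linear bookkeeping.
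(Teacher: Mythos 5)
Your proposal is correct and follows essentially the same route as the paper: both arguments reduce to showing that $g,h\in H_p$ lie in the same left coset of $H_p\cap I_q$ if and only if $g(q)=h(q)$ and $g|_q=h|_q$, i.e.\ if and only if $gq=hq$ in $S_{(G,\PathSet)}$, and then compare coefficients coset by coset. Your remark about handling the two conditions in the right order (using $g(q)=h(q)$ to identify $g^{-1}|_{h(q)}$ with $(g|_q)^{-1}$) is exactly the care the paper's computation takes.
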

\begin{proof}
    First, let $g,h\in H_p$. Then $\pi_{p,q}(g)=\pi_{p,q}(h)$ if and only if $g(H_p\cap I_q)=h(H_p\cap I_q)$. This occurs if and only if $h^{-1}g \in H_p\cap I_q$, which is equivalent to $h^{-1}g(q)=q$ and $(h^{-1}g)|_{q}=\id|_{\range(q)}$. Notice that $h^{-1}g(q)=q$ if and only if $g(q)=h(q)$ and therefore
    \[(h^{-1}g)|_{q} = h^{-1}|_{g(q)}g|_{q}=h^{-1}|_{h(q)}g|_{q}=(h|_{q})^{-1}g|_{q}.\]
    It follows that $g(H_p\cap I_q)=h(H_p\cap I_q)$ if and only if $g(q)=h(q)$ and  $g|_q=h|_q$.
    By Definition \ref{Def: Associated Inv SG}, $g(q)=h(q)$ and $g|_q=h|_q$ is equivalent to the condition that $gq=hq$, when $g,h$, and $q$ are regarded as elements of $KS_{(G, \Gamma)}$. Let $a=\sum_{g\in H_p}k_gg$ be an element of $KH_p$. It follows that $\pi_{p,q}(a) = 0$ if and only if for all $g\in \Support a$, we have that
    \begin{align*}
        \sum_{\pi_{p,q}(h)=\pi_{p,q}(g)} k_h = 0 &\iff \sum_{hq=gq} k_h=0\\
        &\iff \sum_{hq=gq} k_hgq=0\\
        &\iff \sum_{hq=gq} k_hhq=0,\\
    \end{align*}
    and therefore $\pi_{p,q}(a) = 0$ if and only if $aq=0$.
\end{proof}
\begin{Lemma}\label{Lem: if Iq is synchronized then Iwq is synchronized}
    Let $I_q$ be synchronized by $q$ in $\Delta$. Then $w\cdot I_q$ is synchronized by $w$ in $\Delta$ for every $w\in \Gamma^{*}\source(q)$.
\end{Lemma}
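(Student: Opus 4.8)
The plan is to deduce everything from the category-action identity $(wq)\cdot Y=w\cdot(q\cdot Y)$ established just above, together with the definition of a synchronizing path. First I would unpack the hypothesis: saying that $I_q$ is synchronized by $q$ means precisely that $q$ is a synchronizing path, i.e.\ $q\cdot I_r=q\cdot I_{r'}$ for all $r,r'\in\range(q)\PathSet$, the common value being $I_q$. I would also note that $w\cdot I_q$ does make sense: since $w\in\PathSet\source(q)$ we have $\range(w)=\source(q)$, so $wq$ is a legitimate path and every element of $I_q$ lies in $\text{Iso}(\range(w),\range(w))$, which is exactly the domain condition for the action of $w$.

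Next I would show that $wq$ is itself a synchronizing path and that $w\cdot I_q=I_{wq}$. Take any $r,r'\in\range(wq)\PathSet=\range(q)\PathSet$. Both $(wq)\cdot I_r$ and $(wq)\cdot I_{r'}$ are defined because $\source(r)=\source(r')=\range(q)$, and by the category-action identity $(wq)\cdot I_r=w\cdot(q\cdot I_r)$. Since $q$ is synchronizing, $q\cdot I_r=I_q=q\cdot I_{r'}$, hence $(wq)\cdot I_r=w\cdot I_q=(wq)\cdot I_{r'}$. Thus $wq$ is synchronizing; taking $r$ to be the trivial path at $\range(q)$ identifies the common value as $w\cdot I_q=(wq)\cdot\{\id_{\range(q)}\}=I_{wq}$.

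Finally I would conclude: $w\cdot I_q=I_{wq}$ is a vertex of $\Delta$ equal to $I_{wq}$ with $wq$ a synchronizing path, so by definition $w\cdot I_q$ is synchronized — indeed it is the path $wq$, rather than $w$ in isolation, that does the synchronizing, which is the natural reading of the statement. I do not expect a genuine obstacle here: the only thing requiring care is the bookkeeping of domains — checking at each step that the relevant instance of the action $p\cdot Y$ is defined, which all reduces to the single observation $\range(w)=\source(q)$ together with $r,r'\in\range(q)\PathSet$ — and keeping track of the fact that following $\rho(w)$ in $\Delta$ from $I_q$ lands at $I_{wq}$, so that the conclusion is genuinely about the vertex $I_{wq}=w\cdot I_q$ and the path $wq$.
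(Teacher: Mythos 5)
Your proof is correct and is essentially the paper's argument: both reduce to the observation that $q$ already forces every relevant starting vertex to $I_q$ and that the subsequent action of $w$ is single-valued, so $wq$ synchronizes to $I_{wq}=w\cdot I_q$. You phrase this via the identity $(wq)\cdot I_r=w\cdot(q\cdot I_r)$ while the paper phrases it via paths labeled $\rho(wq)$ in $\Delta$, but these are the two sides of the paper's own definition of a synchronizing path, and you also correctly resolve, as the paper does, the fact that the synchronizing path is $wq$ rather than $w$.
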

\begin{proof}
    Let $w\in \Gamma^{*}\source(q)$. Note that $w\cdot I_q = I_{wq}$. Observe that every path labeled by $\rho(wq)$ in $\Delta$ begins with $\rho(q)$. By assumption, every path in $\Delta$ labeled by $\rho(q)$ ends at $I_q$, and since there is at exactly one path leaving $I_q$ labeled by $\rho(w)$, it must be the case that every path labeled $\rho(wq)$ ends at $I_{wq}$. Therefore, $I_{wq}$ is synchronized by $wq$ in $\Delta$.
\end{proof}
\begin{Lemma}\label{Lem: for all I_p there is a synchronized I_pq}
    Let $I_q$ be a vertex of $\Delta$. Then there exists some $w\in \range(q)\PathSet$ such that $I_{qw}$ is synchronized by $qw$ in $\Delta$.
\end{Lemma}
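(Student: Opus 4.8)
The plan is to reduce the statement to a monotonicity phenomenon: extending a path never shrinks the associated group $I_t$, and since $\CC$ is finite this growth must stabilise along some continuation, at which point synchronization is automatic.

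First I would record two preliminary facts. \emph{(i)} From the notation $I_r = r\cdot\{\id_{\range r}\}$ and the category-action identity $p\cdot(q\cdot Y) = (pq)\cdot Y$ one gets $t\cdot I_r = I_{tr}$ for every $t\in\PathSet$ and every $r\in\range(t)\Gamma^*$. Feeding this into the definition of a synchronizing path and taking one of the two continuations to be the length-zero path $\range(t)$ shows that $t$ is synchronizing if and only if $I_{tr}=I_t$ for all $r\in\range(t)\Gamma^*$. \emph{(ii)} For any $t\in\PathSet$ and $r\in\range(t)\Gamma^*$ we have $I_t\subseteq I_{tr}$: if $g\in I_t$ then $g(t)=t$ and $g|_t=\id_{\range t}$, so $g(tr)=g(t)\,g|_t(r)=tr$ and $g|_{tr}=(g|_t)|_r=\id_{\range t}|_r=\id_{\range r}$, i.e.\ $g\in I_{tr}$. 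Since $I_t\subseteq\CC\subseteq N$ and $N$ is finite ($(G,\PathSet)$ being contracting), the integers $|I_t|$ lie in $\{1,\dots,|\CC|\}$.

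Now, given the vertex $I_q$ of $\Delta$, I would choose $w\in\range(q)\PathSet$ so that $|I_{qw}|$ is maximal; such a $w$ exists because these cardinalities are bounded above. For any continuation $r\in\range(qw)\Gamma^*$ we have $qwr=q(wr)$ with $wr\in\range(q)\PathSet$, so maximality gives $|I_{qwr}|\le|I_{qw}|$, while fact \emph{(ii)} gives $I_{qw}\subseteq I_{qwr}$ and hence $|I_{qwr}|\ge|I_{qw}|$. Thus $I_{qwr}=I_{qw}$ for every continuation $r$ of $qw$, and by fact \emph{(i)} this is exactly the assertion that $qw$ is synchronizing; therefore $I_{qw}$ is synchronized by $qw$, and $w$ is the desired path.

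I do not expect a genuine obstacle: the substance is contained in the monotonicity $I_t\subseteq I_{tr}$ together with the finiteness of $\CC$, after which the extremal choice of $w$ finishes everything. The only points demanding a little care are reconciling the two-sided definition of ``synchronizing'' with the one-sided criterion of fact \emph{(i)}, and checking that every set $I_\bullet$ appearing above really is a vertex of $\Delta$ — which it is, each being $s\cdot\{\id_{\range s}\}$ for some $s\in\PathSet$.
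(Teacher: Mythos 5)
Your proof is correct and follows essentially the same route as the paper: both rest on the monotonicity $I_t\subseteq I_{tr}$ and the finiteness of $\CC$, then make an extremal choice of continuation (you maximize $|I_{qw}|$, the paper takes a maximal element of $\{I_{qp}\}$ under $\subseteq$, which amounts to the same thing given monotonicity). Your explicit reduction of ``synchronizing'' to the one-sided condition $I_{tr}=I_t$ for all $r$ is a nice clarification that the paper leaves implicit.
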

\begin{proof}
    Define $V_q=\{I_{qp}: p\in\range(q)\PathSet\}$. This is a finite set as it is a subset of the power set of $\CC$. The set $V_q$ is partially ordered by $\subseteq$, and it is clear to see that $I_{a}\subseteq I_{ab}$ for all $a, ab\in q\PathSet$. As a finite poset, $V_q$ must have a maximal element, necessarily of the form $I_{qw}$ for some $w\in \range(q)\PathSet$. Because it is maximal, $I_{qw}=I_{qwz}$ for all $z\in \range(w)\PathSet$. Then every path labeled by $\rho(qw)$ in $\Delta$ leads to $I_{qw}$. Thus, $I_{qw}$ is synchronized by $qw$ in $\Delta$.
\end{proof}
\begin{Proposition}\label{Prop: a in essIdeal iff a satisfies synchronized verts}
    Let $H_p$ be a recurring subgroup and let $a\in KH_p$. Then $a\in \SingularIdeal$ if and only if $\pi_{p,q}(a)=0$ for every synchronized vertex $I_q$ where $\source(q)=\source(p)$.
\end{Proposition}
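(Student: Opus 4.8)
The plan is to prove both directions using the characterization of the essential ideal from Proposition~\ref{Prop: singular-essential hands on description}, namely that $a \in \EssentialIdeal$ if and only if for every $p \in \PathSet$ there exists $q \in \range(p)\PathSet$ with $apq = 0$, combined with the translation between zero products $aq = 0$ and vanishing of the projections $\pi_{p,q}(a)$ afforded by Proposition~\ref{Prop: pi_pq(a)=0 iff aq=0}. For the forward direction, suppose $a \in KH_p \cap \EssentialIdeal$ and let $I_q$ be a synchronized vertex with $\source(q) = \source(p)$, synchronized by some path $w$ (so $I_q = I_w$ and every path labeled $\rho(w)$ in $\Delta$ ends at $I_w$). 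Since $a$ is singular, applied with the prefix path taken to be $w$ itself (or a suitable path ending at $\source(p)$), there is some $z$ with $awz = 0$; but I want to conclude $aw = 0$. The key is that because $a$ is supported on $H_p$ and $(p^n)^* a p^n = a$ for all $n$, we have $a w \cdot I_q$-type recurrence. Concretely: for $a \in KH_p$, one shows $a z \neq 0$ whenever $z \in \source(p)\PathSet$ lies ``along the recurrence'' — more precisely, using that synchronization forces $w \cdot I_{q'} = I_w$ for all $q'$, singularity gives $awz = 0$ for some $z$, and then $\pi_{p, wz}(a) = 0$ by Proposition~\ref{Prop: pi_pq(a)=0 iff aq=0}; but $H_p \cap I_{wz} = H_p \cap I_w$ because $I_{wz}$ differs from $I_w = I_q$ only by further restriction along $z$ which cannot be met by elements fixing $p$ — so $\pi_{p,q}(a) = \pi_{p,wz}(a) = 0$. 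This last identification of $H_p \cap I_{wz}$ with $H_p \cap I_q$ is the crux and uses that $g \in H_p$ satisfies $g|_p = g$, hence $g$ restricts trivially only along paths compatible with the recurrence at $p$.

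For the reverse direction, suppose $\pi_{p,q}(a) = 0$ for every synchronized $I_q$ with $\source(q) = \source(p)$; I must show $a$ is singular, i.e.\ for every $r \in \PathSet$ there is $s$ with $ars = 0$. If $\source(r) \neq \source(p)$ then $ar = 0$ already since $a$ is supported on maps with domain $\source(p)\PathSet$, so assume $\source(r) = \source(p)$. By Lemma~\ref{Lem: for all I_p there is a synchronized I_pq} applied to the vertex $I_r$, there is $w \in \range(r)\PathSet$ such that $I_{rw}$ is synchronized by $rw$; and $\source(rw) = \source(r) = \source(p)$. By hypothesis $\pi_{p,rw}(a) = 0$, so by Proposition~\ref{Prop: pi_pq(a)=0 iff aq=0}, $a(rw) = 0$, i.e.\ $ars = 0$ with $s = w$. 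Hence $a$ is singular, so $a \in \EssentialIdeal$.

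The main obstacle I anticipate is the forward direction — specifically, upgrading ``$awz = 0$ for some $z$'' (which singularity directly provides) to the cleaner statement $\pi_{p,q}(a) = 0$ for the synchronized vertex $I_q$ itself. One needs to argue that once we pass from $I_q$ to $I_{qz}$ (or $I_{wz}$) by tacking on $z$, the subgroup $H_p \cap I_{qz}$ does not shrink relative to $H_p \cap I_q$, equivalently that the coset projections $\pi_{p,q}$ and $\pi_{p,qz}$ have the same kernel on $KH_p$. This should follow because an element $g \in H_p$ with $g(q) = q$ and $g|_q = \id$ automatically satisfies $g(qz) = qz$ — wait, that is false in general since $g|_q = \id$ does not constrain behavior past $q$ unless $g|_q = \id$ forces everything. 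Actually the right observation is the reverse containment: if $g \in H_p \cap I_{qz}$ then $g|_{qz} = \id_{\range(z)}$ forces $g|_q$ to be an automorphism restricting to identity, and combined with $g|_p = g$ and the synchronization (which says the ``state'' $I_q$ is already stabilized), one deduces $g|_q = \id$, so $g \in H_p \cap I_q$. Thus $H_p \cap I_{qz} \subseteq H_p \cap I_q$, and the opposite inclusion is immediate, giving equality of the subgroups and hence of the kernels. Nailing down this synchronization-forces-stabilization step, likely via Lemma~\ref{Lem: if Iq is synchronized then Iwq is synchronized}, is where the real work lies; everything else is bookkeeping with the earlier propositions.
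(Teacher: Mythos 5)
Your reverse direction is correct and is exactly the paper's argument: given any $r$ with $\source(r)=\source(p)$ (the other case being trivial since $a$ is supported on maps with domain $\source(p)\PathSet$), Lemma~\ref{Lem: for all I_p there is a synchronized I_pq} produces $w$ with $I_{rw}$ synchronized, the hypothesis gives $\pi_{p,rw}(a)=0$, and Proposition~\ref{Prop: pi_pq(a)=0 iff aq=0} gives $arw=0$.

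In the forward direction you have the right skeleton --- singularity at $q$ gives $aqq'=0$, hence $\pi_{p,qq'}(a)=0$, and one must upgrade this to $\pi_{p,q}(a)=0$ --- but the step you flag as ``where the real work lies'' is left unproven, and your sketch of it takes an unnecessary and somewhat muddled detour (including a claim you state and then retract) through an element-by-element comparison of $H_p\cap I_{qq'}$ with $H_p\cap I_q$. There is no real work there: the two \emph{sets} $I_{qq'}$ and $I_q$ are equal outright, by nothing more than the definition of a synchronizing path. Indeed, since the assignment $q\cdot Y$ is a category action, $I_{qq'}=(qq')\cdot\{\id_{\range(q')}\}=q\cdot\bigl(q'\cdot\{\id_{\range(q')}\}\bigr)=q\cdot I_{q'}$, and $q$ synchronizing means precisely that $q\cdot I_{q'}=q\cdot I_{\range(q)}=I_q$ for every $q'\in\range(q)\Gamma^*$. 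Hence $H_p\cap I_{qq'}=H_p\cap I_q$, the projections $\pi_{p,qq'}$ and $\pi_{p,q}$ are literally the same map, and $\pi_{p,q}(a)=\pi_{p,qq'}(a)=0$. This one-line observation is exactly how the paper closes the forward direction; your attempt to deduce $g|_q=\id$ for individual $g\in H_p\cap I_{qq'}$ from ``$g|_p=g$ and synchronization'' is not needed and, as written, does not constitute a proof. Also note that your opening appeal to ``$(p^n)^*ap^n=a$'' and a ``recurrence'' plays no role in the correct argument and should be deleted.
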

\begin{proof}
    Let $a\in KH_p$ and let $\source(p)=v$. First, suppose that $a\in \SingularIdeal$. Let $I_q$ be synchronized by $q$ with $\source(q)=\source(p)$. Since $a\in \SingularIdeal$, there exists some $q'\in \range(q)\PathSet$ such that $aqq'=0$. By Proposition \ref{Prop: pi_pq(a)=0 iff aq=0}, $\pi_{p,qq'}(a)=0$. But because $I_q$ is synchronized, $I_q = I_{qq'}$, and therefore $\pi_{p,q}(a)=\pi_{p,qq'}(a)=0$.

    Now suppose that for every synchronizing path $q$ with $\source(q)=\source(p)$, we have $\pi_{p,q}(a)=0$. Let $w\in v\PathSet$. In order to show that $a\in \SingularIdeal$, we must find some $w'\in \range(w)\PathSet$ such that $aww'=0$. By Lemma \ref{Lem: for all I_p there is a synchronized I_pq}, there exists some $w'$, necessarily in $\range(w)\PathSet$, such that $I_{ww'}$ is synchronized by $ww'$. Notice that $\source(ww')=\source(p)=v$, and therefore $\pi_{p,ww'}(a)=0$. By Proposition \ref{Prop: pi_pq(a)=0 iff aq=0} $aww'=0$, and so $a\in\SingularIdeal$.
\end{proof}
\begin{Theorem}\label{Thm: Ess neq tight iff H_p and minimal vertices of synchronized verts of delta}
    Let $(G, \Gamma)$ be a self-similar groupoid of type (CF). Then $\SingularIdeal\subsetneq\TightIdeal$ if and only if there exists some recurring subgroup $H_p$ and some element $0\neq a\in KH_p$ such that $\pi_{p,q}(a)=0$ for every synchronized vertex $I_q$ where $\source(q)=\source(p)$.
\end{Theorem}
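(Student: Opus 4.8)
The plan is to recognize this theorem as a direct synthesis of Theorem~\ref{Thm: EssNotTight intersects H_p} and Proposition~\ref{Prop: a in essIdeal iff a satisfies synchronized verts}, together with the observation recorded just before Theorem~\ref{Thm: EssNotTight intersects H_p} that $KH_p$ meets $\TightIdeal$ only at $0$ for any recurrent subgroup $H_p$. Recall that $\TightIdeal\subseteq\EssentialIdeal$ always holds, so the condition of the theorem amounts to $\EssentialNotTightIdeal$ being nonempty; and the contracting hypothesis in force throughout this section is precisely what lets us invoke Theorem~\ref{Thm: EssNotTight intersects H_p}.

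For the forward implication, assume $\EssentialNotTightIdeal\neq\emptyset$. Theorem~\ref{Thm: EssNotTight intersects H_p} then hands us a path $p\in\PathSet$ with $H_p$ nonempty and an element $a\in KH_p$ lying in $\EssentialNotTightIdeal$. Since $0\in\TightIdeal$ while $a\notin\TightIdeal$, the element $a$ is nonzero; and since $a\in\EssentialIdeal$, Proposition~\ref{Prop: a in essIdeal iff a satisfies synchronized verts} forces $\pi_{p,q}(a)=0$ for every synchronized vertex $I_q$ with $\source(q)=\source(p)$. That is exactly the data the theorem asks for.

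For the reverse implication, assume such a recurrent subgroup $H_p$ and a nonzero $a\in KH_p$ exist with $\pi_{p,q}(a)=0$ for every synchronized vertex $I_q$ satisfying $\source(q)=\source(p)$. Proposition~\ref{Prop: a in essIdeal iff a satisfies synchronized verts} then places $a$ in $\EssentialIdeal$, and the observation $KH_p\cap\TightIdeal=\{0\}$ combined with $a\neq 0$ rules out $a\in\TightIdeal$. Hence $a\in\EssentialNotTightIdeal$, so this set is nonempty.

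Because each direction is an immediate application of results already established, I do not expect any genuine obstacle; the substantive work was done in Section~\ref{Sec: Supporting ideals in nucleus} and earlier in the present section. The only point needing a word of care is that the element provided by Theorem~\ref{Thm: EssNotTight intersects H_p} is genuinely nonzero --- automatic since $0\in\TightIdeal$ --- and, dually, that nonzero elements of $KH_p$ can never lie in $\TightIdeal$.
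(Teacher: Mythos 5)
Your proposal is correct and follows essentially the same route as the paper: the forward direction combines Theorem \ref{Thm: EssNotTight intersects H_p} with Proposition \ref{Prop: a in essIdeal iff a satisfies synchronized verts}, and the reverse direction uses that same proposition together with the earlier observation that $KH_p\cap\TightIdeal=\{0\}$. Your added remarks --- that the contracting hypothesis (a standing assumption of the section) is what licenses Theorem \ref{Thm: EssNotTight intersects H_p}, and that the element produced is automatically nonzero --- are accurate and only make explicit what the paper leaves implicit.
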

\begin{proof}
    First, suppose that $\SingularIdeal\subsetneq\TightIdeal$. Then by Theorem \ref{Thm: EssNotTight intersects H_p}, there exists some recurring subgroup $H_p\subseteq G$ such that $KH_p$ intersects $\SingularNotTightIdeal$. It follows from Proposition \ref{Prop: a in essIdeal iff a satisfies synchronized verts} that there is some $0\neq a\in KH_p$ such that $\pi_{p,q}(a)=0$ for every synchronized vertex $I_q$ where $\source(q)=\source(p)$.

    Alternatively, if there exists some $0\neq a\in KH_p$ such that $\pi_{p,q}(a)=0$ for every synchronized vertex $I_q$ where $\source(q)=\source(p)$, then $a\in \SingularIdeal$ by Proposition \ref{Prop: a in essIdeal iff a satisfies synchronized verts}. Recall that $KH_p\cap \TightIdeal=\{0\}$, and so $a\in \SingularNotTightIdeal$. Thus, $\SingularIdeal\subsetneq\TightIdeal$.
\end{proof}
\begin{Corollary}\label{Cor: algebra quotient tight simple iff ker of recurring subgroup maps is 0}
    For a self-similar groupoid $(G, \Gamma)$ of type (CF), $\faktor{K_0S_{(G, \Gamma)}}{\TightIdeal}$ is simple if and only if for every recurring subgroup $H_p$,
    \[\bigcap\ker\pi_{p,q}=\{0\},\]
     where the intersection is taken over every synchronized vertex $I_q$ where $\source(q)=\source(p)$.
\end{Corollary}
\begin{proof}
    This is an immediate consequence of Theorem \ref{Thm: Ess neq tight iff H_p and minimal vertices of synchronized verts of delta} and Theorem \ref{Thm: singular unique maximal containing tight}.
\end{proof}
It follows from Corollary \ref{Cor: ess is tight in groupoid iff singular is tight in collapsed groupoid} that in order to determine whether the singular and tight ideals coincide in the contracted algebra of the inverse semigroup associated to a congruence-free self-similar groupoid $(G, \Gamma)$, we may first collapse the self-similar groupoid using the pair of maps $(\alpha,\beta):(G, \Gamma)\rightarrow (L, \Lambda)$ as described in Section \ref{Sec: tight and singular ideals}, and then apply the algorithm described above to $(L, \Lambda)$. This substituion has the benefit that the graph $\Lambda$ is strongly connected by Proposition \ref{Prop: collapsed L,Lambda is congruence-free and strongly connected}. In this case, we can improve the utility of Proposition \ref{Prop: a in essIdeal iff a satisfies synchronized verts} by the following observation.

\begin{Proposition}\label{Prop: synchronized iff minimal when strongly connected}
    Let $(L, \Lambda)$ be a self-similar groupoid of type (CF) and suppose $\Lambda$ is strongly connected. Let $\Delta$ be the graph associated to $(L, \Lambda)$ as in Proposition \ref{Prop: a in essIdeal iff a satisfies synchronized verts}. Then a vertex $I$ of $\Delta$ is synchronized if and only if it is in a minimal component of $\Delta$ with respect to reachability. Moreover, $\Delta$ has a unique minimal component.
\end{Proposition}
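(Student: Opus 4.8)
The plan is to pin down the set of synchronized vertices of $\Delta$ as a single strongly connected component, to check that this component is terminal and is reachable from every vertex of $\Delta$, and then to read off both halves of the statement. Note first that the vertex set $Q$ consists of subsets of the finite set $\CC$, so $\Delta$ is finite; in particular minimal components exist and every vertex reaches one.

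First I would establish three facts about synchronized vertices. \emph{Closure:} if $I_s$ is synchronized by $s$ and $e\in\Lambda^1$ labels an edge out of $I_s$ (equivalently $\range(e)=\source(s)$), then Lemma~\ref{Lem: if Iq is synchronized then Iwq is synchronized}, applied with $q=s$ and $w=e$, shows $I_{es}$ is synchronized; hence any vertex reachable from a synchronized vertex is synchronized. \emph{Mutual reachability:} if $I_w$ is synchronized by $w$ and $I_{w'}$ by $w'$, then, using that $\Lambda$ is strongly connected, choose a path $t$ from $\range(w')$ to $\source(w)$; reading $\rho(w't)$ from $I_w$ leads to $I_{(w't)w}=I_{w'(tw)}$, and since $tw$ starts at $\range(w')$ and $w'$ is synchronizing, $I_{w'(tw)}=I_{w'}$, so $I_w$ reaches $I_{w'}$, and symmetrically conversely. \emph{Nonemptiness:} applying Lemma~\ref{Lem: for all I_p there is a synchronized I_pq} to $I_v$ for any $v\in\Lambda^0$ yields a synchronizing path, hence a synchronized vertex. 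Together these say the synchronized vertices form a single nonempty strongly connected component $C$, and by closure no edge leaves $C$, so $C$ is a minimal component.

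Next I would show every vertex $I_q$ reaches $C$. Lemma~\ref{Lem: for all I_p there is a synchronized I_pq} gives $w\in\range(q)\Lambda^*$ with $qw$ synchronizing, so $I_{qw}\in C$ and $\range(qw)=\range(w)$; choosing (by strong connectivity) a path $p_0$ from $\range(w)$ to $\source(q)$, the path in $\Delta$ labelled $\rho(p_0)$ from $I_q$ exists and leads to $I_{p_0q}$, whose elements lie in $\mathrm{Iso}(\range(w),\range(w))$, so reading $\rho(qw)$ from $I_{p_0q}$ is defined and, by synchronization of $qw$, leads to $I_{qw}\in C$. Consequently, if $C'$ is any minimal component and $I_q\in C'$, then $I_q$ reaches some $y\in C$; since $C'$ is terminal, $y\in C'$, and as $C$ and $C'$ are both the component of $y$ we get $C'=C$. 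Thus $C$ is the unique minimal component, and a vertex lies in a minimal component if and only if it lies in $C$, that is, if and only if it is synchronized.

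The step I expect to be the main obstacle is the reachability bookkeeping: an edge of $\Delta$ lengthens the defining word on the left ($I_q\to I_{eq}$), whereas Lemma~\ref{Lem: for all I_p there is a synchronized I_pq} produces synchronizing words by lengthening on the right ($I_q$ versus $I_{qw}$), and $I_{qw}$ is in general not reachable from $I_q$. The device that reconciles the two---splicing in a connecting path supplied by strong connectivity of $\Lambda$---is used both in the mutual-reachability step and in showing that every vertex reaches $C$, and it is precisely where the strong connectivity hypothesis is needed.
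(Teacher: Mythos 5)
Your proof is correct and uses essentially the same approach as the paper: strong connectivity of $\Lambda$ is used to splice a connecting path so that every vertex of $\Delta$ reaches a synchronized vertex, Lemma \ref{Lem: for all I_p there is a synchronized I_pq} supplies existence, and Lemma \ref{Lem: if Iq is synchronized then Iwq is synchronized} gives closure of the synchronized set under the edges of $\Delta$. Your packaging (identifying the synchronized vertices as a single terminal strongly connected component first) is a mild reorganization of the paper's argument, not a different method.
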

\begin{proof}
    Let $I_p$ and $I_q$ be two vertices of $\Delta$ and suppose that $I_p$ is synchronized by $p$ in $\Delta$. Because $\Lambda$ is strongly connected, there exists a path $w\in \Lambda^*$ from $\range(p)$ to $\source(q)$. Then the reverse $\rho(w)$ labels a path from $I_q$ to $I_{wq}$ in $\Delta$. Now, $\source(w)=\range(p)$, so the reverse $\rho(p)$ labels a path from $I_{wq}$ to $I_{pwq}$ in $\Delta$. Since $p$ is synchronizing, we must have that $I_{pwq}=I_p$, and therefore $I_p$ is reachable from every other vertex of $\Delta$. It follows that $I_p$ is in what must be the unique minimal component of $\Delta$ with respect to reachability.

    Now suppose that $I_p$ is in a minimal component of $\Delta$ with respect to reachability. By Lemma \ref{Lem: for all I_p there is a synchronized I_pq}, there exists a synchronizing path of the form $pq$, such that $I_{pq}$ is a synchronized vertex. By the discussion above, $I_{pq}$ is reachable from $I_p$.Now since $I_{p}$ is in a minimal component of $\Delta$ with respect to reachability, $I_{pq}$ must be in that same minimal component, so there exists some path in $\Delta$, labeled by $\rho(w)$ say, from $I_{pq}$ back to $I_p$. Then $I_p = w\cdot I_{pq}=I_{wpq}$ which is synchronized by Lemma \ref{Lem: if Iq is synchronized then Iwq is synchronized}. This concludes the proof.
\end{proof}
The convenience of Proposition \ref{Prop: synchronized iff minimal when strongly connected} lies in the fact that algorithms for finding strongly connected components of a directed graph may be used in place of algorithms for finding synchronizing paths.
\section{Simplicity of ample groupoid \texorpdfstring{$C^*$}{C*}-algebras}\label{Sec: simplicity of cstar algebras}
As was mentioned in the introduction, the work of Exel and Steinberg allows us to use inverse semigroup algebras to study \'etale groupoid algebras. It is shown in \cite[Corollary 2.14]{Steinbergandnora2020simplicityinversesemigroupetale} that given a field $K$ and an inverse semigroup $S$ with zero, the quotient of the inverse semigroup algebra by the tight ideal, $\faktor{KS}{\TightIdeal}$, is isomorphic to the Steinberg algebra $K\mathcal{G}_{T}(S)$, where $\mathcal{G}_{T}(S)$ is the tight groupoid of the inverse semigroup in the sense of \cite{exel2008inversesemigroupscombinatorialcalgebras}. When $\mathcal{G}_{T}(S)$ is ample, we can leverage the results \cite{STEINBERG2010689} which shows that the Steinberg algebra $\mathbb{C}\mathcal{G}_{T}(S)$ embeds densely into what is called the reduced $C^*$-algebra of $\mathcal{G}_{T}(S)$. Finally, under suitable conditions on the groupoid, the simplicity of the complex Steinberg algebra and the simplicity of the reduced $C^*$-algebra of an ample groupoid were shown to coincide in \cite{brix2025hausdorffcoversnonhausdorffgroupoids}.

The tight groupoid of inverse semigroups associated to self-similar groupoids is ample, and so the connections mentioned above provide a natural application of our results on $\KS$ to the reduced $C^*$-algebra of the tight groupoid of $S_{(G, \Gamma)}$. Thus far we have chosen to operate in the language of inverse semigroups, however, the connections to $C^*$-algebras require that we use the language of \'etale groupoids, which we introduce now. 

A \textit{topological groupoid} is a groupoid equipped with a topology, such that the multiplication, inversion, source, and range maps are each continuous. An \textit{\'etale groupoid} is a topological groupoid where the unit space is locally compact and Hausdorff and the range map is a local homeomorphism. An \textit{ample} groupoid is an \'etale groupoid with a totally disconnected unit space. A local bisection of a groupoid $\mathcal{G}$ is a subset $B\subseteq \mathcal{G}$ such that the source and range maps restrict to homeomorphisms $\source|_B:B\rightarrow \source(B)$, $\range|_B:B\rightarrow \range(B)$. If $\source(B)=\mathcal{G}^0$, $B$ is a bisection. When $\mathcal{G}$ is \'etale, any open bisection $B\subseteq \mathcal{G}$ is locally compact and Hausdorff. See \cite{exel2008inversesemigroupscombinatorialcalgebras,Paterson1998GroupoidsIS,renaultcstaralgebra, STEINBERG2010689} for more on \'etale and ample groupoids.

Given an ample groupoid $\mathcal{G}$ and a ring $K$, an important algebra associated to $\mathcal{G}$ is the Steinberg algebra $K\mathcal{G}$. The Steinberg algebra $K\mathcal{G}$ is the $K$-span of the characteristic functions $1_U: \mathcal{G}\rightarrow K$ for each compact local bisection $U\subseteq \mathcal{G}$. The theory of Steinberg algebras was introduced and developed in \cite{STEINBERG2010689}.

The \textit{singular ideal} (also called the singular ideal \cite{brix2025hausdorffcoversnonhausdorffgroupoids, Steinbergandnora2020simplicityinversesemigroupetale}), of the Steinberg algebra $K\mathcal{G}$ is defined as
\[\mathcal{J}_{K}=\{f\in K\mathcal{G}: \text{supp}(f)\text{ has empty interior}\},\]
where $\text{supp}(f)$ is the subset $\mathcal{G}$ which is not sent to $0$.
The connection between singular ideals of Steinberg algebras and singular ideals of contracted inverse semigroup algebras is as follows. Associated to any inverse semigroup $S$ is the \textit{tight groupoid} \cite{exel2008inversesemigroupscombinatorialcalgebras} $\mathcal{G}_{T}(S)$ which turns out to be ample. It is shown in \cite[Corollary 2.14]{Steinbergandnora2020simplicityinversesemigroupetale} that the Steinberg algebra $K\mathcal{G}_{T}(S)$ of the tight groupoid of the inverse semigroup with zero $S$ is isomorphic to the quotient of the contracted inverse semigroup algebra by the tight ideal, $\faktor{K_0S}{\TightIdeal}$. The singular ideal $\mathcal{J}_K$ of the Steinberg algebra is identified with the image of the singular ideal $\SingularIdeal$ under the quotient map, hence the coordinating nomenclature.

Let $S=S_{(G, \Gamma)}$ be the inverse semigroup associated to some self-similar groupoid $(G, \Gamma)$ as in Definition \ref{Def: Associated Inv SG}. The tight groupoid of $S$ may be recovered from the action of $G$ on $\PathSet$ as follows. Recall the action of $S$ on $\PathSet$ given in Section \ref{Sec: Preliminaries}. This action naturally extends to an action of $S$ on $\Gamma^\omega$ by partial homeomorphisms. We equip $\Gamma^\omega$ with a topology generated by the basis of open sets of the form $q\Gamma^\omega$ where $q\in \PathSet$. Let $\mathcal{G}(S, \Gamma^{\omega})$ be the groupoid of germs of this action, as constructed in \cite{exel2008inversesemigroupscombinatorialcalgebras}. Explicitly, the groupoid of germs is
\[\mathcal{G}(S, \Gamma^{\omega})=\faktor{\{(s,\mathfrak{p})\in S^\sharp\times \Gamma^\omega: s=agb^*, \mathfrak{p}\in b\Gamma^\omega\}}{\asymp},\]
where $(s,\mathfrak{p})\asymp(t,\mathfrak{q})$ if and only if $\mathfrak{p}=\mathfrak{q}$ and there exists some prefix $w$ of $\mathfrak{p}$ such that $s$ and $t$ agree on $w\Gamma^\omega$. The $\asymp$-equivalence class of $(s,\mathfrak{p})$ is denoted by $[s,\mathfrak{p}]$ and is also called the germ of $(s,\mathfrak{p})$. The relation $\asymp$ is known as the germ relation. The multiplication of $\mathcal{G}$ is given by $[s,t(\mathfrak{p})][t,\mathfrak{p}]=[st,\mathfrak{p}]$, the inverse given by $[s,\mathfrak{p}]^{-1}=[s^*,s(\mathfrak{p})]$, and the source map given by
$\source([s,\mathfrak{p}])=[s^*s,\mathfrak{p}]$.

Recall that the nonzero idempotents of $S$ each take the form $aa^*$ where $a\in \PathSet$. If $\mathfrak{p}\in a\Gamma^\omega$, it is easy to see that $(aa^*,\mathfrak{p})\asymp(bb^*,\mathfrak{p})$ for any prefix $b$ of $\mathfrak{p}$, and therefore we may identify the unit space of $\mathcal{G}(S, \Gamma^\omega)^0$ with $\Gamma^\omega$. We then equip $\mathcal{G}(S, \Gamma^\omega)$ with the topology generated by the sets of the form
\[[s,U]=\{[s,\mathfrak{p}], \mathfrak{p}\in U\subseteq \Gamma^\omega\}\]
where $U$ is an open subset of the domain of $s$. It is shown in \cite[Proposition 4.14]{exel2008inversesemigroupscombinatorialcalgebras} that this topology makes $\mathcal{G}(S,\Gamma^\omega)$ a topological groupoid. Furthermore, $\Gamma^\omega$ is locally compact and Hausdorff so it follows from \cite[Proposition 4.17]{exel2008inversesemigroupscombinatorialcalgebras} that $\mathcal{G}(S,\Gamma^\omega)$ is an \'etale groupoid. The groupoid of germs $\mathcal{G}(S,\Gamma^\omega)$ is in fact an ample groupoid, as $\Gamma^\omega$ is totally disconnected.

It turns out that the action of $S$ on $\Gamma^\omega$ can be identified with the standard action of $S$ on the space of tight characters of $S$. It follows that the groupoid of germs $\mathcal{G}(S,\Gamma^\omega)$ coincides with the tight groupoid $\mathcal{G}_T(S)$; see \cite{SteinbergandNora2023} for this identification in the self-similar group case.

Another important algebra associated to $\mathcal{G}$ is the reduced $C^*$-algebra, denoted by $C_r^*(\mathcal{G})$. It is shown in \cite{clark2022Steinbergalgebraapproachetale} that the reduced $C^*$-algebra $C_r^*(\mathcal{G})$ is the closure of the Steinberg algebra $\mathbb{C}(\mathcal{G})$ with respect to what is known as the reduced norm.

It is possible to view elements of $C_r^*(\mathcal{G})$ as maps from $\mathcal{G}$ to $\mathbb{C}$ via Renault's $j$-map \cite{renaultcstaralgebra}, see also \cite[Section 4.1]{clark2019simplicityalgebrasassociatednonhausdorff}, which we do in the following definition. The $C^*$-algebraic \textit{singular ideal} of $C_r^*(\mathcal{G})$ is defined as
\[\mathcal{J}=\{f\in C_r^*(\mathcal{G}): \text{supp}(j(f))\text{ has empty interior}\},\]
where $\text{supp}(j(f))$ is the subset of the domain of $j(f)$ which is not sent to $0$.

Given an ample groupoid $\mathcal{G}$, it is unknown in general whether the singular ideal of $\mathbb{C}\mathcal{G}$ is dense in the singular ideal of $C^*_r(\mathcal(G))$, or whether the simplicity of $\mathbb{C}\mathcal{G}$ and $C^*_r(\mathcal{G})$ coincide, however progress is made in \cite{brix2025hausdorffcoversnonhausdorffgroupoids} for ample groupoids satisfying a certain finiteness condition. In particular, simplicity of the two algebras is shown to coincide for the ample groupoids coming from contracting self-similar groups. Inspired by the results of \cite[Section 7.3]{brix2025hausdorffcoversnonhausdorffgroupoids}, we show that simplicity of the Steinberg algebra and the reduced $C^*$-algebra associated to a contracting self-similar groupoid coincide.

Let $(G, \Gamma)$ be a contracting self-similar groupoid with nucleus $N$. Let $\mathcal{G}=\mathcal{G}_{(G, \Gamma)}$ be the ample groupoid associated to $(G, \Gamma)$, that is, the tight groupoid of $S=S_{(G, \Gamma)}$. Borrowing notation from \cite{brix2025hausdorffcoversnonhausdorffgroupoids}, let $X$ be the unit space of $\mathcal{G}$, identified with $\Gamma^\omega$. Finally, let $\overline X$ be the closure of $X$ in $\mathcal{G}$ and let $[s,wX]$ denote $\{[s,\mathfrak{p}]\in \mathcal{G}: \mathfrak{p}\in wX\}$. 

For each $\mathfrak{p}\in X$, let $\mathcal{G}^{\mathfrak{p}}_{\mathfrak{p}}$ be the isotropy group of $\mathcal{G}$ with source and range $\mathfrak{p}$, and let $\overline{X}(\mathfrak{p}) = \overline{X}\cap \mathcal{G}^{\mathfrak{p}}_{\mathfrak{p}}$.
\begin{Proposition}\label{Prop: finite closure of X(w)}
    The set $\overline{X}(\mathfrak{p})$ is finite because it is contained in $\{[pnp^*, \mathfrak{p}]: n\in N, p\text{ is a prefix of }\mathfrak{p}\}\cap \mathcal{G}_{\mathfrak{p}}^{\mathfrak{p}}$.
\end{Proposition}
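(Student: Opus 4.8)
The plan is to establish the two halves of the statement separately: writing $T=\{[pnp^{*},\mathfrak{p}]:n\in N,\ p\text{ a prefix of }\mathfrak{p}\}\cap\mathcal{G}^{\mathfrak{p}}_{\mathfrak{p}}$, I will show first that $\overline{X}(\mathfrak{p})\subseteq T$ and then that $T$ is finite. For the containment, take $\gamma=[s,\mathfrak{p}]\in\overline{X}(\mathfrak{p})$ and write $s=agb^{*}$ in canonical form, so $\dom s=b\Gamma^{\omega}$. Since $\gamma\in\overline{X}$, the neighbourhood $[s,\dom s]$ of $\gamma$ meets the unit space, i.e.\ $[s,\mathfrak{r}]$ is a unit for some $\mathfrak{r}$; this means $s$ agrees with the idempotent $s^{*}s$ near $\mathfrak{r}$, hence $s$ restricts to the identity on some cylinder $w\Gamma^{\omega}\ni\mathfrak{r}$, with $w\Gamma^{\omega}\subseteq\dom s$ and so $b$ a prefix of $w$.

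A short computation with the canonical form pins down the shape of $s$: writing $w=bc$, the element $s$ acts on $w\Gamma^{\omega}$ as $bc\mathfrak{z}\mapsto a\,g(c)\,g|_{c}(\mathfrak{z})$, and comparing with the identity (using that $g$, being an isomorphism of trees, preserves the length of paths) forces $a\,g(c)=bc$, whence $|a|=|b|$ and therefore $a=b$. Thus $s=aga^{*}$ with $g\in\text{Iso}(\range(a),\range(a))$. Now use that $\gamma\in\mathcal{G}^{\mathfrak{p}}_{\mathfrak{p}}$: since $\mathfrak{p}\in\dom s=a\Gamma^{\omega}$, write $\mathfrak{p}=a\mathfrak{q}$; then $s(\mathfrak{p})=a\,g(\mathfrak{q})=\mathfrak{p}$ forces $g(\mathfrak{q})=\mathfrak{q}$. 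Finally invoke that $(G,\Gamma^{*})$ is contracting: choose a prefix $u$ of $\mathfrak{q}$ long enough that $g|_{u}\in N$, and set $p=au$, $n=g|_{u}\in N$. Because $g$ fixes $\mathfrak{q}$ we have $g(u)=u$, and a one-line check shows $s=aga^{*}$ and $(au)(g|_{u})(au)^{*}$ agree on the neighbourhood $au\Gamma^{\omega}$ of $\mathfrak{p}$; hence $\gamma=[pnp^{*},\mathfrak{p}]$ with $p$ a prefix of $\mathfrak{p}$ and $n\in N$, so $\gamma\in T$.

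For finiteness, for $i\ge 0$ let $p_{i}$ be the length-$i$ prefix of $\mathfrak{p}$, let $\mathfrak{q}_{i}$ be the tail of $\mathfrak{p}$ after $p_{i}$, and set $G_{i}=\{[p_{i}np_{i}^{*},\mathfrak{p}]:n\in N,\ n(\mathfrak{q}_{i})=\mathfrak{q}_{i}\}$. One checks that $[p_{i}np_{i}^{*},\mathfrak{p}]$ lies in $\mathcal{G}^{\mathfrak{p}}_{\mathfrak{p}}$ exactly when $n\in N$ fixes $\mathfrak{q}_{i}$, so $T=\bigcup_{i\ge 0}G_{i}$. The surjection $n\mapsto[p_{i}np_{i}^{*},\mathfrak{p}]$ from $\{n\in N:n(\mathfrak{q}_{i})=\mathfrak{q}_{i}\}$ onto $G_{i}$ shows $|G_{i}|\le|N|$, and the $G_{i}$ form an increasing chain: if $n\in N$ fixes $\mathfrak{q}_{i}=f\mathfrak{q}_{i+1}$ then $n(f)=f$, $n|_{f}$ fixes $\mathfrak{q}_{i+1}$, $n|_{f}\in N$ (the nucleus is closed under restriction, as witnessed by the Moore diagram construction of Section \ref{Sec: simplicity algorithm}), and a direct computation on $p_{i+1}\Gamma^{\omega}$ gives $[p_{i}np_{i}^{*},\mathfrak{p}]=[p_{i+1}(n|_{f})p_{i+1}^{*},\mathfrak{p}]\in G_{i+1}$. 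An increasing union of sets each of cardinality at most $|N|$ has cardinality at most $|N|$, so $T$, and hence $\overline{X}(\mathfrak{p})$, is finite.

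I expect the main obstacle to be the first step of the containment: deducing from $\gamma\in\overline{X}$ that the canonical form of $s$ is balanced, so that $\gamma$ is represented by something of the shape $pgp^{*}$. This genuinely uses membership in the \emph{closure} of the unit space and not merely in the isotropy group---there do exist isotropy germs $[agb^{*},\mathfrak{p}]$ with $|a|\ne|b|$, and it is exactly these that the closure condition rules out, since an unbalanced $s$ never restricts to the identity on any cylinder (a tree isomorphism cannot change path lengths). Once that normal form is available, the contracting hypothesis and the increasing-chain bookkeeping are routine.
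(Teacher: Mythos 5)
Your proof is correct and takes essentially the same route as the paper's: the containment uses the same two ingredients (the closure condition forcing a balanced representative $pnp^{*}$, and the contracting hypothesis pushing the middle factor into $N$), merely applied in the opposite order, and your finiteness argument via the increasing chain of sets $\{[p_{i}np_{i}^{*},\mathfrak{p}]:n\in N\}\cap\mathcal{G}^{\mathfrak{p}}_{\mathfrak{p}}$ each of size at most $|N|$ is the paper's argument almost verbatim. The only caveat is that your deduction that identity-on-a-cylinder forces $a\,g(c)=bc$ needs the subtrees along $\mathfrak{p}$ to branch (it fails for, say, a single loop at one vertex, where $e$ acts as the identity on $\Gamma^{\omega}$ yet $e\neq v$), but this is an implicit faithfulness assumption the paper's own proof also makes when it converts agreement of actions on $pqw\Gamma^{\omega}$ into an equation of canonical forms in $S$, and it holds under the standing type-(CF) hypotheses, so it is not a defect peculiar to your write-up.
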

\begin{proof}
Suppose that $[s, \mathfrak{q}]\in\overline{X}(\mathfrak{p})$. Then $[s, \mathfrak{q}]\in \mathcal{G}_{\mathfrak{p}}^{\mathfrak{p}}$ so $\mathfrak{q}=\mathfrak{p}$. Expressed in canonical form, let $s=agb^*$ with $g\in G$ and $a,b\in \PathSet$. By the definition of $\mathcal{G}$, the domain of the action of $s$ contains $\mathfrak{p}$ and so $b$ must be a prefix of $\mathfrak{p}$.

Now let $p_i$ be the length $i$ prefix of $\mathfrak{p}$. By the germ relation, we have
\[[agb^*, \mathfrak{p}]=[agb^*p_ip_i^*, \mathfrak{p}]\]
for all natural numbers $i$. Suppose that $|p_i|\geq |b|$ so that $p_i=bq$ for some $q\in \PathSet$. Then,
\[agb^*p_ip_i^* = agqp_i^* = ag(q)g|_qp_i^*.\]
Because $G$ is contracting, we may choose a natural number $i$ large enough such that $g|_{q}\in N$. So 
\[\overline{X}(\mathfrak{p})\subseteq \{[anp^*, \mathfrak{p}]: n\in N, a\in \PathSet, p\text{ is a prefix of }\mathfrak{p}\}.\]

Suppose that $[anp^*, \mathfrak{p}] \in \overline{X}(\mathfrak{p})$. Then, every open neighbourhood of $[anp^*, \mathfrak{p}]$ intersects $X$. In particular, for every prefix $pq$ of $\mathfrak{p}$, the open neighbourhood $[anp^*, pqX]$ intersect $X$. It follows that there exists some $[anp^*, pq\mathfrak{w}]\in [anp^*, pqX]$ such that $[anp^*, pq\mathfrak{w}]=[\id_{\source(p)}, pq\mathfrak{w}]$. But then there must be a prefix $pqw$ of $pq\mathfrak{w}$ such that the action of $\id_{\source(p)}$ and $anp^*$ agree on $pqw\Gamma^\omega$. It follows that
\begin{align*}
     pqw(pqw)^*&= anp^*pqw (pqw)^* \\
        &= anqw (pqw)^* \\
        &= an(qw)n|_{qw} (pqw)^*. \\
\end{align*}
As $pqw(pqw)^*$ and $an(qw)n|_{qw} (pqw)^*$ are each in canonical form, it must be the case that $n|_{qw} = \id_{\range(w)}$. Because the action of $n$ is length preserving, we have that $n(qw)=qw$ and thus $a = p$, establishing that $[anp^*, \mathfrak{p}]\in \{[pnp^*, \mathfrak{p}]: n\in N, p\text{ is a prefix of }\mathfrak{p}\}$. The above holds for every prefix $pq$ of $\mathfrak{p}$, so $anp^*(\mathfrak{p})=\mathfrak{p}$, establishing that $[anp^*, \mathfrak{p}]\in\mathcal{G}_{\mathfrak{p}}^{\mathfrak{p}}$. We conclude that
\[\overline{X}(\mathfrak{p})\subseteq \{[pnp^*, \mathfrak{p}]: n\in N, p\text{ is a prefix of }\mathfrak{p}\}\cap\mathcal{G}_{\mathfrak{p}}^{\mathfrak{p}}.\]

Finally, we show that $|\overline{X}(\mathfrak{p})|$ is finite. Suppose that $[p_inp_i^*,\mathfrak{p}]\in\mathcal{G}_{\mathfrak{p}}^{\mathfrak{p}}$ where $n\in N$ and $p_i$ is the length $i$ prefix of $\mathfrak{p}\in X$. Then $p_inp_i^*(\mathfrak{p})=\mathfrak{p}$. Let $p_{i+1}=p_ie$ for $e\in\EdgeSet$. Because $p_inp_i^*(\mathfrak{p})=\mathfrak{p}$, we have
\[[p_inp_i^*,\mathfrak{p}] = [p_inep_{i+1}^*, \mathfrak{p}] = [p_ien|_{e}p_{i+1}, \mathfrak{p}] = [p_{i+1}n|_{e}p_{i+1}, \mathfrak{p}].\]
Since $n|_e\in N$, it follows that for each natural number $i$,
\[\{[p_inp_i^*, \mathfrak{p}]: n\in N\}\cap \mathcal{G}_{\mathfrak{p}}^{\mathfrak{p}}\subseteq \{[p_{i+1}np_{i+1}^*, \mathfrak{p}]: n\in N\}\cap \mathcal{G}_{\mathfrak{p}}^{\mathfrak{p}}.\]
Also, for each natural number $i$ we have
\[|\{[p_inp_i^*, \mathfrak{p}]: n\in N\}|\leq |N|.\]
We conclude that because
\[\overline{X}(\mathfrak{p})\subseteq \{[pnp^*, \mathfrak{p}]: n\in N, p\text{ is a prefix of }\mathfrak{p}\}\cap \mathcal{G}_{\mathfrak{p}}^{\mathfrak{p}} = \bigcup_{i\in \mathbb{N}}\{[p_inp_i^*, \mathfrak{p}]: n\in N\}\cap \mathcal{G}_{\mathfrak{p}}^{\mathfrak{p}},\]
we must have that $|\overline{X}(\mathfrak{p})|\leq |N|<\infty$.
\end{proof}
The next corollary immediately follows from Proposition \ref{Prop: finite closure of X(w)} in tandem with \cite[Corollaries 4.6, 4.8]{brix2025hausdorffcoversnonhausdorffgroupoids}.
\begin{Corollary}
    The singular ideal of the complex Steinberg algebra $\mathbb{C}\mathcal{G}$ is $\{0\}$ if and only if the singular ideal of the reduced $C^*$-algebra $C^*_r(\mathcal{G})$ is $\{0\}$.
\end{Corollary}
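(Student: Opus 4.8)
The plan is to deduce the corollary directly from \cite[Corollaries 4.6 and 4.8]{brix2025hausdorffcoversnonhausdorffgroupoids}, once we observe that Proposition \ref{Prop: finite closure of X(w)} verifies the hypothesis those results require. The framework of \cite{brix2025hausdorffcoversnonhausdorffgroupoids} isolates a finiteness condition on an ample groupoid $\mathcal{G}$ with unit space $X$: that for every $\mathfrak{p}\in X$, the fibre $\overline{X}(\mathfrak{p}) = \overline{X}\cap\mathcal{G}^{\mathfrak{p}}_{\mathfrak{p}}$ of the closure of the unit space inside $\mathcal{G}$ is finite. Under this condition, their results relate the essential ideal $\mathcal{J}_{\mathbb{C}}$ of the Steinberg algebra $\mathbb{C}\mathcal{G}$ to the essential ideal $\mathcal{J}$ of $C^*_r(\mathcal{G})$: roughly, one of the cited corollaries identifies $\mathcal{J}_{\mathbb{C}}$ with the trace $\mathcal{J}\cap\mathbb{C}\mathcal{G}$ of the $C^*$-algebraic essential ideal, and the other shows that $\mathcal{J}_{\mathbb{C}}$ is dense in $\mathcal{J}$.

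First I would confirm that the ample groupoid $\mathcal{G}=\mathcal{G}_{(G,\PathSet)}$ meets these hypotheses. That it is an ample groupoid with totally disconnected, locally compact Hausdorff unit space $X=\Gamma^\omega$ is established in the discussion preceding Proposition \ref{Prop: finite closure of X(w)}; and the required finiteness of $\overline{X}(\mathfrak{p})$ for every $\mathfrak{p}\in X$, with the explicit bound $|\overline{X}(\mathfrak{p})|\leq|N|$ coming from contractingness, is exactly the content of Proposition \ref{Prop: finite closure of X(w)}. With the hypotheses in place, the equivalence is formal: if $\mathcal{J}_{\mathbb{C}}=\{0\}$ then its closure $\mathcal{J}$ is $\{0\}$ by the density statement; conversely, if $\mathcal{J}=\{0\}$ then $\mathcal{J}_{\mathbb{C}}=\mathcal{J}\cap\mathbb{C}\mathcal{G}=\{0\}$. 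Note that only the first direction genuinely uses the finiteness condition, since the inclusion $\mathcal{J}\cap\mathbb{C}\mathcal{G}\subseteq\mathcal{J}_{\mathbb{C}}$ is automatic from the fact that Renault's $j$-map restricts to the identity on $\mathbb{C}\mathcal{G}$.

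Since Proposition \ref{Prop: finite closure of X(w)} has already done the substantive work, I do not expect a genuine obstacle here; the only point requiring care is notational, namely checking that the object $\overline{X}(\mathfrak{p})$ as defined just before Proposition \ref{Prop: finite closure of X(w)} coincides with the fibre of $\overline{X}$ over $\mathfrak{p}$ whose finiteness is hypothesized in \cite{brix2025hausdorffcoversnonhausdorffgroupoids}, and that our $\mathcal{G}$ lies in the class of groupoids to which Corollaries 4.6 and 4.8 apply. Once that dictionary is confirmed, the corollary follows with no further computation, which is why the preceding remark calls it immediate.
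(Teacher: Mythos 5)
Your proposal matches the paper's argument exactly: the paper derives this corollary by combining Proposition \ref{Prop: finite closure of X(w)} (which supplies the finiteness of $\overline{X}(\mathfrak{p})$) with Corollaries 4.6 and 4.8 of \cite{brix2025hausdorffcoversnonhausdorffgroupoids}, precisely as you describe. Your additional remarks on which direction uses the finiteness condition are a correct elaboration of what the paper leaves implicit in calling the corollary immediate.
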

Proposition \ref{Prop: finite closure of X(w)} along with \cite[Corollary 4.10]{brix2025hausdorffcoversnonhausdorffgroupoids} and \cite[Corollary 4.12]{clark2019simplicityalgebrasassociatednonhausdorff} also gives us the following.
\begin{Corollary}\label{Cor: cstar simple iff Steinberg simple}
    The complex Steinberg algebra $\mathbb{C}\mathcal{G}$ is simple if and only if the reduced $C^*$-algebra $C^*_r(\mathcal{G})$ is simple.
\end{Corollary}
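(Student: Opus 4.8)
The plan is to deduce the equivalence from the standard dictionaries for simplicity of Steinberg algebras and of reduced groupoid $C^*$-algebras, once one observes that the only ingredient sensitive to the choice of algebra is the vanishing of the respective essential ideal, and that Proposition \ref{Prop: finite closure of X(w)} has placed precisely this under control.

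First I would recall the relevant criteria for the ample groupoid $\mathcal{G}=\mathcal{G}_{(G,\PathSet)}$. Its unit space $X$, identified with $\Gamma^\omega$, is a disjoint union of the compact sets $v\Gamma^\omega$, hence locally compact, Hausdorff, and Baire; in particular effectiveness and topological principality coincide for $\mathcal{G}$ by the usual Baire-category argument. With this noted, $\mathbb{C}\mathcal{G}$ is simple precisely when $\mathcal{G}$ is minimal and effective and the algebraic essential ideal $\mathcal{J}_{\mathbb{C}}$ is $\{0\}$ (the algebraic side, \cite[Corollary 4.12]{clark2019simplicityalgebrasassociatednonhausdorff}), while $C^*_r(\mathcal{G})$ is simple precisely when $\mathcal{G}$ is minimal and effective and the analytic essential ideal $\mathcal{J}$ is $\{0\}$. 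The two groupoid-theoretic side conditions, minimality and effectiveness, are intrinsic to $\mathcal{G}$ and identical in the two statements, so if either fails then neither algebra is simple and the asserted equivalence holds vacuously, whereas if both hold then simplicity of $\mathbb{C}\mathcal{G}$ reduces to $\mathcal{J}_{\mathbb{C}}=\{0\}$ and simplicity of $C^*_r(\mathcal{G})$ reduces to $\mathcal{J}=\{0\}$.

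It therefore remains only to see that $\mathcal{J}_{\mathbb{C}}=\{0\}$ if and only if $\mathcal{J}=\{0\}$, which is exactly the preceding corollary. That corollary is driven by Proposition \ref{Prop: finite closure of X(w)}: the finiteness of $\overline{X}(\mathfrak{p})$ at each $\mathfrak{p}\in X$ is the precise hypothesis under which \cite[Corollaries 4.6, 4.8]{brix2025hausdorffcoversnonhausdorffgroupoids} force the algebraic and analytic essential ideals to be trivial together. Assembling this with \cite[Corollary 4.10]{brix2025hausdorffcoversnonhausdorffgroupoids}, which packages the coincidence of simplicity under that finiteness condition, and with \cite[Corollary 4.12]{clark2019simplicityalgebrasassociatednonhausdorff} on the algebraic side, completes the argument.

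The genuinely substantive input is Proposition \ref{Prop: finite closure of X(w)}, and that is exactly where the contracting hypothesis is unavoidable: every sufficiently deep restriction of a groupoid element lands in the finite nucleus $N$, so the isotropy of $\overline{X}$ over any infinite path is pinned down by $N$ and has size at most $|N|$. Given that, the rest is assembly; I expect the only point needing a little care to be the verification that effectiveness and topological principality really do coincide for $\mathcal{G}$ — which is why the Baire property of $\Gamma^\omega$ is worth recording — so that the groupoid-theoretic conditions in the two simplicity criteria genuinely match.
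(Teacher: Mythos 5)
Your proposal is correct and follows essentially the same route as the paper, whose proof is simply the citation of Proposition \ref{Prop: finite closure of X(w)} together with \cite[Corollary 4.10]{brix2025hausdorffcoversnonhausdorffgroupoids} and \cite[Corollary 4.12]{clark2019simplicityalgebrasassociatednonhausdorff}. You have merely unpacked how those results fit together (matching the minimality/effectiveness conditions and reducing to the vanishing of the two essential ideals, which the preceding corollary handles), which is a faithful expansion of the paper's one-line argument.
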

\section{Examples}\label{Sec: Examples}
We begin with a self-similar groupoid which first appeared in Example 3.10 of \cite{SSGroupoidsWhitaker} where simplicity of the associated algebras was not considered. As previously noted, our conventions differ from those of Laca \textit{et al.} and what appears here is adjusted to align with our setting.

The self-similar groupoid $G$ of Example \ref{Ex: example 3.10 of whittaker} has one orbit, so it is natural to apply the techniques of Section \ref{Sec: Collapsing Groupoids}. Collapsing the self-similar groupoid $(G, \Sigma)$ results in a self-similar group previously considered in the literature.
\begin{Example}\label{Ex: example 3.10 of whittaker}
    Let $G$ be the groupoid acting on the paths of the graph $\Sigma$ of Figure \ref{fig: Sigma graph from Whittaker},
    \begin{figure}[ht!]
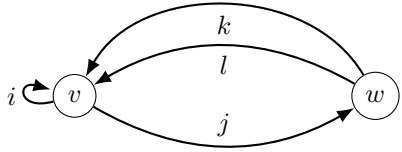

        \centering
        \tikz [>=Latex]{
        \graph [edge quotes={auto}]{ 
            "$v$" [at={(0,2)}, shape=circle, draw] ->[bend right,"$j$", thick]  "$w$" [at={(3,2)}, shape=circle, draw];
            "$w$" ->[bend right=60, "$k$", thick] "$v$";
            "$w$" ->[bend right, "$l$", thick] "$v$";
            "$v$" ->[loop left, "$i$", thick] "$v$";
        };}
        \caption{The graph $\Sigma$ of Example \ref{Ex: example 3.10 of whittaker}.}
        \label{fig: Sigma graph from Whittaker}
    \end{figure}
    generated by $a$ and $b$ whose actions are given by,
    \begin{center}
        \begin{tabular}{ l l }
         $a(ip)=lp,$ & $b(kp)=ip,$ \\
         $a(jq)=kb(q),$ & $b(lp)=ja(p)$, 
        \end{tabular}
\end{center}
for $p\in v\Sigma^*$ and $q\in w\Sigma^*$. It is easy to see that $G$ has a single orbit. Let $R=\{v\}$ and $F=\{\id_v, \id_u, a, a^{-1}\}$, and let $\alpha: G\rightarrow \faktor{G}{\psi_F}$ and $\beta:\Sigma^*\rightarrow \faktor{\Sigma^*}{\sim_F}$ be the groupoid and graph morphisms as specified in Section \ref{Sec: Collapsing Groupoids}. Let $\alpha(G)=L$ and the graph induced by $\beta(\Sigma^0\cup\Sigma^1)$ be $\Lambda$. Because $G$ has a single orbit, $L$ is a group and $\Lambda$ is the bouquet of two loops which we shall denote $\{e,f\}$. In fact, $L$ is a self-similar group by Proposition \ref{Prop: L,Lambda is self-similar}. The graph homomorphism $\beta: \Sigma\rightarrow \Lambda$ is specified by $\beta^{-1}(e)=\{i,l\}$ and $\beta^{-1}(f)=\{j,k\}$.

The set $v\Sigma^*$ acts as a set of representatives for the $\sim_F$ equivalence classes of $\Sigma^*$. We may then specify an identification $\gamma$ between $\faktor{\Sigma^*}{\sim_F}$ and $\lambda^*$ by specifying the image of $v\Sigma^*$. Let $\gamma$ satisfy $\gamma(\beta(pc))=\gamma(\beta(p))\beta(c)$ for all $pc\in v\Sigma^*$ with $c\in \EdgeSet$. The groupoid $G$ is generated by $\{a, a^{-1},b, b^{-1}\}$, and so $L$ is generated by $\{c=\alpha(a), c^{-1}=\alpha(a^{-1}), d=\alpha(b), d^{-1}=\alpha(b^{-1})\}$. The actions of $c$ and $d$ are given by,
    \begin{center}
        \begin{tabular}{ l l }
        $c(ep)=ep,$ & $d(ep)=fc(p),$\\ 
        $c(fp)=fd(p),$ & $d(fp)=ep,$
        \end{tabular}
    \end{center}
        for all $p\in \Lambda^*$. The set $\{c,d,c^{-1}, d^{-1}\}$ happens to be the standard generating set for a self-similar group known as the Basilica group \cite{basilicaSourcePaper} with its usual action on the bouquet of two loops. It is well known that the nucleus of $L$ is $\{\id, c, c^{-1}, d, d^{-1}, cd^{-1}, dc^{-1}\}$. The Moore diagram and the graph $\HH$ of the nucleus of $L$ are given in Figure \ref{Fig: moore diagram and H-graph of L.}.
    \begin{figure}[ht!]
        \centering
        \begin{tikzpicture}[->,>=stealth',shorten >=1pt,auto,node distance=4cm,
                thick,main node/.style={circle,draw,font=\Large\bfseries,red}]
        \node[main node] (id) {$\id$};
        \node (lghost) [left=4 of id] {};
        \node (rghost) [right=4 of id] {};
        \node (upghost) [above=4 of id] {};
        \node[main node] (c) [below=1.5 of lghost] {$c$};
        \node[main node] (cinv) [below=1.5 of rghost] {$c^{-1}$};
        \node[main node] (d) [above=1.5 of lghost] {$d$};
        \node[main node] (dinv) [above=1.5 of rghost] {$d^{-1}$};
        \node[main node] (cinvd) [right=1.5 of upghost] {$c^{-1}d$};
        \node[main node] (dinvc) [left=1.5 of upghost] {$d^{-1}c$};
        
        \path
            (c) edge [red] node [above] {$(e,e)$} (id)
                edge [bend left=15,red] node  {$(f,f)$} (d)
            (d) edge [bend left=15] node  {$(e,f)$} (c)
                edge node [below]  {$(f,e)$} (id)
            (id) edge [loop above,red] node  {$(e,e)$} (id)
                edge [loop below,red] node  {$(f,f)$} (id)
            (cinv) edge [red] node [above] {$(e,e)$} (id)
                edge [bend left=15,red] node  {$(f,f)$} (dinv)
            (dinv) edge [bend left=15] node  {$(f,e)$} (cinv)
                edge node [below] {$(e,f)$} (id)
            (dinvc) edge [bend left=15] node  {$(f,e)$} (cinvd)
                edge node [right] {$(e,f)$} (id)
            (cinvd) edge [bend left=15] node  {$(e,f)$} (dinvc)
                edge node [left] {$(f,e)$} (id);
            
        \end{tikzpicture}
        \caption{The Moore diagram (in black and red) and $\HH$(in red) of the nucleus of $L$.}
        \label{Fig: moore diagram and H-graph of L.}
    \end{figure}
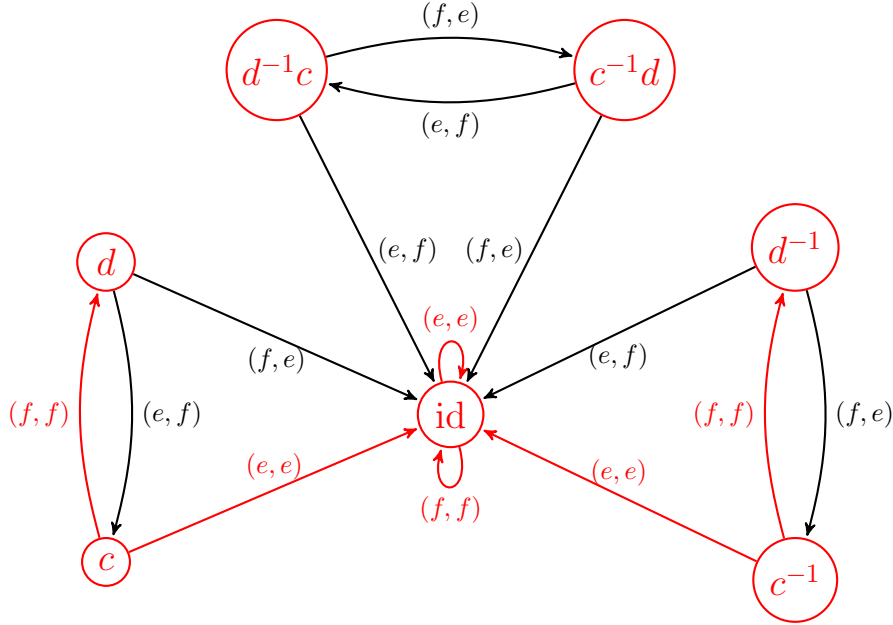
    Because there are no red cycles in Figure \ref{Fig: moore diagram and H-graph of L.}, except for the loops at the identity, there are no nontrivial recurring subgroups. Then the recurring subgroup condition of Corollary \ref{Cor: algebra quotient tight simple iff ker of recurring subgroup maps is 0} is trivially met and so $\faktor{KS_{(L, \Lambda)}}{{\TightIdeal}}$ is simple over every field $K$. It follows from Corollary \ref{Cor: groupoid algebra simple iff collapsed groupoid algebra simple} that $\faktor{KS_{(G, \Sigma)}}{{\TightIdeal}}$ is simple over every field $K$. We remark that the simplicity of the Steinberg algebra associated to the Basilica group was previously determined in \cite[Theorem 6.1]{SteinbergandNora2023}, and it is mentioned there that the ample groupoid associated to the Basilica group is Hausdorff.

    It is worth observing that $L$ being the Basilica group implies that the isotropy groups of $G$ are isomorphic as groups to the Basilica group. Thus it may surprise the reader that the Moore diagram of the nucleus of $G$, depicted in Figure 4 of \cite{SSGroupoidsWhitaker}, does not contain Figure \ref{Fig: moore diagram and H-graph of L.} as a subdiagram. We reconcile this oddity by recalling that the nucleus of a self-similar group or groupoid is a property of the action rather than a property of the group or groupoid itself, and though the action of $L$ on $\Lambda^*$ is built to sufficiently capture the essence of the action of $G$ on $\Sigma^*$, the structural differences of $\Lambda$ and $\Sigma$ are likely to blame for the apparent discrepancy between the nuclei of $L$ and $G$.
\end{Example}
Next we consider an example which illustrates the usefulness of Proposition \ref{Cor: groupoid algebra simple iff collapsed groupoid algebra simple} in greater generality than Example \ref{Ex: example 3.10 of whittaker}.
\begin{Example}\label{Ex: collapsing a groupoid onto a group}
    Let $H$ be any self-similar group acting on words over $\{a,b\}$. Identify $H$ with its action, a subset of $\textbf{Iso}(\{a,b\}^*)$. We define a self-similar groupoid $G$ acting on the graph $\Gamma$ shown in Figure \ref{Fig: lift of self-similar group graph} in the following way.

    Let $\beta: \Gamma^1:\rightarrow \{a,b\}$ be given by $\beta^{-1}(a)=\{e,g,i\}$ and $\beta^{-1}(b)=\{f,h,j\}$. Extend $\beta$ to $\PathSet$ by recursively defining $\beta(ep)=\beta(e)\beta(p)$ for $ep\in \PathSet$. For each $v\in \VertSet$, let the restriction of $\beta$ to $v\PathSet$ be denoted by $\beta_v$. Let $G$ be the subgroupoid of $\textbf{Iso}(\PathSet)$ consisting maps of the form $\beta_u^{-1}h\beta_v$, where $u,v\in \VertSet$ and $h\in H$.

    \begin{figure}[ht!]
        \centering
        \begin{tikzpicture}[->,>=stealth',shorten >=1pt,auto,node distance=4cm,
                thick,main node/.style={circle,draw,font=\Large\bfseries}]
        \node[main node] (x) {$x$};
        \node (ghost) [right=1.5 of x] {};
        \node[main node] (y) [right=1.5 of ghost] {$y$};
        \node[main node] (z) [above=2.85 of ghost] {$z$};
        
        \path
            (x) edge [bend left=15] node  {$e$} (z)
                edge [bend left=15] node  {$f$} (y)
            (z) edge [bend left=15] node  {$k$} (y)
                edge [bend left=15] node  {$l$} (x)
            (y) edge [bend left=15] node  {$i$} (x)
                edge [bend left=15] node  {$j$} (z);
        \end{tikzpicture}
        \caption{The graph $\Gamma$ of Example \ref{Ex: collapsing a groupoid onto a group}.}
        \label{Fig: lift of self-similar group graph}
    \end{figure}
It is easy to verify that $G$ is a self-similar groupoid. Indeed, let $\beta_u^{-1}h\beta_v\in G$ and $c\in\EdgeSet$ and suppose that $\beta_u^{-1}h\beta_v(c)=d$. Then,
\begin{align*}
    \beta_u^{-1}h\beta_v(cp) &= \beta_u^{-1}h(\beta_v(c)\beta_{\range(c)}(p))\\
                        &= \beta_u^{-1}(h(\beta_v(c)\beta_{\range(c)}(p))) \\
                        &= d\beta_{\range(d)}^{-1}(h|_{\beta_v(c)}(\beta_{\range(c)}(p))) \\
                        &= d\beta_{\range(d)}^{-1}h|_{\beta_v(c)}\beta_{\range(c)}(p)
\end{align*}
for all $p\in \range(c)\PathSet$. Let us analyze the singular and tight ideals of $KS_{(G, \Gamma)}$ by applying the techniques of Section \ref{Sec: Collapsing Groupoids}.

The subset $F=\{\id_x, \id_y, \id_v, \beta_{x}^{-1}\id\beta_{y}, \beta_{y}^{-1}\id\beta_{x}, \beta_{x}^{-1}\id\beta_{z}, \beta_{z}^{-1}\id\beta_{x}\}$ of $G$ establishes that $G$ has a single orbit. Thus, collapsing $(G, \Gamma)$ in the manner of Section \ref{Sec: Collapsing Groupoids} results in a self-similar group. It is clear from the construction of $G$ that using the subset $F$ to specify the collapsing maps, we get exactly $(H,\{a,b\})$ as the collapsed groupoid (where $\{a,b\}$ is identified with a bouquet of loops). Applying Corollary \ref{Cor: groupoid algebra simple iff collapsed groupoid algebra simple}, the algebra $\faktor{KS_{(G, \Gamma)}}{\TightIdeal}$ is simple if and only if the algebra $\faktor{KS_{(H,\{a,b\})}}{\TightIdeal}$ is simple. 
\end{Example}
We end with an example whose construction is inspired by the multispinal groups of \cite{SteinbergandNora2023}. In particular, this groupoid contains both the Grigorchuk group and Grigorchuk-Erschler group as self-similar subgroups.
\begin{Example}\label{Ex: Grigorchuk meets Grigorchuk-Erschler MS}
Let $M$ be the self-similar groupoid acting on the graph $\Gamma$ in Figure \ref{fig: Multispinal groupoid graph} generated by $\{a_x,b_x,c_x,d_x,a_y,b_y,c_y,d_y\}$ whose actions and restrictions are given by:
\begin{center}
\begin{tabular}{ l l | l l }
 $a_x(ip)=jp,$ & $a_x(jp)=ip,$ & $a_x(eq)=fq,$ & $a_x(fq)=eq$,\\ 
 $b_x(ip)=ia_x(p),$ & $b_x(jp)=jc_x(p),$ & $b_x(eq)=eq,$ & $b_x(fq)=fq$,\\
 $c_x(ip)=ia_x(p),$ & $c_x(jp)=jd_x(p),$ & $c_x(eq)=eq,$ & $c_x(fq)=fq$,\\ 
 $d_x(ip)=ip,$ & $d_x(jp)=jb_x(p),$ & $d_x(eq)=eq,$ & $d_x(fq)=fq$,\\  
\hline
 $a_y(mq)=nq,$ & $a_y(nq)=mq,$ & $a_y(gp)=hp$ & $a_y(hp)=gp$,\\ 
 $b_y(mq)=ma_y(q),$ & $b_y(nq)=nb_y(q),$ & $b_y(gp)=gp$ & $b_y(hp)=hp$,\\
 $c_y(mq)=ma_x(q),$ & $c_y(nq)=nd_y(q),$ & $c_y(gp)=gp$ & $c_y(hp)=hp$,\\ 
 $d_y(mq)=mq,$ & $d_y(nq)=nb_y(q),$ & $d_y(gp)=gp$ & $d_y(hp)=hp$, 
\end{tabular}
\end{center}
for each $p\in x\Gamma^*$ and $q\in y\Gamma^*$.

    \begin{figure}[ht!]
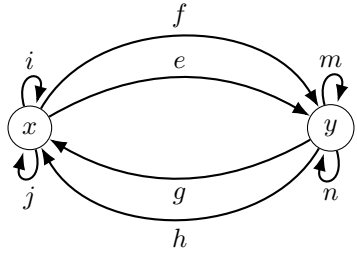

        \centering
        \tikz [>=Latex]{
        \graph [edge quotes={auto}]{ 
            "$x$" [at={(0,2)}, shape=circle, draw] ->[bend left,"$e$", thick]  "$y$" [at={(3,2)}, shape=circle, draw];
            "$x$" ->[bend left=60, "$f$", thick] "$y$";
            "$y$" ->[bend left, "$g$", thick] "$x$";
            "$y$" ->[bend left=60, "$h$", thick] "$x$";
            "$x$" ->[loop above, "$i$", thick] "$x$";
            "$x$" ->[loop below, "$j$", thick] "$x$";
            "$y$" ->[loop above, "$m$", thick] "$y$";
            "$y$" ->[loop below, "$n$", thick] "$y$";
        };}
        \caption{The graph $\Gamma$ of Example \ref{Ex: Grigorchuk meets Grigorchuk-Erschler MS}.}
        \label{fig: Multispinal groupoid graph}
    \end{figure}
    The keen eye will recognize two things about these generators: first, the equations in the upper left quadrant define the Grigorchuk group acting on the subgraph containing $x,i,j$; and second, the equations in the lower left quadrant define the Grigorchuk-Erschler group on the subgraph containing $y,m,n$. The equations in the right two quadrants 'weave' together the self-similarity of these two groups in a fairly simple way, by letting each restriction from one group into the other go to the respective identity.

    The self-similar groupoid $M$ is contracting and has nucleus,
    \[N=\{\id_x, a_x,b_x,c_x,d_x, \id_y, a_y,b_y,c_y,d_y\}.\] 
    Figure \ref{Fig: MS H graph} displays the subgraph $\HH$ of the Moore diagram of the nucleus, as described in Section \ref{Sec: simplicity algorithm}.
    \begin{figure}[ht!]
        \centering
        \scalebox{0.8}{
        \begin{tikzpicture}[->,>=stealth',shorten >=1pt,auto,node distance=4cm,
                thick,main node/.style={circle,draw,font=\Large\bfseries}]
        \node[main node] (iy)  {$\id_y$};
        \node[main node] (dy) [right=4 of iy] {$d_y$};
        \node[main node] (cy) [above left=2.5 of dy] {$c_y$};
        \node[main node] (by) [above=3 of dy] {$b_y$};
        
        \node[main node] (ix) [above =7 of cy] {$\id_x$};
        \node[main node] (dx) [left=4 of ix] {$d_x$};
        \node[main node] (bx) [below=3 of dx] {$b_x$};
        \node[main node] (cx) [above right=2.5 of bx] {$c_x$};

        \node[main node] (ax) [below=3 of ix] {$a_x$};
        \node[main node] (ay) [above=3 of iy] {$a_y$};

        \path
            (bx) edge node {$j$} (cx)
            (cx) edge node {$j$} (dx)
            (dx) edge node {$j$} (bx)
                edge node {$i$} (ix)
            (ix) edge [loop right] node {$i,j$} (ix)
            
            (by) edge [loop below] node {$n$} (by)
            (cy) edge [bend left] node [below] {$n$} (dy)
            (dy) edge [bend left] node [above] {$n$} (cy)
                edge node {$m$} (iy)
            (iy) edge [loop left] node {$m,n$} (iy)

            (ix) edge [bend right=7] node [left] {$e,f$} (iy)
            (bx) edge [bend right] node [right] {$e,f$} (iy)
            (cx) edge [bend right] node [left] {$e,f$} (iy)
            (dx) edge [bend right=45] node [left] {$e,f$} (iy)

            (iy) edge [bend right=7] node [right] {$g,h$} (ix)
            (by) edge [bend right=15] node [left] {$g,h$} (ix)
            (cy) edge [bend right=23] node [right] {$g,h$} (ix)
            (dy) edge [bend right=35] node [right] {$g,h$} (ix)
            ;
        \end{tikzpicture}
        }
        \caption{The graph $\HH$ associated to $M$.}
        \label{Fig: MS H graph}
    \end{figure}

    The graph $\Delta$ of Theorem \ref{Thm: Ess neq tight iff H_p and minimal vertices of synchronized verts of delta} is shown in Figure \ref{fig: MS graph Delta}, where we employ the unconventional notation $\{\id,b,c,d\}_x$ to denote $\{\id_x,b_x,c_x,d_x\}$ in the interest of readability.
    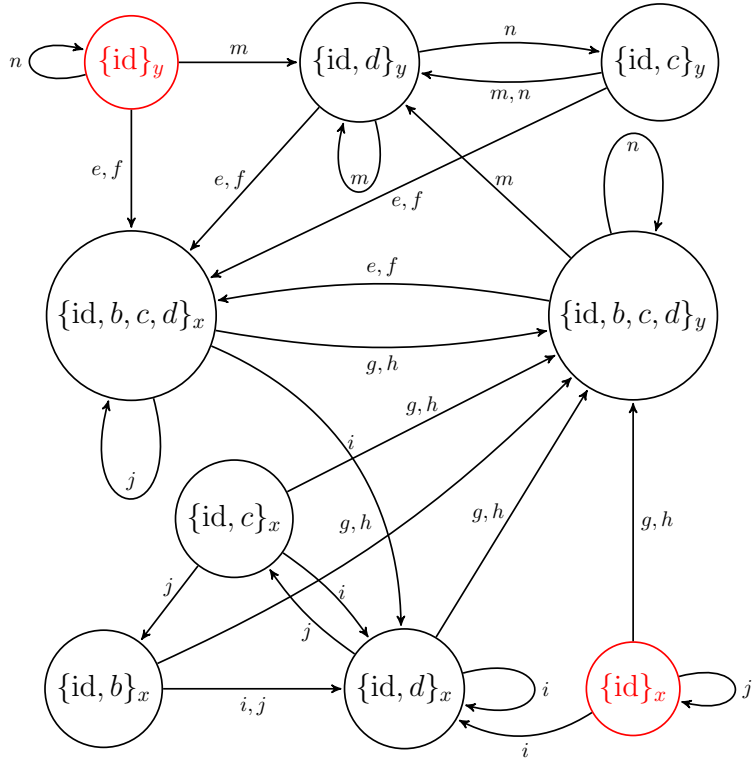
\begin{figure}[ht!]
        \centering
        \scalebox{0.8}{
        \begin{tikzpicture}[->,>=stealth',shorten >=1pt,auto,node distance=4cm,
                thick,main node/.style={circle,draw,font=\Large\bfseries}]
        \node[main node] (ibcdx) {$\{\id,b, c, d\}_x$};
        \node[main node] (ibcdy) [right=5.5 of ibcdx] {$\{\id,b, c, d\}_y$};
        \node[main node, red] (ix) [below=4 of ibcdy] {$\{\id\}_x$};
        \node[main node] (idx) [left=2 of ix] {$\{\id,d\}_x$};
        \node[main node] (ibx) [left=3 of idx] {$\{\id,b\}_x$};
        \node[main node] (icx) [above left=2 of idx] {$\{\id,c\}_x$};

        \node[main node, red] (iy) [above=2 of ibcdx] {$\{\id\}_y$};
        \node[main node] (idy) [right=2 of iy] {$\{\id,d\}_y$};
        \node[main node] (icy) [right=3 of idy] {$\{\id,c\}_y$};

        \path
            (ix) edge [loop right] node [right] {$j$} (ix)
                edge [bend left]  node {$i$} (idx)
            (idx) edge [loop right] node [right] {$i$} (idx)
                edge [bend left=10] node [below] {$j$} (icx)
            (icx) edge [bend left=10] node [right] {$i$} (idx)
                edge node [above] {$j$} (ibx)
            (ibx) edge node [below] {$i,j$} (idx)
            
            (iy) edge node [left] {$e,f$} (ibcdx)
            (idy) edge node [left] {$e,f$} (ibcdx)
            (icy) edge node [below] {$e,f$} (ibcdx)

            (ix) edge  node [right] {$g,h$} (ibcdy)
            (idx) edge  node [left] {$g,h$} (ibcdy)
            (icx) edge  node [above] {$g,h$} (ibcdy)
            (ibx) edge [bend right = 10] node  {$g,h$} (ibcdy)

            (ibcdx) edge [bend right=10] node [below] {$g,h$} (ibcdy)
                  edge [loop below] node [above] {$j$} (ibcdx)
                  edge [bend left=33] node [above] {$i$} (idx)
            (ibcdy) edge [bend right=10] node [above] {$e,f$} (ibcdx)
                   edge [loop above] node [below] {$n$} (ibcdy)
                   edge node [right] {$m$} (idy)
            
            (iy) edge [loop left] node {$n$} (iy)
                edge node {$m$} (idy)
            (idy) edge [bend left=10] node {$n$} (icy)
                edge [loop below] node [above] {$m$} (idy)
            (icy) edge [bend left=10] node {$m,n$} (idy);
        \end{tikzpicture}
        }
        \caption{The graph $\Delta$ associated to $M$ with minimal vertices shown in black.}
        \label{fig: MS graph Delta}
    \end{figure}
    We refer the reader to Figures 1 and 2 of \cite{gardella2025simplicitycalgebrascontractingselfsimilar} for comparison with the graph $\Delta$ associated to the Grigorchuk group and the Grigorchuk-Erschler group, respectively. 
    
    The graph $\Gamma$ in Figure \ref{fig: Multispinal groupoid graph} is strongly connected and each vertex has out-degree 4, so by Corollary \ref{Cor: CongruenceFreeAppliedS_G with nonempty graph}, the inverse semigroup $S_{(G, \Gamma)}$ is congruence-free. It follows from Proposition \ref{Prop: synchronized iff minimal when strongly connected} that the minimal vertices of $\Delta$ coincide with the synchronized vertices of $\Delta$. Observe that every vertex is in the minimal component of $\Delta$ except for $\{\id_x\}$ and $\{\id_y\}$, as indicated by the vertex coloring.

    Using Figure \ref{Fig: MS H graph}, one may check that $H_{jjj}=\{\id_x,b_x, c_x, d_x\}$, $H_n=\{\id_y, b_y\}$, and $H_{nn}=\{\id_y, b_y, c_y, d_y\}$ are the only non-trivial recurring subgroups. The nontrivial intersections of $H_{jjj}$ give rise to the following relevant kernels:
    \begin{align*}
        \ker \pi_{jjj, eg} &= \{k_{1}\id_x+ k_{b}b_x+k_cc_x+k_dd_x: k_{1}+ k_{b}+k_c+k_d=0\}, \\
        \ker \pi_{jjj, i} &= \{k_{1}\id_x+ k_{b}b_x+k_cc_x+k_dd_x: k_{1}+k_d=0= k_{b}+k_c\}, \\
        \ker \pi_{jjj, ji} &= \{k_{1}\id_x+ k_{b}b_x+k_cc_x+k_dd_x: k_{1}+k_c=0= k_{b}+k_d\}, \\
        \ker \pi_{jjj, jji} &= \{k_{1}\id_x+ k_{b}b_x+k_cc_x+k_dd_x: k_{1}+k_b=0= k_{c}+k_d\}.
    \end{align*}
    It follows from \ref{Prop: a in essIdeal iff a satisfies synchronized verts} that when $K$ has characteristic 2, $KH_{jjj}\cap \SingularIdeal$ is spanned by $\id_x+b+c+d$, and for any other characteristic, $KH_{jjj}\cap \SingularIdeal=\{0\}$.

    The nontrivial intersections of $H_{nn}$ give rise to the following kernels:
    \begin{align*}
        \ker \pi_{nn, ge} &= \{k_{1}\id_y+ k_{b}b_y+k_cc_y+k_dd_y: k_{1}+k_c+ k_{b}+k_d=0\}, \\
        \ker \pi_{nn, m} &= \{k_{1}\id_y+ k_{b}b_y+k_cc_y+k_dd: k_{1}+ k_{d}=0=k_b+k_c=0\}, \\
        \ker \pi_{nn, nm} &= \{k_{1}\id_y+ k_{b}b_y+k_cc_y+k_dd_y: k_{1}+k_c=0= k_{b}+k_d\}.
    \end{align*}
    It follows from \ref{Prop: a in essIdeal iff a satisfies synchronized verts} that the span of $\id_{y}+b_y-c_y-d_y$ is contained in $KH_{nn}\cap\SingularIdeal$ regardless of the characteristic of $K$.

    We conclude that regardless of the characteristic of $K$, $\faktor{KS_{(G, \Gamma)}}{\TightIdeal}$ is not simple by Corollary \ref{Cor: algebra quotient tight simple iff ker of recurring subgroup maps is 0}. Additionally, by Corollary \ref{Cor: cstar simple iff Steinberg simple}, the reduced $C^*$-algebra associated to $(G, \Gamma)$ is not simple.
    
    We remark that because the maximal subgroups of $(G, \Gamma)$ so closely resemble the Grigorchuk group or the Grigorchuk-Erschler group (indeed, the maximal subgroups are these two groups with extended actions), the analysis of the simplicity of the algebras associated to $(G, \Gamma)$ essentially combines the existing simplicity analysis of the Grigorchuk group in \cite[Figure 1]{gardella2025simplicitycalgebrascontractingselfsimilar} (see also \cite[Section 7]{SteinbergandNora2023}) and the simplicity analysis of the Grigorchuk-Erschler group in \cite[Figure 2]{gardella2025simplicitycalgebrascontractingselfsimilar}. However, there were choices made in how the actions of those two groups were extended and woven together to form $G$. Had different decisions been made, it would be possible to introduce recurring subgroups of $G$ which are not present in either the Grigorchuk group nor the Grigorchuk-Erschler group.
\end{Example}
\printbibliography
\end{document}